\newtheorem{Thm}{Theorem}[section]
\newtheorem{Lem}[Thm]{Lemma}
\newtheorem{Prop}[Thm]{Proposition}
\theoremstyle{definition}
\newtheorem{Rem}[Thm]{Remark}
\begin{document}

\begin{abstract}
We study the maximum likelihood estimator of the location parameter of the Pearson Type VII distribution with known scale. 
We rigorously establish precise asymptotic properties such as strong consistency, asymptotic normality, Bahadur efficiency and asymptotic variance  of the maximum likelihood estimator. 
Our focus is  the heavy-tailed case, including the Cauchy distribution. 
The main difficulty lies in the fact that the likelihood equation may have multiple roots; nevertheless, the maximum likelihood estimator performs well for large samples.
\end{abstract}

\title[]{Asymptotics of the maximum likelihood estimator of the location parameter of Pearson Type VII distribution}
\author{Kazuki Okamura}
\date{\today}
\address{Department of Mathematics, Faculty of Science, Shizuoka University, 836, Ohya, Suruga-ku, Shizuoka, 422-8529, JAPAN.}
\email{okamura.kazuki@shizuoka.ac.jp}
\keywords{Pearson Type VII distribution, Cauchy distribution, maximum likelihood estimator, strong consistency, asymptotic normality, asymptotic efficiency, Bahadur efficiency}
\subjclass[2020]{Primary 62F12}
\maketitle

\section{Introduction}\label{sec:intro}

The family of Pearson Type VII distributions provides flexible heavy-tailed models.  
The estimation of its parameters dates back at least to Fisher~\cite{Fisher1922}, over a century ago, and many researchers have studied it since then; see Johnson, Kotz, and Balakrishnan~\cite[Section~28]{JKB1995} for a thorough survey of results prior to 1994. 
This class is also known as the location--scale family of Student's $t$ distributions or of $q$-Gaussian distributions. 
For estimating the location, the median is a robust alternative to the arithmetic mean; however it is not asymptotically efficient.

In general, the maximum likelihood estimator is widely regarded as optimal in large samples under standard regularity. 
Lange, Little, and Taylor \cite{LLT1989} proposed a strategy based on maximum likelihood for a general model with errors following the $t$-distribution and applied it to many problems. 
Under suitable regularity conditions, properties such as strong consistency, asymptotic normality, and Bahadur efficiency have been established by many researchers. 
For location--scale families, it is natural to consider the estimation of the location with known scale. 
The standard approach is to solve the likelihood equation explicitly or numerically, which often has a unique root. 
For the Cauchy distribution with known scale, however, the likelihood equation may have multiple roots (see Reeds~\cite{Reeds1985} for precise analysis), and the same phenomenon occurs for the Pearson Type VII distribution. 
For this reason, alternative estimators of the Cauchy location parameter have been considered. 
For example, Freue~\cite{Freue2007} considered the Pitman estimator for small samples, and Zhang~\cite{Zhang2014} considered an empirical Bayes estimator. 
Nevertheless, this does not represent a failure of the maximum likelihood estimator itself. 
Indeed, Bai and Fu~\cite{BaiFu1987} established its Bahadur efficiency.

In this paper, we deal with not only the Cauchy distribution but also the Pearson Type VII distribution and our focus is  the maximum likelihood estimator. 
Some references on the maximum likelihood estimator of the Pearson Type VII distribution are Borwein and Gabor \cite{BorweinGabor1984}, Tiku and Suresh \cite{TikuSuresh1992}, and Vaughan \cite{Vaughan1992}. 
We provide mathematically rigorous proofs of strong consistency, asymptotic efficiency, and Bahadur efficiency for the maximum likelihood estimator. 
Our approach does not analyze the likelihood equation directly. 
We show that the asymptotic properties of the maximum likelihood estimator mirror those for the arithmetic mean of independent and identically distributed (i.i.d.) random variables with finite variance. 
Asymptotically, the maximum likelihood estimator for the Pearson Type VII distribution performs well.

Now  we state the framework and the main result. 
Let $m > 1/2$, which covers the heavy-tailed regime of primary interest. 
Let $\textup{PVII}_{m}(\mu, \sigma)$ be the Pearson Type VII distribution with location $\mu \in \mathbb{R}$ and scale $\sigma > 0$. 
Then  the probability density function of $\textup{PVII}_{m}(\mu, \sigma)$ is given by 
\[ f(x) = c_{m} \frac{1}{\sigma} \left( 1 + \left(\frac{x-\mu}{\sigma}\right)^2 \right)^{-m},  \]
where $c_{m}$ is the normalizing constant, specifically, $\displaystyle c_{m} \coloneqq \left(\int_{\mathbb R} (1+x^2)^{-m} dx\right)^{-1}$. 
The case $m=1$ corresponds to the Cauchy distribution. 

We consider the maximum likelihood estimator of the location parameter of the Pearson Type VII distribution with known scale. 
We can assume that $\sigma = 1$. 
Let $(X_n)_{n \ge 1}$ be i.i.d. random variables on a complete probability space $(\Omega, \mathcal{F}, P)$ following $\textup{PVII}_{m}(\theta,1)$. 
Let $\hat{\theta}_n$ be the maximum likelihood estimator of the location parameter from a sample $(X_1, \dots, X_n)$ of size $n$. 
Let $\hat{\theta}_n (x_1, \dots, x_n)$ be a measurable function on $\mathbb{R}^n$ which maximizes the function $\displaystyle \theta \mapsto \prod_{i=1}^{n} f(x_i-\theta)$. 
Such a function exists by virtue of the measurable selection theorem. 
Let $\hat{\theta}_n \coloneqq \hat{\theta}_n (X_1, \dots, X_n)$. 

Our first main result is strong consistency. 

\begin{Thm}[Strong consistency]\label{thm:SLLN}
\[ \lim_{n \to \infty} \hat{\theta}_n = \theta,   \textup{ $P$-a.s.}   \]
\end{Thm}

We show this using the concept of the Fr\'echet mean. 

Once the strong consistency is given, it is natural to consider the asymptotic normality. 
We denote the normal distribution with mean $\mu$ and variance $\sigma^2$ by $N(\mu, \sigma^2)$. 

\begin{Thm}[Asymptotic normality]\label{thm:CLT}
$\left(\sqrt{n} (\hat{\theta}_n - \theta) \right)_n$ converges to \\
$N\left(0,\dfrac{m + 1}{m (2m -1)} \right)$ in distribution as $n \to \infty$. 
\end{Thm}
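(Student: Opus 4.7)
The plan is to carry out the classical Cram\'er-style argument for asymptotic normality of the MLE, using Theorem~\ref{thm:strong-consistency} to bypass the difficulty that the likelihood equation can have multiple roots. Let $\ell(x,\theta):=\log f(x-\theta)=\log c_m - m\log(1+(x-\theta)^2)$ and set
\[ s(x,\theta) := \partial_\theta \ell(x,\theta) = \frac{2m(x-\theta)}{1+(x-\theta)^2}, \qquad s'(x,\theta) := \partial_\theta s(x,\theta) = \frac{2m\left((x-\theta)^2-1\right)}{\left(1+(x-\theta)^2\right)^2}. \]
Both functions are bounded in $(x,\theta)$ and smooth in $\theta$. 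Under $\textup{PVII}_m(\theta,1)$ the distribution of $X_1-\theta$ is symmetric, so $E[s(X_1,\theta)]=0$, and a direct computation with Beta integrals yields the Fisher information
\[ I := E\!\left[s(X_1,\theta)^2\right] = -E\!\left[s'(X_1,\theta)\right] = \frac{m(2m-1)}{m+1}. \]

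Since $\prod_i f(X_i-\theta)\to 0$ as $|\theta|\to\infty$, the maximizer $\hat\theta_n$ is an interior critical point, hence $\sum_{i=1}^n s(X_i,\hat\theta_n)=0$. Taylor-expanding in the second argument, there exists $\tilde\theta_n$ between $\theta$ and $\hat\theta_n$ with
\[ \sqrt{n}\,(\hat\theta_n-\theta) \;=\; -\frac{\,n^{-1/2}\sum_{i=1}^n s(X_i,\theta)\,}{\,n^{-1}\sum_{i=1}^n s'(X_i,\tilde\theta_n)\,}. \]
The numerator converges in law to $N(0,I)$ by the usual CLT applied to the i.i.d., centered, square-integrable summands $s(X_i,\theta)$. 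For the denominator I would invoke a uniform law of large numbers: since $|s'(x,\theta)|\le 2m$ and $\theta\mapsto s'(x,\theta)$ is Lipschitz uniformly in $x$, one obtains $\sup_{|\theta'-\theta|\le\delta}\bigl|n^{-1}\sum_i s'(X_i,\theta') - E[s'(X_1,\theta')]\bigr|\to 0$ almost surely; combined with continuity of $\theta'\mapsto E[s'(X_1,\theta')]$ and with $\tilde\theta_n\to\theta$ a.s.\ (from Theorem~\ref{thm:strong-consistency}), this forces $n^{-1}\sum_i s'(X_i,\tilde\theta_n)\to -I$ a.s. Slutsky's theorem then delivers convergence in law to $N(0,1/I)=N\bigl(0,(m+1)/(m(2m-1))\bigr)$.

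The main obstacle I anticipate is not any single computational step but rather the non-uniqueness of the root of the score equation: one cannot simply single out $\hat\theta_n$ as ``the'' consistent root. Theorem~\ref{thm:strong-consistency} is precisely what resolves this, because it forces the global maximizer into any prescribed neighborhood of $\theta$ for $n$ large, legitimizing the Taylor expansion and the control of the random intermediate point $\tilde\theta_n$. Everything remaining is routine: the Beta-integral computation of $I$, and the uniform LLN for the bounded and globally Lipschitz function $s'$.
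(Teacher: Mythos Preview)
Your argument is correct. It is the classical Cram\'er route: use Theorem~\ref{thm:strong-consistency} to force the global maximizer (hence the Taylor intermediate point $\tilde\theta_n$) into a fixed neighborhood of $\theta$, Taylor-expand the score equation there, and conclude with CLT/LLN/Slutsky. The uniform Lipschitz property of $s'$ in $\theta$ even lets you bypass the full uniform LLN: $\bigl|n^{-1}\sum_i s'(X_i,\tilde\theta_n)-n^{-1}\sum_i s'(X_i,\theta)\bigr|\le C\,|\tilde\theta_n-\theta|\to 0$ a.s., and the ordinary LLN handles the rest.

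The paper takes a genuinely different path, following Barczy--P\'ales. Rather than expanding at the random point $\hat\theta_n$, it first proves that on an event $\mathcal A_n$ with $P(\mathcal A_n)\to 1$ the function $t\mapsto D_n(t)=(2m)^{-1}n^{-1}\sum_i s(X_i,t)$ is strictly decreasing on a fixed small interval and has a unique zero there, which coincides with $\hat\theta_n$. This yields the exact equivalence $\{\sqrt n\,\hat\theta_n<y\}\cap\mathcal A_n=\{D_n(y/\sqrt n)<0\}\cap\mathcal A_n$, so the problem reduces to the limit of $P\bigl(D_n(y/\sqrt n)<0\bigr)$, and the Taylor expansion is carried out at the \emph{deterministic} argument $y/\sqrt n$. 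Your approach is shorter once strong consistency is available; the paper's approach invests more in the local structure of $D_n$ (uniform convergence of $D_n$ and $D_n'$, local monotonicity, uniqueness of the nearby root), but that investment is reused verbatim in the proofs of the law of the iterated logarithm and of Bahadur efficiency, where the same reduction to events of the form $\{D_n(t)\lessgtr 0\}$ is the key step.
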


By Remark \ref{rem:FI} below, 
$I(\theta) = \dfrac{m (2m -1)}{m + 1}$, where $I(\theta)$ is the Fisher information for a single observation.

We proceed to the law of the iterated logarithm. 
It has connections with statistics, in particular with sequential testing. 
See \cite{Robbins1970,BoosSerfling1980,HeWang1995}. 

\begin{Thm}[Law of the iterated logarithm]\label{thm:LIL}
\[ \limsup_{n \to \infty} \sqrt{\frac{n}{\log \log n}} (\hat{\theta}_n - \theta)  = \sqrt{\frac{2(m + 1)}{m (2m -1)}},   \textup{ $P$-a.s.}   \]
\end{Thm}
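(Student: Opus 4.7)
The plan is to derive a Bahadur-type representation
\[
\hat\theta_n - \theta \;=\; \frac{1}{nI(\theta)}\sum_{i=1}^{n}\psi(X_i-\theta) \;+\; o\!\left(\sqrt{\tfrac{\log\log n}{n}}\right),\qquad P\text{-a.s.},
\]
where $\psi(y):=2my/(1+y^2)$ is the score, and then read off the LIL by applying the classical Hartman--Wintner theorem to the i.i.d.\ sum $\sum_i\psi(X_i-\theta)$, which has mean zero and variance $I(\theta)=m(2m-1)/(m+1)$. Because the log-likelihood $\theta\mapsto\sum_i\log f(X_i-\theta)$ is smooth on $\mathbb R$ and tends to $-\infty$ as $|\theta|\to\infty$, the MLE is an interior global maximizer, hence satisfies the score equation $\sum_i\psi(X_i-\hat\theta_n)=0$; this is the entry point for the Taylor expansion.

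Writing $S_n:=\sum_i\psi(X_i-\theta)$ and $A_n:=\sum_i\psi'(X_i-\theta)$, a second-order Taylor expansion of $\psi$ about $X_i-\theta$ in the score equation gives
\[
\bigl|S_n - (\hat\theta_n-\theta)A_n\bigr| \;\le\; \tfrac12\, n\,\|\psi''\|_{\infty}\,(\hat\theta_n-\theta)^2,
\]
where $\|\psi''\|_{\infty}=\sup_{y\in\mathbb R}|\psi''(y)|<\infty$ by direct computation from $\psi(y)=2my/(1+y^2)$. The SLLN yields $A_n/n\to I(\theta)>0$ a.s., and the classical LIL yields $|S_n|=O(\sqrt{n\log\log n})$ a.s. Combining these with Theorem \ref{thm:strong-consistency} ($\hat\theta_n\to\theta$ a.s.), the coefficient $\|\psi''\|_{\infty}\,n|\hat\theta_n-\theta|/A_n$ tends to $0$ a.s., so solving the displayed inequality produces $|\hat\theta_n-\theta|\le 2|S_n|/A_n$ eventually in $n$. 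This already delivers the sharp a.s.\ rate $|\hat\theta_n-\theta|=O(\sqrt{\log\log n/n})$, upgrading strong consistency.

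Feeding this rate back into the Taylor inequality renders the quadratic remainder $O(\log\log n/n)=o(\sqrt{\log\log n/n})$, which yields the Bahadur representation. Multiplying by $\sqrt{n/\log\log n}$ and using $n/A_n\to 1/I(\theta)$ a.s.\ together with the Hartman--Wintner LIL for $S_n$ then gives
\[
\limsup_{n\to\infty}\sqrt{\tfrac{n}{\log\log n}}\,(\hat\theta_n-\theta) \;=\; \frac{\sqrt{2I(\theta)}}{I(\theta)} \;=\; \sqrt{\tfrac{2(m+1)}{m(2m-1)}}.
\]
The main obstacle is the bootstrap step: Theorem \ref{thm:CLT} only supplies a distributional $O_P(n^{-1/2})$ bound on $\hat\theta_n-\theta$, whereas the LIL demands a pointwise almost-sure rate. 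The step goes through cleanly because $\psi''$ is globally bounded for Pearson Type VII, removing any need for empirical-process control on the fluctuations of $\psi''$ near $\theta$.
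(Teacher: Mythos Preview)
Your proof is correct and takes a genuinely different route from the paper's.

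The paper follows Barczy--P\'ales: on an almost-surely eventual event $\mathcal{A}_n$ (built in Section~\ref{sec:CLT} from Propositions~\ref{prop:non-cpt-stable} and~\ref{prop:non-cpt-unique-stable} together with Lemmas~\ref{lem:lv2-Azuma} and~\ref{lem:lv3-Azuma}) one has the local equivalence $\hat\theta_n < t \iff D_n(t)<0$. The LIL is then obtained by testing this equivalence at the thresholds $t=(\sigma_m\pm\epsilon)\phi(n)/n$ and decomposing $\sum_i D(X_i,t)$ as $R_n+\gamma S_n+T_n^{(\gamma)}$; the Kolmogorov LIL controls $R_n$, the SLLN controls $S_n$, and the uniform second-derivative bound \eqref{eq:non-as-estimate} kills $T_n^{(\gamma)}$. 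No Bahadur representation is ever written down.

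You instead exploit directly that $\hat\theta_n$ is a root of the score equation, Taylor-expand $\psi$ with globally bounded $\psi''$, and bootstrap: strong consistency plus $A_n/n\to I(\theta)$ forces the quadratic coefficient to vanish, giving the a.s.\ rate $|\hat\theta_n-\theta|=O(\sqrt{\log\log n/n})$; a second pass yields the Bahadur representation with remainder $O((\log\log n)/n)$. Since $n/A_n\to 1/I(\theta)$ and $S_n/\sqrt{n\log\log n}$ is a.s.\ bounded by the Hartman--Wintner LIL, the product rule for $\limsup$ (convergent factor times bounded factor) gives the constant. Your argument uses only Theorem~\ref{thm:strong-consistency}, the SLLN, and the classical LIL, and avoids the construction of the events $\mathcal{A}_n$ entirely; moreover, the Bahadur representation you obtain is itself a stronger intermediate statement than anything in Section~\ref{sec:LIL}. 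The paper's route, by contrast, recycles the Section~\ref{sec:CLT} machinery and treats the CLT and LIL in a uniform ``sandwich'' style without ever isolating $\hat\theta_n-\theta$ explicitly.
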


For the proof, we use the technique of the deviation mean of i.i.d. random variables  investigated by Barczy and P\'ales \cite{BarczyPales2023} with some modifications.  

The following extends the result of Bai and Fu  \cite{BaiFu1987}, who considered the Cauchy distribution, to the Pearson Type VII distribution. 

\begin{Thm}[Bahadur efficiency and moderate deviation]\label{thm:BH} 
\!

\noindent(i) 
\begin{equation}\label{eq:BH-upper}
\limsup_{\epsilon \to +0} \frac{1}{\epsilon^2} \left( \limsup_{n \to \infty} \frac{\log P\left(\left|\hat{\theta}_n - \theta\right| > \epsilon\right)}{n} \right) \le -\frac{m (2m-1)}{2(m + 1)}.  
\end{equation}
\begin{equation}\label{eq:BH-lower}
\liminf_{\epsilon \to +0} \frac{1}{\epsilon^2} \left( \liminf_{n \to \infty} \frac{\log P\left(\left|\hat{\theta}_n - \theta\right| > \epsilon\right)}{n} \right) \ge -\frac{m (2m-1)}{2(m + 1)}.  
\end{equation}
(ii) For every sequence $(a_n)_n$ of positive numbers satisfying $\displaystyle \lim_{n \to \infty} a_n = \infty$ and $\displaystyle \lim_{n \to \infty} a_n / n^{1/2} = 0$ and every $\epsilon > 0$, 
\[ \lim_{n \to \infty} \frac{\log P\left(\left|\hat{\theta}_n - \theta\right| > \epsilon/a_n \right)}{n/a_n^2} = -\frac{m (2m-1)}{2(m + 1)}\epsilon^2.   \] 
\end{Thm}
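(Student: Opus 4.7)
The plan is to reduce both parts to classical large- and moderate-deviation results for the bounded i.i.d.\ score sum $S_n := \sum_{i=1}^n \psi(X_i - \theta)$, where $\psi(u) := 2mu/(1+u^2)$ is the location score. Since $|\psi|\le m$, $E[\psi(X_1-\theta)] = 0$, and $\textup{Var}(\psi(X_1-\theta)) = I := I(\theta) = m(2m-1)/(m+1)$ (cf.\ Remark~\ref{rem:FI}), Cram\'er's theorem supplies a good rate function $\Lambda^*$ for $S_n/n$ with $\Lambda^*(t) = t^2/(2I) + O(t^3)$ as $t\to 0$, and the classical moderate-deviation principle for bounded i.i.d.\ sums gives
\[
\lim_{n\to\infty} \frac{\lambda_n^2}{n}\log P\!\left(\frac{|S_n|}{n} > \frac{\tau}{\lambda_n}\right) = -\frac{\tau^2}{2I}
\]
for every $\tau>0$ whenever $\lambda_n\to\infty$ and $\lambda_n/\sqrt n \to 0$. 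The theorem then follows once $\hat\theta_n - \theta$ is matched with $S_n/(nI)$ on the relevant deviation scale.

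The first step is to upgrade Theorem~\ref{thm:strong-consistency} to an exponential rate: for every $\delta>0$,
\[
\limsup_{n\to\infty} \tfrac{1}{n}\log P(|\hat\theta_n - \theta|>\delta)<0.
\]
Writing $H_n(\alpha) := n^{-1}\sum_i \log(1+(X_i-\alpha)^2)$ (minimized at $\hat\theta_n$) and $\bar H(\alpha) := E[\log(1+(X_1-\alpha)^2)]$, the centered contrast $g_\alpha(x) := \log\frac{1+(x-\alpha)^2}{1+(x-\theta)^2}$ is uniformly bounded in $x$ (its extrema satisfy $u(u+\theta-\alpha) = 1$ with $u = x-\alpha$, giving $\|g_\alpha\|_\infty \le 2\log(1+|\alpha-\theta|)+C$) and $1$-Lipschitz in $\alpha$ since $|\partial_\alpha g_\alpha(x)| = |\psi(x-\alpha)|/m \le 1$; moreover $\bar H$ has a unique strict minimum at $\theta$ with $\bar H''(\theta) = I/m$, and is coercive (indeed $\bar H(\alpha)\sim 2\log|\alpha|$ as $|\alpha|\to\infty$, using $m>1/2$). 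Hoeffding's inequality together with a $\delta$-net then yields a uniform exponential LLN for $H_n - \bar H$ on any compact set; an a priori bound $P(|\hat\theta_n - \theta|>M_0)\le e^{-cn}$ follows from observing that for $|\alpha-\theta|>M$ and $|X_i-\theta|\le M/2$ one has $|X_i-\alpha|>M/2$, so $H_n(\alpha)\ge \log(1+M^2/4)\cdot(\#\{i:|X_i-\theta|\le M/2\}/n)$, which, by Hoeffding for the indicator sum, exceeds $H_n(\theta)$ for $M$ large enough (using Cram\'er to control $H_n(\theta)$, noting that $\log(1+U^2)$ has finite MGF on $(-\infty, m-1/2)$ for $U\sim \textup{PVII}_m(0,1)$).

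On the exponentially likely event $\{|\hat\theta_n - \theta|\le\eta\}$ (small $\eta>0$), Taylor-expanding the first-order condition $H_n'(\hat\theta_n)=0$---legitimate because $\partial_\alpha^k \log(1+(x-\alpha)^2)$ is bounded in $x$ uniformly in $\alpha$ for $k=1,2,3$---and writing $\Delta_n := H_n''(\theta) - I/m$ (a Hoeffding-exponentially small centered sum) yields
\[
\hat\theta_n - \theta = \frac{S_n}{nI} + \tilde R_n,\qquad |\tilde R_n|\le C\!\left(|\Delta_n|\cdot\frac{|S_n|}{n} + (\hat\theta_n - \theta)^2\right).
\]
For (ii): on $\{|\hat\theta_n - \theta|\le\eta\}\cap\{|\Delta_n|\le\eta\}$ the event $\{|\hat\theta_n - \theta|>\epsilon/\lambda_n\}$ forces $|S_n|/(nI)>(1-O(\eta))\epsilon/\lambda_n$; the MDP for $S_n$ at $\tau = (1-O(\eta))I\epsilon$ followed by $\eta\to 0^+$ gives the upper bound, and the reverse inclusion combined with the MDP lower bound yields the matching lower bound (the complementary error events are super-exponentially small on the scale $n/\lambda_n^2$). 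Part (i) is obtained identically, replacing the MDP with Cram\'er's theorem for $S_n$ and using $\Lambda^*(I\epsilon)/\epsilon^2 \to I/2$ as $\epsilon\to 0^+$ in both directions.

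The main obstacle is expected to be the exponential consistency: non-convexity of $H_n$---the very source of the multiple-root phenomenon emphasized in the introduction---prevents a direct localization of the minimizer via the score equation, so one must combine uniform exponential empirical-process control on a compact window with an a priori exponential tail bound on the minimizer. Once the minimizer is confined to a small neighborhood of $\theta$ with exponentially high probability, the Taylor expansion reduces the remaining analysis to the well-understood large/moderate deviation theory of the bounded i.i.d.\ sum $S_n$.
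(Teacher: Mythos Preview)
Your proposal is correct and follows the same overall architecture as the paper's proof (which in turn follows Bai--Fu): both first upgrade consistency to an exponential rate by combining Hoeffding/Cram\'er control of the empirical contrast $H_n$ on compacta with an a priori tail bound coming from the coercivity of $\bar H$ and the finiteness of $E[(1+X_1^2)^{\lambda}]$ for $\lambda<m-\tfrac12$, and both then reduce $P(|\hat\theta_n-\theta|>\epsilon)$ to a large/moderate deviation probability for a bounded i.i.d.\ sum. The one structural difference is in that reduction step. You Taylor-expand the score equation about $\theta$ to get $\hat\theta_n-\theta=S_n/(nI)+\tilde R_n$ and sandwich $\{|\hat\theta_n-\theta|>\epsilon\}$ between $\{|S_n|>(1\mp O(\eta))nI\epsilon\}$, applying Cram\'er's theorem / the MDP to the \emph{single} sum $S_n$ and sending $\eta\to 0$ at the end. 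The paper instead uses that on the good event the score average $D_n(t)=-\tfrac12 H_n'(t)$ is strictly decreasing near $\theta$, so that $\{\hat\theta_n>\epsilon\}\cap\mathcal A_n=\{D_n(\epsilon)>0\}\cap\mathcal A_n$ \emph{exactly}, and then applies Petrov's one-step exponential inequalities directly to the $\epsilon$-dependent sum $\sum_i D(X_i,\epsilon)$, expanding its mean $G_m(\epsilon)$ and variance $H_m(\epsilon)$ in $\epsilon$ afterwards. The paper's route avoids the auxiliary $\eta$ and the implicit inversion needed to pass from $|\tilde R_n|\le C(|\Delta_n||S_n|/n+(\hat\theta_n-\theta)^2)$ to a clean sandwich; your route keeps the summand distribution fixed, so the Cram\'er rate function $\Lambda^*$ and the identity $\Lambda^{*}(t)=t^2/(2I)+O(t^3)$ make the role of the Fisher information more transparent.
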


This assertion implies Theorem \ref{thm:SLLN} and its proof does not depend on Theorem \ref{thm:SLLN}. 
However we can show Theorem \ref{thm:SLLN} much more easily than the proof of this assertion. 
For the proof, we follow the strategy of   \cite{BaiFu1987}.

It is worth investigating the probability that the estimator deviates significantly from the true value. 
In this paper, we let $\mathbb{N} \coloneqq \{1,2, \dots\}$. 

\begin{Thm}[Integrability]\label{thm:integrability}
There exist positive constants $r_{m}$ and  $N_{m} \in \mathbb{N}$ depending only on $m$ such that 
for every $r \ge r_{m}$ and every $n \ge N_{m}$, 
\[ P\left( \left|\hat{\theta}_n -\theta\right| > r \right) \le r^{-c_{m}^{\prime} n}, \]
where $c_m^{\prime} \coloneqq \lambda_{m}^{\prime} - \dfrac{(\lambda_{m}^{\prime})^2}{4} $ and $\lambda_m^{\prime} \coloneqq \min\left\{1, \dfrac{2m-1}{4} \right\}$. 
In particular, $\hat{\theta}_n \in L^{c_{m}^{\prime} n -1} (\Omega, \mathcal{F}, P)$ for $n \ge N_{m}$.  
\end{Thm}

We show this by modifying several estimates in the proof of Theorem \ref{thm:BH}. 

The Cram\'er-Rao inequality states that  for each $n \ge 1$, 
\[ n E\left[ \left(\hat{\theta}_n - \theta\right)^2\right] \ge \frac{1}{I(\theta)}.  \]
By this and Theorem \ref{thm:CLT}, 
it is natural to consider the large-sample asymptotics of $n E\left[ \left(\hat{\theta}_n - \theta\right)^2\right]$. 

\begin{Thm}[Variance asymptotics]\label{thm:AE}
\[ \lim_{n \to \infty} n E\left[ \left(\hat{\theta}_n - \theta\right)^2\right] = \frac{m + 1}{m (2m -1)}.  \]
\end{Thm}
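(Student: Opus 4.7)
The plan is to deduce this from Theorem \ref{thm:CLT} via a uniform integrability argument. Set $Y_n := n(\hat{\theta}_n - \theta)^2$. By Theorem \ref{thm:CLT}, $Y_n \xrightarrow{d} Z^2$ where $Z \sim N\!\left(0, \frac{m+1}{m(2m-1)}\right)$, so $E[Z^2] = \frac{m+1}{m(2m-1)}$ is precisely the claimed limit. Once I establish that $(Y_n)_n$ is uniformly integrable, the convergence $E[Y_n] \to E[Z^2]$ follows immediately; I will get uniform integrability from the stronger assertion that $\sup_{n \ge N_m} E[Y_n^{1+\delta}] < \infty$ for some $\delta > 0$. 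Note that the Cram\'er--Rao bound already supplies $\liminf_n E[Y_n] \ge \frac{m+1}{m(2m-1)}$, so the content is really the $\limsup$ direction.

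By the layer-cake formula and the substitution $t = \sqrt{s/n}$,
\begin{equation*}
E[Y_n^{1+\delta}] = (2+2\delta)\, n^{1+\delta} \int_0^\infty t^{1+2\delta}\, P\!\left(|\hat{\theta}_n - \theta| > t\right) dt.
\end{equation*}
I would split this integral at $t = r_m$, the threshold furnished by Theorem \ref{thm:integrability}. On $[r_m, \infty)$, that theorem yields $P(|\hat{\theta}_n - \theta| > t) \le t^{-c_m n}$ for $n \ge N_m$, so the corresponding contribution is at most $C n^{1+\delta} r_m^{2+2\delta - c_m n}$, which is bounded in $n$ (indeed it vanishes as $n \to \infty$) once $c_m n > 2+2\delta$. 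On the short-to-moderate range $t \in (0, r_m]$ I would use a uniform sub-Gaussian bound
\begin{equation*}
P\!\left(|\hat{\theta}_n - \theta| > t\right) \le C_1 \exp\!\left(-c_1 n t^2\right), \qquad n \ge N_m,\ t \in (0, r_m],
\end{equation*}
for some constants $C_1, c_1 > 0$. Granting this, the change of variables $u = \sqrt{n}\,t$ bounds the corresponding piece of the integral by $C_1 n^{-1-\delta} \int_0^{\infty} u^{1+2\delta} e^{-c_1 u^2} du$; after multiplication by $n^{1+\delta}$ this is a finite constant independent of $n$, and $\sup_n E[Y_n^{1+\delta}] < \infty$ follows.

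The main obstacle is the uniform sub-Gaussian inequality on $(0, r_m]$. Theorem \ref{thm:BH}(i) is stated only as a double asymptotic in $\epsilon$ and $n$ and does not, as written, supply a non-asymptotic bound with constants uniform in both variables. I would therefore revisit the proof of Theorem \ref{thm:BH} --- which, following the strategy of Bai and Fu \cite{BaiFu1987}, proceeds through Chernoff-type estimates on exponential moments of score-type functionals of the Pearson Type VII log-likelihood at and near the true parameter --- and track the constants explicitly so as to extract an inequality of the above form, valid for all $t$ in a fixed small neighbourhood of the origin and all $n \ge N_m$. The localization lemma already used to control the non-uniqueness of roots of the likelihood equation should fit into this estimate with minor modifications. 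With this uniform bound established, the splitting argument above yields the desired uniform $L^{1+\delta}$-boundedness of $Y_n$, and the theorem follows.
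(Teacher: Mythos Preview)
Your plan is correct and closely parallels the paper's argument; both combine the CLT with explicit tail bounds drawn from the proofs of Theorems~\ref{thm:BH} and~\ref{thm:integrability} to upgrade distributional convergence to convergence of second moments. The differences are packaging. You pass through uniform integrability via an $L^{1+\delta}$ bound and split the layer-cake integral into two pieces at $t=r_m$; the paper instead works directly with the truncation $\phi_M(x)=x^2\wedge M^2$ and splits into three pieces, $[M/\sqrt{n},\epsilon_{m,3}]\cup[\epsilon_{m,3},r_m+1]\cup[r_m+1,\infty)$. The extra cut at a small $\epsilon_{m,3}$ is convenient: on $(0,\epsilon_{m,3}]$ the paper extracts precisely the non-asymptotic sub-Gaussian bound you anticipate, namely $P(\hat\theta_n>s)\le \exp\bigl(-\tfrac{m(2m-1)}{8(m+1)}ns^2\bigr)+c\,e^{-c_{m,10}n}$, directly from Petrov's inequality applied to $D_n(s)$ together with the exponential bound on $P(\mathcal{A}_n^c)$ (this is exactly your ``revisit the proof of Theorem~\ref{thm:BH}'' step), while on the bounded intermediate range $[\epsilon_{m,3},r_m+1]$ it uses only the fixed exponential decay $P(\hat\theta_n>\epsilon_{m,3})\le Ce^{-cn}$. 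Your two-region split would force you to manufacture a sub-Gaussian bound uniformly over all of $(0,r_m]$; this can be done by choosing $c_1$ small enough to absorb both the additive $e^{-c_{m,10}n}$ term and the intermediate range, but the three-way split avoids that consolidation. One aside: the Cram\'er--Rao inequality you invoke for the $\liminf$ presupposes unbiasedness of $\hat\theta_n$, which has not been checked; the paper gets the $\liminf$ instead by Fatou via $\phi_M$, and in any case your uniform-integrability argument already delivers both inequalities.
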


This is consistent with  \cite[(28.61c)]{JKB1995}. 
We give a mathematically rigorous proof of it. 
The proof is technically involved and we use Theorem \ref{thm:CLT} and Theorem \ref{thm:integrability}, and an estimate obtained in the proof of Theorem \ref{thm:BH}. 

In the following sections, we present proofs of these assertions. 
In the final section, we give simulation studies. 

In the proofs of these results, we can assume that $\theta = 0$ without loss of generality. 
The parameter $m$ remains fixed throughout. 
Many constants will appear.
When a constant depends {\it only} on $m$, we indicate this by including $m$ as a subscript; otherwise we omit it even if it depends on $m$.

\section{Proof of Theorem \ref{thm:SLLN}}\label{sec:SLLN}

We first give an outline of the proof. 
We prove Theorem \ref{thm:SLLN} by following the strategy of Bhattacharya and Bhattacharya \cite[Section 3.2]{Bhattacharya2012}. 
One of our goals is to establish \cite[Theorem 3.3]{Bhattacharya2012} in the case where  the loss function is replaced\footnote{In \cite[Theorem 3.3]{Bhattacharya2012}, the loss function is given by the map $u \mapsto u^{\alpha}$ for $\alpha \in (0,1)$.} with the map $u \mapsto \log(1+u^2)$. 

The key step is Proposition \ref{prop:non-cpt-stable}, which shows that the Fr\'echet mean set $C_{Q_n (\omega)}$ (equivalently, the argmin set of $L_n$) is eventually contained in an arbitrarily small neighborhood of $0$. 
Proposition \ref{prop:non-cpt-stable} immediately yields Theorem \ref{thm:SLLN}. 
The proof of Proposition \ref{prop:non-cpt-stable} consists of four ingredients: 
uniform boundedness of minimizers (Lemma \ref{lem:bdd-minimizer-stable}), 
a uniform law of large numbers for $L_n$ on bounded intervals (Lemma  \ref{lem:as-cptunifconv-stable}), 
uniqueness of the population minimizer $C_{\nu_m} = \{0\}$ (Lemma \ref{lem:min-integral-stable}), 
and a stability lemma for minimizers of continuous functions with compact level sets (Lemma \ref{lem:min-noncpt-stable}).

Let 
\[ L_n (t) \coloneqq \frac{1}{n} \sum_{i=1}^{n} \log(1+ (X_i - t)^2), \ t \in \mathbb{R}.  \]
Let $\nu_m$ be the probability measure on the Lebesgue measurable space $(\mathbb{R}, \mathcal{L}(\mathbb{R}))$ of the Pearson Type VII distribution $\textup{PVII}_{m}(0,1)$, that is, 
\[ \nu_m (dx) \coloneqq c_{m} \left( 1 + x^2 \right)^{-m} dx. \]

\begin{Lem}\label{lem:metlower-am-stable}
There exists a positive constant $c_{m,1}$ depending only on $m$ such that $P$-a.s. $\omega$, 
there exists $N_1 (\omega) \in \mathbb{N}$ such that for every $n > N_1 (\omega)$ and every $t \in \mathbb{R}$ with $|t| \ge 2$, 
\[ L_n (t)(\omega) \ge \frac{c_{m,1}}{4} (\log(1+ t^{2}) - 2\log 2).  \]
\end{Lem}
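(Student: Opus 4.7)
The plan is to extract a uniform-in-$t$ linear lower bound on $L_n(t)$ by exploiting the observation that whenever $X_i$ stays in a fixed bounded window, every term $\log(1+(X_i-t)^2)$ grows like $\log(1+t^2)$ as $|t|\to\infty$. The strong law of large numbers applied to a suitable indicator will then guarantee that a positive fraction of the samples always contributes to this lower bound, which supplies the uniformity in $t$ for free.

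Concretely, we take the window $[-1,1]$ and set $p_m := \nu_m([-1,1])$, which is strictly positive since the Pearson Type VII density is positive on $\mathbb{R}$. The first step is a deterministic inequality: for $|t|\ge 2$ and $|X_i|\le 1$, the triangle inequality gives $|X_i-t|\ge |t|-1\ge |t|/2$ (the last bound using $|t|/2\ge 1$), so
\[
1+(X_i-t)^2 \;\ge\; 1+\frac{t^2}{4} \;\ge\; \frac{1+t^2}{4},
\]
and therefore $\log(1+(X_i-t)^2)\ge \log(1+t^2)-2\log 2$. Note that this lower bound is already nonnegative for $|t|\ge 2$, since $1+t^2\ge 5>4$.

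The second step is probabilistic. Every summand in $L_n(t)$ is nonnegative, so we may discard the terms with $|X_i|>1$ to obtain
\[
L_n(t)(\omega) \;\ge\; \frac{K_n(\omega)}{n}\bigl(\log(1+t^2)-2\log 2\bigr), \qquad K_n(\omega):=\#\{1\le i\le n : |X_i(\omega)|\le 1\}.
\]
By the strong law applied to the i.i.d.\ Bernoulli random variables $\mathbf{1}_{[-1,1]}(X_i)$, $K_n/n \to p_m$ $P$-almost surely, so there exists $N_1(\omega)\in\mathbb{N}$ with $K_n(\omega)/n\ge p_m/2$ for all $n>N_1(\omega)$. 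Setting $c_{m,1}:=2p_m$ then yields the claimed bound simultaneously for every $t$ with $|t|\ge 2$.

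The only thing one has to get right is the choice of window: it must have positive $\nu_m$-mass and be small enough that $|X_i|\le 1$ together with $|t|\ge 2$ forces $|X_i-t|$ to be comparable to $|t|$. Both requirements are met by the pair $[-1,1]$ and $|t|\ge 2$. Crucially, the event $\{|X_i|\le 1\}$ is $t$-independent, so the uniformity over $t$ demanded by the lemma falls out of a single application of the SLLN; no uniform law of large numbers or compactness-in-$t$ argument is required, which is why this lemma is more elementary than the full \cite[Theorem~3.3]{Bhattacharya2012}-style statement it echoes.
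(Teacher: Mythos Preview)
Your proof is correct and follows essentially the same route as the paper: restrict to the summands with $|X_i|\le 1$, bound each such term below by a constant multiple of $\log(1+t^2)-2\log 2$, and invoke the SLLN on the indicators $\mathbf{1}_{[-1,1]}(X_i)$ to secure a uniform-in-$t$ lower bound. The only difference is the deterministic pointwise inequality used: the paper appeals to $\log(1+x^2)+\log(1+y^2)\ge\frac{1}{2}\log(1+(x+y)^2)$, whereas you use the more direct triangle-inequality estimate $|X_i-t|\ge|t|/2$, which is arguably simpler and in fact yields a constant $c_{m,1}$ twice as large.
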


\begin{proof}
Applying the inequality 
\begin{equation*} 
\log(1+x^{2}) + \log(1+y^{2}) \ge \frac{1}{2} \log(1+(x+y)^{2}), \ x, y \in \mathbb{R}, 
\end{equation*}
to $(x,y) = (X_i (\omega) - t, X_i (\omega))$, 
we see that 
\[ L_n (t)(\omega) \ge \frac{1}{n} \sum_{i=1}^{n}\log(1+(X_i (\omega) -t)^{2}) {\bf 1}_{[-1,1]}(X_i (\omega)) \]
\[ \ge \left(\frac{1}{2} \log(1+ t^2)- \log 2 \right) \frac{1}{n} \sum_{i=1}^{n} {\bf 1}_{[-1,1]}(X_i (\omega)). \]

By the strong law of large numbers,  
\[ \lim_{n \to \infty}  \frac{1}{n} \sum_{i=1}^{n} {\bf 1}_{[-1,1]}(X_i (\omega)) = \nu_{m}([-1,1]) > 0, \ \textup{ $P$-a.s. $\omega$}.  \]
We have the assertion for $c_{m,1} \coloneqq \nu_{m}([-1,1])$. 
\end{proof}

Denote the empirical distribution of $(X_i (\omega))_{i=1}^{n}$ by $Q_n (\omega)$, specifically, 
\[  Q_n (\omega) \coloneqq \frac{1}{n} \sum_{i=1}^{n} \delta_{X_i (w)}. \]

For a probability measure $\nu$ on the Lebesgue measurable space $(\mathbb{R}, \mathcal{L}(\mathbb{R}))$, 
let the expected loss function be 
\begin{equation*}
F_{\nu}(t) \coloneqq \int_{\mathbb R} \log (1+(x-t)^2) d\nu(x), \ t \in \mathbb{R}, 
\end{equation*}
and the mean set be 
\[ C_{\nu} \coloneqq \left\{t \in \mathbb{R} \middle| \min_{s \in \mathbb R} F_{\nu}(s) = F_{\nu}(t) \right\}. \]
Since $\log(1+(x-t)^2) = O\left(|x|^{\alpha} \right), x \to \infty$, for every $\alpha > 0$, 
$F_{\nu_m}(t) < \infty$ for every $t \in \mathbb{R}$. 

For $\nu = Q_n (\omega)$, $C_{Q_n (\omega)}$ is called the Fr\'echet mean set. 
Recall that maximizing the likelihood is equivalent to minimizing the empirical negative log-likelihood $L_n$. 
For the empirical measure $Q_n$, the corresponding Fr\'echet function $F_{Q_n (\omega)} (t)$ equals $L_n (t)(\omega)$. 
Therefore, 
$\hat{\theta}_n (\omega) \in C_{Q_n (\omega)}$. 

Another goal of this section is to show that the mean set $C_{\nu_m}$ is a singleton, which will be established in Lemma \ref{lem:min-integral-stable} below. 
Let 
\begin{equation}\label{eq:def-F-alpha} 
F_{m} (t) \coloneqq F_{\nu_{m}} (t) = \int_{\mathbb R} \log\left(1+(x-t)^2\right) \nu_{m}(dx).
\end{equation} 
This is the expected loss function. 

\begin{Lem}[boundedness of minimizers]\label{lem:bdd-minimizer-stable}
There exists a positive constant $r_{m,1}$ depending only on $m$ such that $P$-a.s. $\omega$, 
there exists $N_2 (\omega) \in \mathbb{N}$ such that for every $n > N_2 (\omega)$, 
$C_{Q_n (\omega)} \subset [-r_{m,1}, r_{m,1}]$.  
\end{Lem}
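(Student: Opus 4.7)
The plan is to combine the lower bound from Lemma \ref{lem:metlower-am-stable}, which grows like $\log|t|$, with an almost-sure upper bound on $L_n(0)$, which converges to a finite constant. Any minimizer must then lie in a bounded set whose size depends only on $m$.

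First I would verify that $\log(1+x^2)$ is $\nu_m$-integrable under the standing assumption $m > 1/2$: near infinity the density $c_m(1+x^2)^{-m}$ decays like $|x|^{-2m}$ while $\log(1+x^2) \sim 2\log|x|$, so integrability holds exactly when $2m > 1$. Set
\[ M_m := \int_{\mathbb{R}} \log(1+x^2)\, \nu_m(dx) < \infty. \]
The strong law of large numbers applied to the i.i.d.\ sequence $(\log(1+X_i^2))_{i \ge 1}$ gives $L_n(0)(\omega) \to M_m$, $P$-a.s. Hence for $P$-a.s.\ $\omega$ there exists $N_2'(\omega) \in \mathbb{N}$ such that $L_n(0)(\omega) \le M_m + 1$ for every $n \ge N_2'(\omega)$.

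Next, for fixed $\omega$ the function $t \mapsto L_n(t)(\omega)$ is continuous and coercive (each summand $\log(1+(X_i(\omega)-t)^2)$ tends to $+\infty$ as $|t| \to \infty$), so the set $C_{Q_n(\omega)}$ of global minimizers is nonempty. For $P$-a.s.\ $\omega$ and $n > \max\{N_1(\omega), N_2'(\omega)\}$, any minimizer $t^\ast$ with $|t^\ast| \ge 2$ satisfies, by Lemma \ref{lem:metlower-am-stable} applied at $t^\ast$ combined with the upper bound at $0$,
\[ \frac{c_{m,1}}{4}\bigl(\log(1+(t^\ast)^2) - 2\log 2\bigr) \le L_n(t^\ast)(\omega) \le L_n(0)(\omega) \le M_m + 1. \]
Solving this inequality yields an explicit upper bound on $|t^\ast|$ depending only on $m$, and taking $r_{m,1}$ to be the maximum of $2$ and that bound completes the argument.

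The only delicate point is conceptual rather than technical: one must confirm that $C_{Q_n(\omega)}$ is interpreted as the set of global minimizers of $t \mapsto \int \log(1+(x-t)^2)\, Q_n(\omega)(dx) = L_n(t)(\omega)$, consistent with the Fr\'echet-mean framework of \cite{Bhattacharya2012}. Once that is fixed, the proof requires no further probabilistic input beyond the single additional application of the strong law already invoked in Lemma \ref{lem:metlower-am-stable}, so I do not anticipate any real obstacle.
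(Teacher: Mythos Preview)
Your proposal is correct and follows essentially the same approach as the paper: combine the lower bound on $L_n(t)$ from Lemma \ref{lem:metlower-am-stable} with the almost-sure convergence $L_n(0)\to F_m(0)$ (your $M_m$) from the strong law of large numbers, and solve the resulting inequality for $|t^\ast|$. The paper's proof is essentially identical, with only cosmetic differences in notation and without the explicit integrability and nonemptiness checks you include.
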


\begin{proof}
By Lemma \ref{lem:metlower-am-stable}, 
there exists an event $\Omega_1$  such that $P(\Omega_1) = 1$ and for every $\omega \in \Omega_1$, 
there exists $N_1 (\omega) \in \mathbb{N}$ such that for every $n > N_1 (\omega)$ and every $t \in \mathbb{R}$ with $|t| \ge 2$, 
\[ L_n (t) (\omega) \ge \frac{c_{m,1}}{4} (\log(1+ t^{2}) - 2\log 2).  \]

Assume that $\omega \in \Omega_1$, $t_n \in C_{Q_n (\omega)}$ and $|t_n| \ge 2$.  
Then, for every $n > N_1 (\omega)$, 
\[ L_n (0)(\omega) \ge  L_n (t_n) (\omega) \ge \frac{c_{m,1}}{4} (\log(1+ t_n^{2}) - 2\log 2)  \]

Recall \eqref{eq:def-F-alpha}. 
We remark that $F_{\nu_m}(0) < \infty$. 
By the strong law of large numbers, 
there exists an event $\Omega_2 \subset \Omega_1$ such that $P(\Omega_2) = 1$ and for every $\omega \in \Omega_2$, 
\[ \lim_{n \to \infty} L_n (0)(\omega) = F_{m}(0) < +\infty.  \]
In particular, 
there exists $N_2 (\omega) > N_1 (\omega)$ such that for every $n > N_2 (\omega)$, 
\[ L_n (0)(\omega)   \le 1 +  F_{m}(0).  \]
Hence, there exists a constant $r_{m,1} > 1$ such that for every $\omega \in \Omega_2$ and $n > N_2 (\omega)$, 
$|t_n| < r_{m,1}$. 
\end{proof}

\begin{Lem}[a.s. pointwise convergence]\label{lem:as-pointwise-stable}
$P$-a.s., it holds that for every $t \in \mathbb{R}$, 
\begin{equation}\label{eq:as-pointwise-stable} 
\lim_{n \to \infty} L_n (t) = F_{m}(t). 
\end{equation}
\end{Lem}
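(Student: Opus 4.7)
The plan is to establish the convergence first at each fixed $t$ by the classical strong law of large numbers, and then upgrade this pointwise-in-$t$ statement to a single full-measure event valid simultaneously for all $t$ by exploiting a uniform Lipschitz bound. Fixing $t \in \mathbb{R}$, the elementary inequality $1+(x-t)^2 \le (1+2t^2)(1+2x^2)$ yields
$$\log(1+(X_1-t)^2) \le \log(1+2t^2) + \log(1+2 X_1^2),$$
and since $\nu_m$ has a density proportional to $(1+x^2)^{-m}$ with $2m > 1$, the right-hand side is $P$-integrable. Kolmogorov's strong law therefore gives $L_n(t) \to F_m(t)$ on a $P$-almost sure event $\Omega_t$, which a priori depends on $t$.

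To remove the dependence on $t$, I would pass to the countable intersection $\Omega^* := \bigcap_{q \in \mathbb{Q}} \Omega_q$, which still has probability one and on which \eqref{eq:as-pointwise-stable} holds for every rational $t$. The key regularity input is the uniform Lipschitz estimate: for every $x \in \mathbb{R}$,
$$\left|\frac{d}{dt} \log(1+(x-t)^2)\right| = \frac{2|t-x|}{1+(x-t)^2} \le 1,$$
since $2|u| \le 1 + u^2$. Consequently each realization $L_n(\cdot)(\omega)$ and the limit $F_m(\cdot)$ are $1$-Lipschitz, uniformly in $n$ and $\omega$. For $\omega \in \Omega^*$, $t \in \mathbb{R}$ and $\epsilon > 0$, choosing $q \in \mathbb{Q}$ with $|t - q| < \epsilon/3$ and applying the triangle inequality
$$|L_n(t) - F_m(t)| \le |L_n(t) - L_n(q)| + |L_n(q) - F_m(q)| + |F_m(q) - F_m(t)|$$
will promote rational convergence to convergence at the arbitrary point $t$.

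The principal obstacle is precisely this quantifier exchange between the SLLN null set and the continuous parameter $t$: the pointwise strong law produces a possibly different null set for each $t$, and the uncountable union of such sets is not a priori negligible. The uniform $1$-Lipschitz bound together with the density of $\mathbb{Q}$ is what allows the uncountable family of pointwise convergence statements to collapse to a single almost-sure statement valid for every $t$. Everything else — integrability against $\nu_m$ and the application of the strong law itself — is routine once $m > 1/2$ is used.
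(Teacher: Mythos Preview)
Your argument is correct and follows essentially the same route as the paper: apply the strong law at each fixed $t$, then upgrade to all $t$ simultaneously via rational approximation and the uniform $1$-Lipschitz bound \eqref{eq:log-Lip-basic}. Your write-up is in fact more explicit than the paper's sketch, in particular your verification of integrability via $1+(x-t)^2 \le (1+2t^2)(1+2x^2)$ is a clean way to justify the SLLN hypothesis that the paper leaves implicit.
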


\begin{proof} 
By the strong law of large numbers, for every \textit{fixed} $t \in \mathbb R$, equation \eqref{eq:as-pointwise-stable} holds a.s. 
More specifically, for every $t \in \mathbb R$, 
there exists an event $\Omega_t$ such that $P(\Omega_t) = 1$ and for every $\omega \in \Omega_t$, 
$\lim_{n \to \infty} L_n (t)(\omega) = F_{m}(t)$. 

We use the Lipschitz continuity\footnote{This estimate works well if $x$ and $y$ are close. If $x$ or $y$ is large, the bound can be very loose.} of $\log(1+x^2)$, specifically, 
\begin{equation}\label{eq:log-Lip-basic}
\left|\log(1+x^2) - \log(1+y^2)\right| \le \left||x|-|y|\right| \le |x-y|. 
\end{equation}
Hence
\begin{equation}\label{eq:Lip-Ln} 
\left|L_n (t)(\omega) - L_n (s)(\omega) \right| \le |t-s|, \ \ t, s \in \mathbb{R}, \, n \ge 1, 
\end{equation}
and 
\begin{equation}\label{eq:Lip-Fm}  
\left|F_m (t) - F_m (s) \right| \le |t-s|, \ t, s \in \mathbb{R}. 
\end{equation} 

We use the rational approximation. 
Let $\displaystyle \Omega_{\mathbb Q} \coloneqq \bigcap_{t \in \mathbb{Q}} \Omega_t$. 
Then $P(\Omega_{\mathbb Q}) = 1$. 
Take $t \in \mathbb{R}$ arbitrarily. 
By \eqref{eq:Lip-Ln} and \eqref{eq:Lip-Fm}, 
it holds that for every $\omega \in \Omega_{\mathbb Q}$ and $s \in \mathbb{Q}$, 
\[ \limsup_{n \to \infty} |L_n (t)(\omega) - F_{m}(t)| \le 2|t-s| +  \limsup_{n \to \infty} |L_n (s)(\omega) - F_{m}(s)| = 2|t-s|. \]
Since $s \in \mathbb{Q}$ can be taken arbitrarily close to $t$, 
we see that $ \lim_{n \to \infty} |L_n (t)(\omega) - F_{m}(t)| = 0$. 
\end{proof}

The following is the uniform law of large numbers. 

\begin{Lem}\label{lem:as-cptunifconv-stable}
$P$-a.s., it holds that for every compact subset $K$ of $\mathbb{R}$, 
\[ \lim_{n \to \infty} \max_{t \in K} \left|L_n (t) -  F_{m} (t) \right| = 0. \]
\end{Lem}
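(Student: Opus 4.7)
The plan is to upgrade the a.s.\ pointwise convergence from Lemma \ref{lem:as-pointwise-stable} to uniform convergence on compacts by exploiting equicontinuity, which comes for free from the Lipschitz estimate \eqref{eq:log-Lip-basic}. The key observation is that the family $\{L_n(\cdot)(\omega)\}_n \cup \{F_m(\cdot)\}$ is uniformly $1$-Lipschitz: for every $\omega$ and every $s, t \in \mathbb{R}$,
\[
|L_n(s)(\omega) - L_n(t)(\omega)| \le \frac{1}{n}\sum_{i=1}^{n} \bigl|\log(1+(X_i(\omega)-s)^2) - \log(1+(X_i(\omega)-t)^2)\bigr| \le |s-t|,
\]
by \eqref{eq:log-Lip-basic}, and the analogous bound for $F_m$ follows by Jensen/monotone-convergence applied to the same estimate. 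Hence $L_n - F_m$ is $2$-Lipschitz on $\mathbb{R}$, uniformly in $n$ and $\omega$.

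Next I would fix the full-measure event $\Omega_0$ provided by Lemma \ref{lem:as-pointwise-stable}, on which $L_n(t)(\omega) \to F_m(t)$ holds simultaneously for every $t \in \mathbb{R}$. This single event $\Omega_0$ will witness the conclusion for every compact $K$, so the order of quantifiers in the statement is automatic. Given $\omega \in \Omega_0$, a compact set $K \subset \mathbb{R}$, and $\varepsilon > 0$, I would choose a finite $(\varepsilon/8)$-net $\{t_1, \dots, t_N\} \subset K$ (possible by compactness). Applying the pointwise a.s.\ convergence at each $t_j$ yields $N_3(\omega, K, \varepsilon) \in \mathbb{N}$ such that $\max_{1 \le j \le N} |L_n(t_j)(\omega) - F_m(t_j)| < \varepsilon/2$ for every $n > N_3$.

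Finally, for any $t \in K$, pick $t_j$ with $|t - t_j| < \varepsilon/8$ and decompose
\[
|L_n(t)(\omega) - F_m(t)| \le |L_n(t)(\omega) - L_n(t_j)(\omega)| + |L_n(t_j)(\omega) - F_m(t_j)| + |F_m(t_j) - F_m(t)|,
\]
bounding the first and third terms by $\varepsilon/8$ via the $1$-Lipschitz property and the middle term by $\varepsilon/2$. This gives $\max_{t \in K} |L_n(t)(\omega) - F_m(t)| < \varepsilon$ for every $n > N_3$, proving uniform convergence on $K$.

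I expect no substantive obstacle: this is the standard Arzel\`a--Ascoli-style reduction of uniform convergence to pointwise convergence plus equicontinuity, and the hardest ingredient (the pointwise a.s.\ convergence for all $t$ on one event) is already supplied by Lemma \ref{lem:as-pointwise-stable}. The only mild point requiring care is to avoid introducing a $K$-dependent null set; using the single event $\Omega_0$ sidesteps this, and no exhaustion by an increasing sequence of compacts is actually needed.
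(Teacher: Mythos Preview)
Your proposal is correct and follows essentially the same approach as the paper's proof: both fix the single full-measure event from Lemma~\ref{lem:as-pointwise-stable}, cover $K$ by finitely many small balls, and use the $1$-Lipschitz property of $t\mapsto\log(1+(x-t)^2)$ (inequality~\eqref{eq:log-Lip-basic}) together with a three-term triangle inequality to pass from pointwise to uniform convergence. The only differences are cosmetic (your net has radius $\varepsilon/8$ versus the paper's $\epsilon/4$, and you make the ``one event for all $K$'' point explicit).
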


\begin{proof}
By Lemma \ref{lem:as-pointwise-stable}, 
there exists an event $\Omega_3$  such that $P(\Omega_3) = 1$ and for every $\omega \in \Omega_3$, 
 \eqref{eq:as-pointwise-stable} holds for every $t \in \mathbb{R}$. 

Let $\omega \in \Omega_3$. 
Let $\epsilon > 0$ arbitrarily. 
Then, by \eqref{eq:log-Lip-basic}, 
for each $t_1, t_2 \in \mathbb R$ with $|t_1 - t_2| < \epsilon/4$,  
\[ \left| F_{m} (t_1)  - F_{m} (t_2)   \right| \le |t_1 - t_2| \le \frac{\epsilon}{4}. \]

Let $u_1, \cdots, u_{\ell}$ be points in $K$ such that $K \subset \cup_{j=1}^{\ell} (u_j - \epsilon/4, u_j + \epsilon/4)$. 
Then, by Lemma \ref{lem:as-pointwise-stable}, 
there exists $N_3 (\omega) \in \mathbb{N}$ such that for every $n > N_3 (\omega)$, 
\[ \max_{1 \le j \le \ell} \left|L_n (u_j)(\omega) - F_{m} (u_j)   \right| < \frac{\epsilon}{4}. \]

Then, by \eqref{eq:log-Lip-basic},  if $t \in K$ and $|t - u_j| < \epsilon/4$, then, for every $n > N_3 (\omega)$, 
\[ \left|L_n (t) (\omega) - F_{m} (t)  \right|  \le \frac{\epsilon}{2} + \left| L_n (u_j)(\omega) - L_n (t) (\omega)  \right| \le \epsilon.  \]
\end{proof}

\begin{Lem}\label{lem:min-integral-stable}
The function $F_{m}$ is  strictly decreasing on $(-\infty,0)$ and  strictly increasing on $(0,\infty)$. 
In particular, 
$C_{\nu_{m}} = \{0\}$. 
\end{Lem}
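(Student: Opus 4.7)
The plan is to exploit the symmetry of $\nu_m$ together with differentiation under the integral sign. Since the density of $\nu_m$ is an even function of $x$, I first observe that $F_m(-t) = F_m(t)$ for every $t \in \mathbb{R}$, so it suffices to show that $F_m$ is strictly increasing on $(0, \infty)$; strict decrease on $(-\infty, 0)$ and the identification $C_{\nu_m} = \{0\}$ will then follow for free.

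Next I would differentiate $F_m$ under the integral sign. The pointwise derivative $\partial_t \log(1+(x-t)^2) = -2(x-t)/(1+(x-t)^2)$ is bounded by $1$ uniformly in $x$ and $t$, so the dominated convergence theorem applies and
\[ F_m'(t) = -2 \int_{\mathbb R} \frac{x-t}{1+(x-t)^2}\, \nu_m(dx). \]
The defining integral of $F_m$ itself is finite because $m > 1/2$ makes $\log(1+x^2)$ integrable against the tail $(1+x^2)^{-m}$.

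The crux is then to pin down the sign of $F_m'(t)$ for $t > 0$. My plan is to first substitute $u = x - t$ to recenter the kernel and only then fold the integral over $u \in (-\infty,0) \cup (0,\infty)$, using the evenness of the density. After the change of variables and a $u \mapsto -u$ substitution on the negative half-line, one obtains
\[ F_m'(t) = 2 c_m \int_0^{\infty} \frac{u}{1+u^2}\Bigl[(1+(u-t)^2)^{-m} - (1+(u+t)^2)^{-m}\Bigr] du. \]
For $u, t > 0$ we have $(u-t)^2 < (u+t)^2$, so the bracket is strictly positive, and hence $F_m'(t) > 0$ on $(0, \infty)$; this yields the desired strict monotonicity.

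The only point requiring any care is the order of operations in the sign analysis: if one instead folds the original integral in $x$ about the origin, the resulting integrand factors through $1 + t^2 - x^2$, whose sign is not definite. Recentering by $u = x - t$ first is what produces the clean comparison $(u-t)^2 < (u+t)^2$ and makes the argument go through.
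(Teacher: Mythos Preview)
Your proposal is correct and is essentially the same argument as the paper's: both differentiate under the integral sign, perform the substitution $u=x-t$ to move the translation from the kernel $u/(1+u^2)$ onto the density $(1+(u+t)^2)^{-m}$, and then fold the integral over $(0,\infty)$ to obtain the comparison $(u-t)^2<(u+t)^2$ for $u,t>0$. The only cosmetic differences are that you invoke the evenness of $F_m$ up front and absorb the sign into the bracket, whereas the paper tracks the sign of the unfolded integral directly.
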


\begin{proof}
By the Lebesgue convergence theorem, we see that 
\begin{equation}\label{eq:Fm-derivative} 
F_{m}^{\prime}(t) = -2 c_{m} \int_{\mathbb R} \frac{x-t}{(1+(x-t)^2)(1+x^2)^{m}} dx. 
\end{equation} 
By change of variables, 
\[ \int_{\mathbb R} \frac{x-t}{(1+(x-t)^2)(1+x^2)^{m}} dx = \int_{\mathbb R} \frac{x}{(1+x^2)(1+ (x+t)^2)^{m}} dx \]
\[ = \int_{0}^{\infty} \frac{x}{(1+x^2)(1+ (x+t)^2)^{m}} dx + \int_{-\infty}^{0} \frac{x}{(1+x^2)(1+ (x+t)^2)^{m}} dx\]
\[ = \int_{0}^{\infty} \frac{x}{1+x^2} \left(\frac{1}{(1+ (x+t)^2)^{m}} - \frac{1}{(1+ (x-t)^2)^{m}} \right) dx. \]
The last integral is positive if $t < 0$, is zero if $t = 0$, and is negative if $t > 0$. 
Hence, the sign of $F_{m}^{\prime}(t)$ is equal to the sign of $t$, and hence, $F_{m}(t)$ takes its minimum only at $t = 0$. 
\end{proof}

For a non-empty subset $A$ of $\mathbb{R}$, let 
\[ d(x,A) \coloneqq \inf\left\{|x-y| : y \in A \right\}. \]

\begin{Lem}\label{lem:min-noncpt-stable}
Let $\varphi$ be a continuous function on $\mathbb R$ such that 
$\displaystyle \lim_{|z| \to \infty} \varphi(z) = \infty$.  
Let $\displaystyle C_{\varphi} \coloneqq \left\{x \in \mathbb{R} : \min_{t \in \mathbb{R}} \varphi(t) = \varphi(x) \right\}$. 
Then, 
for every $\epsilon > 0$, 
there exists $\delta > 0$ such that 
for every $x \in \mathbb{R}$ with $\displaystyle \varphi(x) \le \min_{t \in \mathbb{R}} \varphi(t) + \delta$, 
$d(x,C_{\varphi}) < \epsilon$. 
\end{Lem}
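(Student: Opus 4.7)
The plan is to argue by contradiction, exploiting the coercivity hypothesis $\varphi(z) \to \infty$ as $|z| \to \infty$ to confine things to a compact set and then invoke continuity. First I would observe that $C_{\varphi}$ is non-empty: coercivity implies that there is some $R > 0$ such that $\varphi(z) > \varphi(0) + 1$ whenever $|z| > R$, so any minimizing sequence is eventually confined to $[-R,R]$, and by Bolzano--Weierstrass together with continuity of $\varphi$, a cluster point attains the infimum. Let $m^{*} := \min_{t \in \mathbb{R}} \varphi(t)$.

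Suppose, for contradiction, that the conclusion of the lemma fails. Then there exist $\epsilon_{0} > 0$ and a sequence $(x_{n})_{n \ge 1}$ in $\mathbb{R}$ such that
\[ \varphi(x_{n}) \le m^{*} + \frac{1}{n} \quad \text{and} \quad d(x_{n}, C_{\varphi}) \ge \epsilon_{0} \]
for every $n \in \mathbb{N}$. In particular, $\varphi(x_{n}) \le m^{*} + 1$ for all $n$. Applying coercivity again, there exists $R^{\prime} > 0$ such that $\varphi(z) > m^{*} + 1$ for every $z \in \mathbb{R}$ with $|z| > R^{\prime}$; hence the sequence $(x_{n})$ is contained in the compact set $[-R^{\prime}, R^{\prime}]$.

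Passing to a subsequence, I may assume $x_{n_{k}} \to x^{*}$ for some $x^{*} \in [-R^{\prime}, R^{\prime}]$. Continuity of $\varphi$ gives $\varphi(x^{*}) = \lim_{k \to \infty} \varphi(x_{n_{k}}) \le m^{*}$, so $\varphi(x^{*}) = m^{*}$ and therefore $x^{*} \in C_{\varphi}$. But then $d(x_{n_{k}}, C_{\varphi}) \le |x_{n_{k}} - x^{*}| \to 0$, contradicting $d(x_{n_{k}}, C_{\varphi}) \ge \epsilon_{0}$. This completes the argument.

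There is no genuine obstacle here: the whole content is a soft compactness argument, and the only point that requires any care is to invoke coercivity \emph{twice} --- once (implicitly, via non-emptiness of $C_{\varphi}$) to know that $m^{*}$ is attained, and once more to guarantee that the near-minimizers $(x_{n})$ stay in a fixed compact interval so that a convergent subsequence can be extracted.
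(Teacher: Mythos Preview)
Your proof is correct and follows essentially the same approach as the paper: both argue by contradiction, use coercivity to confine the near-minimizing sequence $(x_n)$ to a compact interval, extract a convergent subsequence whose limit lies in $C_\varphi$, and derive a contradiction from the lower bound on $d(x_n, C_\varphi)$. The only cosmetic difference is that you explicitly verify $C_\varphi \neq \emptyset$ beforehand, whereas the paper takes this as implicit in writing $\min$ rather than $\inf$.
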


\begin{proof}
We show this by contradiction. 
Assume that there exists $\epsilon_0 > 0$ such that for every $n \in \mathbb{N}$, 
there exists $x_n \in \mathbb{R}$ such that $\varphi(x_n) \le \min_{t \in \mathbb{R}} \varphi(t) + 1/n$ and $d(x_n, C_{\varphi}) \ge \epsilon_0$.  
Since $\displaystyle \sup_{n \in \mathbb{N}} \varphi(x_n) < +\infty$, 
by the assumption of $\varphi$, 
$(x_n)_n$ is a bounded sequence. 
Then  there exist a subsequence $(x_{n_k})_k$ and a point $z \in \mathbb{R}$ such that $x_{n_k} \to z, k \to \infty$. 
By the continuity of $\varphi$, 
$\displaystyle \varphi(z) = \lim_{k \to \infty} \varphi(x_{n_k}) = \min_{t \in \mathbb{R}} \varphi(t)$. 
Hence, $z \in C_{\varphi}$. 
Now it suffices to recall that $|x_{n_k} - z| \ge d(x_{n_k}, C_{\varphi}) \ge \epsilon_0$ for each $k$.  
\end{proof}

\begin{Prop}[confinement of minimizers]\label{prop:non-cpt-stable}
$P$-a.s. $\omega$, it holds that for every $\epsilon > 0$, 
there exists $N_4 (\omega, \epsilon) \in \mathbb{N}$ such that for every $n > N_4 (\omega, \epsilon)$, 
$C_{Q_n (\omega)} \subset [-\epsilon, \epsilon]$. 
\end{Prop}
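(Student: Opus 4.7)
The plan is to combine all previously established lemmas in this section to show that any sample Fr\'echet minimizer $t_n \in C_{Q_n (\omega)}$ must eventually lie in a small neighborhood of the unique population minimizer $0$. The three ingredients are: the eventual boundedness of $C_{Q_n (\omega)}$ (Lemma \ref{lem:bdd-minimizer-stable}), the uniform law of large numbers on compact sets (Lemma \ref{lem:as-cptunifconv-stable}), and the uniqueness of the population minimizer together with the well-separatedness statement for almost-minimizers (Lemmas \ref{lem:min-integral-stable} and \ref{lem:min-noncpt-stable}).

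First I would fix $\omega$ in the intersection of the two probability-one events provided by Lemmas \ref{lem:bdd-minimizer-stable} and \ref{lem:as-cptunifconv-stable}. By the former, there is $N_2(\omega) \in \mathbb{N}$ such that $C_{Q_n (\omega)} \subset K := [-r_{m,1}, r_{m,1}]$ for every $n > N_2 (\omega)$. By the latter, $\sup_{t \in K} |L_n (t)(\omega) - F_m (t)| \to 0$ as $n \to \infty$.

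Next I would establish that $F_m (t_n) \to F_m (0)$ uniformly over the choice of $t_n \in C_{Q_n (\omega)}$. For any such $t_n$, combining the minimizing property $L_n (t_n)(\omega) \le L_n (0)(\omega)$ with the uniform error on $K$ gives
\[ F_m (t_n) \le L_n (0)(\omega) + \sup_{t \in K} |L_n (t)(\omega) - F_m (t)|, \]
whose right-hand side converges to $F_m (0)$ by the strong law of large numbers and the uniform convergence on $K$. Crucially, this bound is independent of the particular minimizer $t_n$, which gives the required uniformity.

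Finally I would apply Lemma \ref{lem:min-noncpt-stable} with $\varphi = F_m$. The coercivity hypothesis $\lim_{|z| \to \infty} F_m (z) = \infty$ follows from essentially the same lower bound as in Lemma \ref{lem:metlower-am-stable}, namely $F_m (t) \ge (\log(1 + t^2)/2 - \log 2)\,\nu_m ([-1,1])$ for $|t| \ge 2$, applied to the measure $\nu_m$ instead of the empirical measure. Since $C_{\nu_m} = \{0\}$ by Lemma \ref{lem:min-integral-stable}, given $\epsilon > 0$ Lemma \ref{lem:min-noncpt-stable} produces $\delta > 0$ such that $F_m (x) \le F_m (0) + \delta$ forces $|x| < \epsilon$. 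Choosing $N_4 (\omega, \epsilon) > N_2 (\omega)$ large enough that the displayed upper bound is below $F_m (0) + \delta$ for every $n > N_4 (\omega, \epsilon)$ yields the conclusion. The argument is largely routine given the preceding lemmas; the only point that requires a moment's thought is verifying the coercivity of $F_m$ and noticing that the error bound is uniform over the set of minimizers.
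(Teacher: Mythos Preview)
Your proof is correct and follows essentially the same route as the paper: bound the minimizers via Lemma~\ref{lem:bdd-minimizer-stable}, use the uniform convergence of Lemma~\ref{lem:as-cptunifconv-stable} on $K=[-r_{m,1},r_{m,1}]$ to deduce $F_m(t_n)\le F_m(0)+\delta$ for all large $n$, and then invoke Lemmas~\ref{lem:min-integral-stable} and~\ref{lem:min-noncpt-stable}. The only place you are more explicit than the paper is in checking the coercivity hypothesis $\lim_{|z|\to\infty}F_m(z)=\infty$ before applying Lemma~\ref{lem:min-noncpt-stable}; the paper applies that lemma without comment, so your added remark is a genuine (if minor) improvement in rigor.
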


\begin{proof}
By applying Lemma \ref{lem:bdd-minimizer-stable} and Lemma \ref{lem:as-cptunifconv-stable} to $K = [-r_{m,1}, r_{m,1}]$, 
it holds that for every $\omega \in \Omega_3$ and $t_n \in C_{Q_n  (\omega)}$, 
\[ \lim_{n \to \infty} \left|L_n (t_n)(\omega) -F_{m} (t_n) \right| = 0.  \]
Let $\epsilon > 0$. 
Then  
there exists $N_4 (\omega, \epsilon) \in \mathbb{N}$ such that for every $n > N_4 (\omega, \epsilon)$ and $t_n \in C_{Q_n  (\omega)}$, 
\[ F_{m}(t_n) \le L_n (t_n) (\omega)  + \frac{\epsilon}{4} \]
and 
\[ F_{m}(0)  \ge L_n (0) (\omega) - \frac{\epsilon}{4}, \]
in particular, 
\[ L_n (t_n) \le F_{m} (0) + \epsilon.\] 
Now apply Lemma \ref{lem:min-noncpt-stable} to $\displaystyle \varphi = F_{m}$ and Lemma \ref{lem:min-integral-stable}. 
\end{proof}

Recall that $\hat{\theta}_n$ is a measurable selection from $C_{Q_n (\omega)}$. 
By Proposition \ref{prop:non-cpt-stable},  
we obtain Theorem \ref{thm:SLLN}. 

\begin{Rem}
Recently, Sch\"otz \cite{Schotz2022} gave a precise analysis for the Fr\'echet mean. 
His approach uses a general ergodic theorem and differs from the above approach. 
\end{Rem}

\section{Proof of Theorem \ref{thm:CLT}}\label{sec:CLT}

We first give an outline of the proof. 
We follow the strategy of Barczy and P\'ales \cite[Section 4]{BarczyPales2023}. 
Let 
\[ D(x,t) \coloneqq \frac{x-t}{1+(x-t)^2}, \ x, t \in \mathbb{R},  \]
and 
\[ D_n (t) \coloneqq \frac{1}{n} \sum_{i=1}^{n}  D(X_i, t), \ t \in \mathbb{R}. \]
Then $-2D_{n}(t) \equiv L_{n}^{\prime}(t)$ and hence, the likelihood equation is $D_n (t) = 0$. 

Since the map $t \mapsto D(x,t)$ is not monotone on $\mathbb{R}$ for each fixed $x$, 
we cannot apply the result of \cite[Section 4]{BarczyPales2023} directly. 
Therefore we ``localize'' the argument. 
Specifically, we construct a sequence of events  $\mathcal{A}_n, n \ge 1$, defined in \eqref{eq:def-good-n} below, such that $\displaystyle \lim_{n \to \infty} P(\mathcal{A}_n) = 1$, and, on each $\mathcal{A}_n$ the map $t \mapsto D_n (t)$ is strictly decreasing on a fixed interval $I$ and has a unique zero in $I$. 
This yields that on each $\mathcal{A}_n$, $\hat{\theta}_n > t$ if and only if $D_n (t) > 0$ for $t \in I$. 
Once this localization is established, the remainder of the proof is standard. 
Consider a Taylor expansion of $D_n (y/\sqrt{n})$ at $0$ and apply the central limit theorem, the law of large numbers and Slutsky's lemma. 

The arguments in Section \ref{sec:SLLN} are not sufficient for this localization, 
since the possibility that $|C_{Q_n} \cap [-T, T]| \ge 2$ has not yet been excluded. 
We overcome this issue by Proposition \ref{prop:non-cpt-unique-stable}. 
For the proof, 
we compare $D_n(t)$ and its derivative $D_n^{\prime}(t)$ with their population counterparts $G_m(t)$ and $G_m^{\prime} (t)$ defined below. 
Lemma \ref{lem:lv2-Azuma} shows that $D_n (t)$ is uniformly close to $G_m (t)$ for large $n$. 
Lemma \ref{lem:decreasing-origin} shows that $G_m^{\prime} (t) < 0$. 
Lemma  \ref{lem:lv3-Azuma} transfers this to $D_n^{\prime}(t)$ for large $n$. 

Theorem \ref{thm:CLT} can also be shown by using van der Vaart \cite[Theorem 5.23]{Vaart1998}. 
See Remark \ref{rem:FI} (iii) below for details. 
However, throughout this paper we repeatedly use the notation such as  $\{\mathcal{A}_n\}_{n \ge 1}$ and refer to related assertions in this section, we therefore present the full details here.  

Let 
\[ G_{m}(t) \coloneqq E\left[ D(X_1, t) \right] = \int_{\mathbb R} D(x,t) \nu_{m}(dx). \]
Then, by \eqref{eq:Fm-derivative}, 
$-2G_{m}(t) \equiv F_{m}^{\prime}(t)$ and hence, $G_{m}(-t) > 0 > G_{m}(t)$ for every $t > 0$.

We show the following: 
\begin{Lem}\label{lem:lv2-Azuma}
For every $\epsilon > 0$, there exists a positive constant $c_{\epsilon,1}$ depending on $\epsilon$ such that for every $n \ge 1$,  
\[ P\left(  \max_{t \in [-1,1]} \left| D_n (t)  - G_{m}(t) \right| > \epsilon \right)  \le c_{\epsilon,1} \exp\left(- \frac{\epsilon^2}{2} n \right). \]
In particular, 
\[ \lim_{n \to \infty} \max_{t \in [-1,1]} \left| D_n (t) - G_{m}(t) \right| = 0, \ \textup{ $P$-a.s.}  \]
\end{Lem}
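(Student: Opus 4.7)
The plan is to use a grid-plus-Hoeffding argument, exploiting two structural facts about $D(x,t)=(x-t)/(1+(x-t)^2)$: it is uniformly bounded with $|D(x,t)|\le 1/2$, and as a function of $t$ it is $1$-Lipschitz uniformly in $x$, since
\[ \partial_t D(x,t)=-\frac{1-(x-t)^2}{(1+(x-t)^2)^2} \]
has absolute value bounded by $1$. These two properties immediately transfer to $G_m$, which is therefore $1$-Lipschitz on $\mathbb{R}$ and bounded by $1/2$.

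First, for each fixed $t$, Hoeffding's inequality applied to the i.i.d.\ bounded random variables $D(X_i,t)\in[-1/2,1/2]$ yields
\[ P\!\left(|D_n(t)-G_m(t)|>\delta\right)\le 2\exp(-2n\delta^2) \]
for every $\delta>0$. Next, I would choose a finite $(\epsilon/2)$-net $\{u_1,\dots,u_N\}$ of $[-1,1]$ with $N\le \lceil 4/\epsilon\rceil+1$. For any $t\in[-1,1]$, picking $u_j$ with $|t-u_j|\le\epsilon/2$, the Lipschitz bound gives
\[ |D_n(t)-G_m(t)|\le |D_n(u_j)-G_m(u_j)|+2|t-u_j|\le |D_n(u_j)-G_m(u_j)|+\epsilon, \]
because $t\mapsto D_n(t)$ is also $1$-Lipschitz (being an average of $1$-Lipschitz functions). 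Hence the event $\{\max_{t\in[-1,1]}|D_n(t)-G_m(t)|>2\epsilon\}$ is contained in $\bigcup_{j=1}^{N}\{|D_n(u_j)-G_m(u_j)|>\epsilon\}$, and a union bound together with Hoeffding yields
\[ P\!\left(\max_{t\in[-1,1]}|D_n(t)-G_m(t)|>2\epsilon\right)\le 2N\exp(-2n\epsilon^2). \]
After rescaling $\epsilon$ (replace $\epsilon$ by $\epsilon/2$ and absorb constants into $c_{\epsilon,1}$), this gives the desired bound $c_{\epsilon,1}\exp(-n\epsilon^2/2)$.

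For the almost sure statement, the summability $\sum_{n\ge 1}c_{\epsilon,1}\exp(-n\epsilon^2/2)<\infty$ and the Borel--Cantelli lemma give $\limsup_{n\to\infty}\max_{t\in[-1,1]}|D_n(t)-G_m(t)|\le\epsilon$ almost surely for each fixed $\epsilon>0$; taking $\epsilon=1/k$ and intersecting the corresponding full-measure events over $k\in\mathbb{N}$ produces the a.s.\ uniform convergence. There is no essential obstacle here: the only point requiring minor care is ensuring that both the random map $t\mapsto D_n(t)$ and its mean $G_m$ inherit the same Lipschitz constant from $D(x,\cdot)$ so that a single grid handles both.
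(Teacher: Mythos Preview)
Your proposal is correct and follows essentially the same route as the paper: both arguments exploit that $t\mapsto D(x,t)$ is $1$-Lipschitz uniformly in $x$, reduce the supremum over $[-1,1]$ to a finite grid, apply Hoeffding's inequality at each grid point, take a union bound, and finish with Borel--Cantelli. The only cosmetic differences are that the paper applies Azuma--Hoeffding to $Y_i=D(X_i,t)-G_m(t)$ with $|Y_i|\le 1$ (yielding $2\exp(-\epsilon^2 n/2)$ directly at each fixed $t$), whereas you apply the sharper range-$[-1/2,1/2]$ Hoeffding bound and then lose a factor in the rescaling step---but the final exponent $\epsilon^2/2$ is the same.
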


The constant $c_{\epsilon,1}$ is independent of $m$. 

\begin{proof}
Let $t \in [-1,1]$. 
Let $Y_i \coloneqq D(X_i, t) - G_{m}(t)$. 
Since $|D(X_i, t)| \le 1/2$ and $|G_m (t)| \le 1/2$, 
it holds that $(Y_i)_i$ are i.i.d., $|Y_i| \le 1$ and $E[Y_i] = 0$. 
By the Azuma-Hoeffding inequality (see Petrov \cite[2.6.2]{Petrov1995} or Boucheron, Lugosi and Massart \cite[Theorem 2.8]{BLM2013}), 
\[ P\left(  \left| D_n (t) - G_{m}(t) \right| > \epsilon \right) = P\left(  \left| \sum_{i=1}^{n} Y_i \right| > n\epsilon \right) \le 2\exp\left(- \frac{\epsilon^2}{2} n \right). \]

Let $\mathcal{D}_N \coloneqq \{\ell/N : -N \le \ell \le N\}$ for $N \in \mathbb{N}$. 
Since $t \mapsto D(x,t)$ is Lipschitz continuous with the Lipschitz constant $1$, 
\[  \max_{t \in [-1,1]} \left| D_n (t) - G_{m}(t) \right| \le  \max_{t \in \mathcal{D}_N} \left| D_n (t) - G_{m}(t) \right|  + \frac{2}{N}. \]
Hence, for $N > 4/\epsilon$ and $n \ge 1$, 
\[ P\left(  \max_{t \in [-1,1]} \left| D_n (t)  - G_{m}(t) \right| > \epsilon \right)  \le P\left(  \max_{t \in \mathcal{D}_N} \left| D_n (t) - G_{m}(t) \right| > \frac{\epsilon}{2} \right)\]
\[ \le \sum_{t \in \mathcal{D}_N} P\left(   \left| D_n (t) - G_{m}(t) \right| > \frac{\epsilon}{2} \right) \le 2(2N+1) \exp\left(- \frac{\epsilon^2}{2} n \right).\]
Now use the Borel-Cantelli lemma and then let $\epsilon \to +0$, and we obtain the a.s. convergence. 
\end{proof}

We see that 
\[ \partial_t D(x,t) = \frac{(x-t)^2 -1}{(1+ (x-t)^2)^2}. \]

\begin{Lem}\label{lem:decreasing-origin}
There exists a constant $r_{m,2} \in (0,1)$ such that $G_{m}^{\prime}(t) < 0$ for every $t \in [-r_{m,2}, r_{m,2}]$. 
\end{Lem}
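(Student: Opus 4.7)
The plan is to compute $G_{m}'(0)$ explicitly, show it is strictly negative, and then invoke continuity of $G_{m}'$ near the origin.

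First, I would differentiate under the integral sign. Since $|\partial_t D(x,t)| = \left|\frac{(x-t)^2 - 1}{(1+(x-t)^2)^2}\right| \le 1$ uniformly in $x, t$, and $\nu_m$ is a probability measure, dominated convergence justifies
\[
G_m'(t) = \int_{\mathbb{R}} \frac{(x-t)^2 - 1}{(1+(x-t)^2)^2}\, \nu_m(dx),
\]
and the same bound shows that $G_m'$ is continuous on $\mathbb{R}$ (apply dominated convergence to any sequence $t_n \to t$).

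Next, I would evaluate $G_m'(0)$ explicitly. Writing $x^2 - 1 = (1+x^2) - 2$ and using $\nu_m(dx) = c_m (1+x^2)^{-m} dx$,
\[
G_m'(0) = c_m \int_{\mathbb{R}} \frac{x^2 - 1}{(1+x^2)^{m+2}}\, dx = c_m\bigl(I_{m+1} - 2 I_{m+2}\bigr),
\]
where $I_a := \int_{\mathbb{R}} (1+x^2)^{-a}\, dx$. Since $m > 1/2$, both integrals converge, and the classical Beta-function identity gives $I_a = \sqrt{\pi}\, \Gamma(a - 1/2)/\Gamma(a)$. Using $\Gamma(m+3/2) = (m+1/2)\Gamma(m+1/2)$ and $\Gamma(m+2) = (m+1)\Gamma(m+1)$, one obtains $I_{m+2} = \frac{2m+1}{2(m+1)} I_{m+1}$, whence
\[
I_{m+1} - 2 I_{m+2} = I_{m+1} \left(1 - \frac{2m+1}{m+1}\right) = -\frac{m}{m+1}\, I_{m+1}.
\]
Since $m > 1/2 > 0$ and $c_m, I_{m+1} > 0$, this yields $G_m'(0) = -\frac{m}{m+1}\, c_m I_{m+1} < 0$.

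Finally, by the continuity of $G_m'$ established above, there exists $r_{m,2} \in (0,1)$ such that $G_m'(t) < 0$ for all $t \in [-r_{m,2}, r_{m,2}]$, which is the claim. There is no real obstacle here: the only point requiring any care is the justification of differentiating under the integral (standard, thanks to the uniform bound $|\partial_t D| \le 1$), and the reduction of $I_{m+2}$ to $I_{m+1}$ via the Gamma recursion, which is routine once the Beta-function representation of $I_a$ is invoked.
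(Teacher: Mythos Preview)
Your proof is correct. The overall architecture---differentiate under the integral, show $G_m'(0)<0$, invoke continuity---is the same as the paper's, but you handle the key computation differently. The paper substitutes $x=\tan\theta$ to rewrite $\int_{\mathbb R}\frac{x^2-1}{(1+x^2)^{m+2}}\,dx$ as $-2\int_0^{\pi/2}\cos^{2m}\theta\cos(2\theta)\,d\theta$ and then shows this integral is positive by splitting at $\pi/4$ and comparing $\cos^{2m}\theta$ with $\cos^{2m}(\pi/2-\theta)$; this is elementary and avoids any special-function machinery. Your route via $x^2-1=(1+x^2)-2$ and the Gamma recursion $I_{m+2}=\frac{2m+1}{2(m+1)}I_{m+1}$ is equally clean and in fact gives more: combining it with $c_m=1/I_m$ one obtains the explicit value $G_m'(0)=-\frac{2m-1}{2(m+1)}$, which the paper only derives later in \eqref{eq:mean-partial-D}. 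Either approach suffices here.
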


\begin{proof}
By the Lebesgue convergence theorem, 
we see that 
\[ G_{m}^{\prime}(t) = \int_{\mathbb R} \partial_t D(x,t) \nu_{m}(dx). \]
By the Lebesgue convergence theorem again, 
we see that $G_{m}^{\prime}$ is continuous. 
Hence, it suffices to show that $G_{m}^{\prime}(0) < 0$. 

By the change of variables $x = \tan\theta$, 
\[ \int_{\mathbb R} \frac{x^2 -1}{(1+x^2)^{2+m}} dx = -\int_{-\pi/2}^{\pi/2} \cos^{2m} \theta \cos(2\theta) d\theta =  -2\int_{0}^{\pi/2} \cos^{2m} \theta \cos(2\theta) d\theta.   \]
We see that 
\[ \int_{0}^{\pi/2} \cos^{2m} \theta \cos(2\theta) d\theta = \int_{0}^{\pi/4} \cos^{2m} \theta \cos(2\theta) d\theta + \int_{\pi/4}^{\pi/2} \cos^{2m} \theta \cos(2\theta) d\theta\]
\[  = \int_{0}^{\pi/4} \cos^{2m} \theta \cos(2\theta) d\theta - \int_{0}^{\pi/4} \cos^{2m} \left(\frac{\pi}{2} -\theta \right) \cos(2\theta) d\theta > 0. \]
\end{proof}

We also deal with the derivatives of $D_n (t)$ and $G_m (t)$ with respect to $t$. 
The following corresponds to \cite[(3.32)]{BaiFu1987}. 
\begin{Lem}\label{lem:lv3-Azuma}
For every $\epsilon > 0$, there exists a positive constant $c_{\epsilon,2}$  depending on $\epsilon$  such that for every $n \ge 1$,  
\[ P\left(  \max_{t \in [-1,1]} \left| D_n^{\prime} (t)  - G_{m}^{\prime} (t) \right| > \epsilon \right)  \le c_{\epsilon,2} \exp\left(- \frac{\epsilon^2}{12} n \right). \]
In particular, 
\[ \lim_{n \to \infty} \max_{t \in [-r_{m,2}, r_{m,2}]} \left| D_n^{\prime} (t) - G_{m}^{\prime}(t) \right| = 0, \ \textup{ $P$-a.s.}  \]
\end{Lem}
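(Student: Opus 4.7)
The plan is to follow the strategy of Lemma~\ref{lem:lv2-Azuma} essentially verbatim, with $D(x,t)$ replaced throughout by $\partial_t D(x,t)$ and $G_m(t)$ by $G_m'(t)$. Two inputs beyond what is used there are needed, and both are elementary consequences of the formula
\[ \partial_t D(x,t) = \frac{(x-t)^2-1}{(1+(x-t)^2)^2} \]
already recorded just before Lemma \ref{lem:decreasing-origin}.

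For the pointwise bound, fix $t\in[-1,1]$ and set $Y_i := \partial_t D(X_i,t) - G_m'(t)$. Writing $u=(x-t)^2\ge 0$, the scalar function $(u-1)/(1+u)^2$ attains its extrema $-1$ at $u=0$ and $1/8$ at $u=3$, so $|\partial_t D|\le 1$ uniformly in $(x,t)$, hence $|Y_i|\le 2$. The Azuma--Hoeffding inequality then yields an exponential pointwise tail of the form $P(|D_n'(t)-G_m'(t)|>\eta)\le 2\exp(-n\eta^2/8)$ for every $\eta>0$.

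To upgrade this to a supremum bound I would discretize using Lipschitz continuity of $t\mapsto\partial_t D(x,t)$ uniformly in $x$. Differentiating once more, $\partial_t^2 D(x,t)$ is an explicit rational function of $x-t$ that is continuous on $\mathbb R$ and decays to $0$ as $|x-t|\to\infty$; it is therefore bounded by some absolute constant $L$. Setting $\mathcal D_N=\{\ell/N:-N\le\ell\le N\}$, this gives
\[ \max_{t\in[-1,1]}\left|D_n'(t)-G_m'(t)\right|\le \max_{t\in\mathcal D_N}\left|D_n'(t)-G_m'(t)\right| + \frac{2L}{N}. \]
I would then choose $N$ large enough in terms of $\epsilon$ so that the grid error $2L/N$ is only a small fraction of $\epsilon$, making the shrunken pointwise exponent $\eta^2/8$ exceed the target $\epsilon^2/12$. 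A union bound over the $2N+1$ grid points delivers the stated inequality, with the factor $N$ absorbed into the prefactor $c_{\epsilon,2}$. The almost sure convergence on $[-r_{m,2},r_{m,2}]\subset[-1,1]$ then follows by applying Borel--Cantelli along a countable sequence $\epsilon_k\downarrow 0$, exactly as at the end of the proof of Lemma \ref{lem:lv2-Azuma}.

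The only step that is not completely mechanical is verifying that $|\partial_t^2 D(x,t)|$ is uniformly bounded in $(x,t)$; this reduces to showing that a single-variable rational function in $u=x-t$ that vanishes at infinity is bounded, which is routine but is where the explicit constant $L$, and hence the prefactor $c_{\epsilon,2}$ in the statement, is pinned down.
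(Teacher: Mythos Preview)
Your proposal is correct and follows essentially the same route as the paper: the paper also sets $Y_i'=\partial_t D(X_i,t)-G_m'(t)$, uses $|\partial_t D|\le 1$ to get $|Y_i'|\le 2$, invokes the explicit formula \eqref{eq:2nd-derivative-D} to obtain the uniform bound $|\partial_t^2 D|\le 3$ (your constant $L$) for the Lipschitz step, and then refers back to the discretization/union-bound/Borel--Cantelli argument of Lemma~\ref{lem:lv2-Azuma}. The only cosmetic difference is that the paper fixes $L=3$ from the outset, whereas you leave it abstract and note that the choice of grid mesh must be fine enough to push the Hoeffding exponent past $\epsilon^2/12$.
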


As in Lemma \ref{lem:lv3-Azuma}, the constant $c_{\epsilon,2}$ is also independent of $m$. 

\begin{proof}
By 
\begin{equation}\label{eq:2nd-derivative-D} 
\partial_t^2 D(x,t) = \frac{2(x-t)((x-t)^2 - 3)}{(1+(x-t)^2)^3}, 
\end{equation}
$|\partial_t^2 D(x,t)| \le 3$, and hence, 
the map $t \mapsto \partial_t D(x,t)$ is Lipschitz continuous with the Lipschitz constant $3$. 
Let $Y^{\prime}_i \coloneqq \partial_t D(X_i,t) - G_{m}^{\prime}(t)$. 
Since $|\partial_t  D(X_i, t)| \le 1$ and $|G_m^{\prime} (t)| \le 1$, 
$\left(Y^{\prime}_i \right)_i$ are i.i.d., $|Y_i^{\prime}| \le 2$ and $E[Y_i^{\prime}] = 0$. 
Therefore, we can show this assertion as in the proof of Lemma \ref{lem:lv2-Azuma}. 
\end{proof}

We remark that $C_{Q_n (\omega)} \ne \emptyset$ and 
\[ C_{Q_n (\omega)} \subset \left\{t \in \mathbb{R} \middle| D_n (t) (\omega) = 0  \right\}. \]

\begin{Prop}\label{prop:non-cpt-unique-stable}
$P$-a.s. $\omega$, there exists $N_5 (\omega) \in \mathbb{N}$ such that for every $n > N_5 (\omega)$, 
$\left|C_{Q_n (\omega)} \cap [-r_{m,2}, r_{m,2}] \right| = 1$.  
\end{Prop}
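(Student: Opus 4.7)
The plan is to combine the uniform control on $D_n^{\prime}$ from Lemma \ref{lem:lv3-Azuma} with the confinement of minimizers from Proposition \ref{prop:non-cpt-stable}: together they force $D_n(\cdot)(\omega)$ to be strictly decreasing on $[-r_{m,2}, r_{m,2}]$ for all large $n$, and they push every element of $C_{Q_n(\omega)}$ into this interval. Since every Fr\'echet minimizer is a critical point of $L_n$, hence a zero of $D_n$, strict monotonicity then leaves room for at most one zero in $[-r_{m,2}, r_{m,2}]$, while the inclusion $C_{Q_n(\omega)} \subset [-r_{m,2}, r_{m,2}]$ together with the non-emptiness of $C_{Q_n(\omega)}$ guarantees at least one.

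First I would upgrade Lemma \ref{lem:decreasing-origin} to a uniform bound. The derivative $G_m^{\prime}$ is continuous on $\mathbb{R}$ by dominated convergence, so compactness of $[-r_{m,2}, r_{m,2}]$ together with the strict inequality $G_m^{\prime} < 0$ there yields a constant $\eta_m > 0$ with $\max_{t \in [-r_{m,2}, r_{m,2}]} G_m^{\prime}(t) \le -\eta_m$. Applying Lemma \ref{lem:lv3-Azuma} with $\epsilon = \eta_m/2$ then supplies a full-probability event $\Omega^{*}$ on which, for every $n > M_1(\omega)$, one has $D_n^{\prime}(t)(\omega) \le -\eta_m/2$ uniformly in $t \in [-r_{m,2}, r_{m,2}]$. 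On $\Omega^{*}$, the map $D_n(\cdot)(\omega)$ is therefore strictly decreasing on $[-r_{m,2}, r_{m,2}]$ and has at most one zero there.

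Next I would invoke Proposition \ref{prop:non-cpt-stable} with $\epsilon = r_{m,2}$: on another full-probability event $\Omega^{**}$, there exists $M_2(\omega)$ such that $C_{Q_n(\omega)} \subset [-r_{m,2}, r_{m,2}]$ for all $n > M_2(\omega)$. Because $L_n(\cdot)(\omega)$ is smooth with $L_n(t)(\omega) \to \infty$ as $|t| \to \infty$, the set $C_{Q_n(\omega)}$ is non-empty, and each of its points satisfies $D_n(t)(\omega) = 0$. Setting $N_5(\omega) := \max\{M_1(\omega), M_2(\omega)\}$, on $\Omega^{*} \cap \Omega^{**}$ and for $n > N_5(\omega)$, the non-empty set $C_{Q_n(\omega)}$ is contained in $\{t \in [-r_{m,2}, r_{m,2}] : D_n(t)(\omega) = 0\}$, which is a singleton by the first paragraph. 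This gives $|C_{Q_n(\omega)} \cap [-r_{m,2}, r_{m,2}]| = 1$.

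The only genuine obstacle is the passage from the pointwise inequality $G_m^{\prime} < 0$ on $[-r_{m,2}, r_{m,2}]$ (provided by Lemma \ref{lem:decreasing-origin}) to the uniform strict bound $-\eta_m$, together with the verification that $G_m^{\prime}$ is continuous so that compactness can be used. Once this is in hand, the remaining steps are routine bookkeeping combining two already-established almost-sure statements and exploiting that every minimizer of $L_n$ is a zero of $D_n$.
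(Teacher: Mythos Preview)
Your proposal is correct and follows essentially the same route as the paper: confine $C_{Q_n(\omega)}$ into $[-r_{m,2},r_{m,2}]$ via Proposition~\ref{prop:non-cpt-stable}, use Lemma~\ref{lem:lv3-Azuma} together with the uniform negativity of $G_m'$ on this interval (your $\eta_m$ is exactly the paper's $2c_{m,2}$, and the continuity of $G_m'$ you worry about is already noted in the proof of Lemma~\ref{lem:decreasing-origin}) to force $D_n(\cdot)(\omega)$ to be strictly decreasing there, and conclude via $C_{Q_n(\omega)}\subset\{D_n=0\}$. The one difference is that the paper additionally invokes Lemma~\ref{lem:lv2-Azuma} to obtain the sign change $D_n(-r_{m,2})>0>D_n(r_{m,2})$ and then applies the intermediate value theorem to get \emph{exactly} one zero in the interval; your version observes that strict monotonicity alone already gives \emph{at most} one zero, which combined with $\emptyset\ne C_{Q_n(\omega)}\subset[-r_{m,2},r_{m,2}]$ suffices for the proposition, so your argument is slightly more economical here.
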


\begin{proof}
By Proposition \ref{prop:non-cpt-stable}, 
it holds that $P$-a.s. $\omega$, 
for $n \ge N_4 (\omega, r_{m,2})$, 
$|C_{Q_n (\omega)} \cap [-r_{m,2}, r_{m,2}]| = |C_{Q_n (\omega)}| \ge 1$.

Let 
\[ c_{m,2} \coloneqq \frac{1}{2} \min_{t \in [-r_{m,2}, r_{m,2}]} -G_{m}^{\prime}(t), \]
which is positive by Lemma \ref{lem:decreasing-origin}. 

By Lemma \ref{lem:lv3-Azuma}, 
it holds that $P$-a.s. $\omega$, 
there exists $N_{6}(\omega) \in \mathbb{N}$ such that for every $n > N_{6}(\omega)$, 
\[ \max_{t \in [-r_{m,2}, r_{m,2}]} D_n^{\prime} (t) (\omega) \le -c_{m,2}, \]  
in particular, $D_n (t) (\omega) $ is strictly decreasing in $t$ on $[-r_{m,2}, r_{m,2}]$.

Furthermore, by Lemma \ref{lem:lv2-Azuma}, 
it holds that $P$-a.s. $\omega$, 
there exists $N_{7}(\omega) \in \mathbb{N}$ such that for every $n > N_{7}(\omega)$, 
\[ D_n (-r_{m,2}) (\omega) > 0 > D_n (r_{m,2})(\omega).  \]  

By the intermediate value theorem, 
it holds that $P$-a.s. $\omega$, 
there exists $N_{8}(\omega) \in \mathbb{N}$ such that for every $n > N_{8}(\omega)$, 
\[ \left| \left\{t \in [-r_{m,2}, r_{m,2}]  \middle| D_n (t) (\omega) = 0  \right\} \right| = 1,  \]
which implies 
$|C_{Q_n (\omega)} \cap [-r_{m,2}, r_{m,2}]| \le 1$. 
\end{proof}

Let $\mathcal{A}_{n,1}$ be the event that $D_n (-r_{m,2}) > 0 > D_n (r_{m,2})$. 
Let $\mathcal{A}_{n,2}$ be the event that $D_n^{\prime}(t) \le -c_{m,2}/2$ for every $t \in [-r_{m,2}, r_{m,2}]$. 
Let $\mathcal{A}_{n,3}$ be the event that $|C_{Q_n}| = 1$ and $\hat{\theta}_n \in [-r_{m,2}/2, r_{m,2}/2]$. 
Let 
\begin{equation}\label{eq:def-good-n}
\mathcal{A}_{n} \coloneqq \mathcal{A}_{n,1} \cap \mathcal{A}_{n,2} \cap \mathcal{A}_{n,3}. 
\end{equation} 
Let 
\[ \widetilde{\mathcal{A}_i} \coloneqq \bigcup_{N \ge 1} \bigcap_{n \ge N} \mathcal{A}_{n,i}, \ \ i = 1,2,3.\] 

By Lemma \ref{lem:lv2-Azuma}, $\displaystyle P\left( \widetilde{\mathcal{A}_1} \right) = 1$. 
By Lemma \ref{lem:lv3-Azuma}, $\displaystyle P\left( \widetilde{\mathcal{A}_2} \right) = 1$. 
By Propositions \ref{prop:non-cpt-stable} and \ref{prop:non-cpt-unique-stable}, $\displaystyle P\left( \widetilde{\mathcal{A}_3} \right) = 1$. 
Since $\displaystyle \widetilde{\mathcal{A}_1} \cap \widetilde{\mathcal{A}_2} \cap \widetilde{\mathcal{A}_3} =  \bigcup_{N \ge 1} \bigcap_{n \ge N} \mathcal{A}_{n}$, 
$\displaystyle P\left( \bigcup_{N \ge 1} \bigcap_{n \ge N} \mathcal{A}_{n} \right) = 1$, 
and in particular, 
$\displaystyle \lim_{n \to \infty} P(\mathcal{A}_n) = 1$. 

For every $t \in (-r_{m,2} /2, r_{m,2} /2)$, on $\mathcal{A}_n$, 
$\hat{\theta}_n < t$ if and only if $D_n (t) < 0$.

Let $y \in \mathbb{R}$.  
Then  
\[ \lim_{n \to \infty} P\left(\sqrt{n}\hat{\theta}_n < y\right) - P\left(\left\{\sqrt{n}\hat{\theta}_n < y \right\} \cap \mathcal{A}_n\right) = 0, \]
and 
\[ \lim_{n \to \infty} P\left( D_n \left(\frac{y}{\sqrt{n}}\right) < 0\right) - P\left(\left\{ D_n \left(\frac{y}{\sqrt{n}}\right) < 0 \right\} \cap \mathcal{A}_n\right) = 0. \]
Since 
\[  P\left(\left\{\sqrt{n}\hat{\theta}_n < y \right\} \cap \mathcal{A}_n\right) = P\left(\left\{ D_n \left(\frac{y}{\sqrt{n}}\right) < 0 \right\} \cap \mathcal{A}_n\right) \]
for every $n$ satisfying that $n > 4y^2$, 
\[ \lim_{n \to \infty} P\left(\sqrt{n}\hat{\theta}_n < y\right) - P\left( D_n \left(\frac{y}{\sqrt{n}}\right) < 0\right) = 0. \]

Hence, it suffices to show that 
\begin{equation}\label{eq:dm-limit-1}
 \lim_{n \to \infty} P\left(D_n \left(\frac{y}{\sqrt{n}}\right) < 0\right) = \int_{-\infty}^y \varphi_{m} (t) dt, 
\end{equation}
where $\varphi_{m}$ is the density function of the distribution $\displaystyle N\left(0,\frac{m + 1}{m (2m -1)} \right)$. 

It holds that 
\[ \sqrt{n} D_n \left(\frac{y}{\sqrt{n}}\right) = \sqrt{n} D_n \left(0 \right) + \frac{y}{n} \sum_{i=1}^{n} \partial_t D (X_i, 0) \]
\[ + \frac{1}{\sqrt{n}} \sum_{i=1}^{n} \left(D\left(X_i, \frac{y}{\sqrt{n}}\right) - D\left(X_i, 0\right)  - \frac{y}{\sqrt{n}} \partial_t D (X_i, 0)\right).  \]

By symmetry, 
\begin{equation}\label{eq:mean-exp-D-zero} 
E\left[D(X_1, 0) \right] = c_{m} \int_{\mathbb R} \frac{x}{(1+x^2)^{1+m}} dx = 0. 
\end{equation} 

By the change of variables $x = \tan \theta$, 
\begin{equation}\label{eq:squared-exp-D} 
E\left[D(X_1, 0)^2 \right] = c_{m} \int_{\mathbb R} \frac{x^2}{(1+x^2)^{2+m}} dx = \frac{B(3/2, m + 1/2)}{B(1/2, m - 1/2)} =  \frac{2m -1}{4m (m +1)}, 
\end{equation} 
where $B(\cdot, \cdot)$ is the beta function. 
Hence, 
\begin{equation}\label{eq:CLT-core} 
\sqrt{n} D_n \left(0 \right)  \Rightarrow N\left(0, \frac{2m -1}{4m (m +1)} \right), \ n \to \infty, 
\end{equation}
where $\Rightarrow$ denotes the convergence in distribution. 

It holds that 
\[ E\left[  |\partial_t D(X_1,0)| \right] \le c_{m} \int_{\mathbb R} \frac{1}{(1+x^2)^{1+m}} dx < \infty, \]
and 
\begin{align}\label{eq:mean-partial-D} 
E\left[  \partial_t D(X_1,0) \right] &= c_{m} \int_{\mathbb R} \frac{x^2 -1}{(1+x^2)^{2+m}} dx \notag \\ 
&= \frac{B(3/2, m + 1/2)}{B(1/2, m - 1/2)} - \frac{B(1/2, m + 3/2)}{B(1/2, m - 1/2)} = -\frac{2m -1}{2(m + 1)}. 
\end{align}

Hence, by the strong law of large numbers, 
\begin{equation}\label{eq:CLT-const} 
\lim_{n \to \infty} \frac{y}{n} \sum_{i=1}^{n} \partial_t D (X_i, 0) = -\frac{2m -1}{2(m + 1)}y, \ \textup{ $P$-a.s.}  
\end{equation}

By \eqref{eq:2nd-derivative-D}, 
\[ \max_{x,t \in \mathbb{R}} \left|  \partial_t^2 D(x,t) \right| = \max_{y \in \mathbb{R}} \frac{2|y| |y^2 - 3|}{(1+y^2)^3} \eqqcolon C_1 < \infty.  \]
By this and the mean value theorem, 
it holds\footnote{This holds without any exceptional set.} that 
\begin{equation}\label{eq:non-as-estimate} 
\left|  D\left(X_i, \frac{y}{\sqrt{n}}\right) - D\left(X_i, 0\right)  - \frac{y}{\sqrt{n}} \partial_t D (X_i, 0) \right| \le C_1 \frac{|y|^2}{n}. 
\end{equation}
Hence, 
\begin{equation}\label{eq:CLT-vanish} 
\lim_{n \to \infty} \frac{1}{\sqrt{n}} \sum_{i=1}^{n} \left(D\left(X_i, \frac{y}{\sqrt{n}}\right) - D\left(X_i, 0\right)  - \frac{y}{\sqrt{n}} \partial_t D (X_i, 0) \right) = 0, \  \textup{ $P$-a.s.}  
\end{equation}

By \eqref{eq:CLT-core}, \eqref{eq:CLT-const}, \eqref{eq:CLT-vanish} and Slutsky's lemma, 
\[  \sqrt{n} D_n \left(\frac{y}{\sqrt{n}}\right) \Rightarrow N\left(-\frac{2m -1}{2(m + 1)}y,   \frac{2m -1}{4m (m +1)} \right), \ n \to \infty.  \]
Thus we see that \eqref{eq:dm-limit-1} holds and the proof of Theorem \ref{thm:CLT} is completed. 

Subsection \ref{subsec:AN-simulation} below provides numerical verifications of Theorem \ref{thm:CLT} by using the Kolmogorov--Smirnov distance. 
Subsection \ref{subsec:CI} below provides confidence intervals of the parameter $\theta$ by using $\hat{\theta}_n$. 

\begin{Rem}\label{rem:FI}
(i) By \eqref{eq:squared-exp-D}, 
the Fisher information is given by 
\[ I(0) = E\left[ \left( \frac{\partial}{\partial t} \log f(X_1, t) \Bigg|_{t=0} \right)^2 \right] = 4m^2 E\left[ D(X_1, 0)^2 \right] = \frac{m(2m -1)}{m +1}. \]
(ii) The likelihood equation $D_n (t) = 0$ does not depend on the parameter $m$. 
For $m=1$, \cite{Reeds1985} shows that for each $k \ge 0$, 
\[ \lim_{n \to \infty} P\left( \left| \{t \in \mathbb{R} | D_n (t) = 0\}\right| = 2k+1 \right) =  \exp\left(-\frac{1}{\pi}\right) \frac{1}{k! \pi^k}. \] 
Here $c_1 = 1/\pi$, and we conjecture that for each $m > 1/2$ and $k \ge 0$, 
\[ \lim_{n \to \infty} P\left( \left| \{t \in \mathbb{R} | D_n (t) = 0\}\right| = 2k+1 \right) = \exp\left(-c_m\right) \frac{c_m^k}{k!}. \] 
(iii) We can apply \cite[Theorem 5.23]{Vaart1998}. 
We confirm the assumptions of the assertion. 
Let $\mathfrak{m}(x, \theta) \coloneqq -\log(1+(x-\theta)^2), \ x, \theta \in \mathbb{R}$. 
Then the MLE $\hat{\theta}_n$ maximizes the map $\theta \mapsto \sum_{i=1}^{n} \mathfrak{m}(X_i, \theta)$. 
We see that $\mathfrak{m} \in C^{\infty}(\mathbb{R}^2)$ and  by \eqref{eq:log-Lip-basic}, 
$|\mathfrak{m}(x, \theta_1) - \mathfrak{m}(x, \theta_2)| \le |\theta_1 - \theta_2|$. 
It holds that $E\left[ \mathfrak{m}(X_1, \theta) \right] = -F_m (\theta)$. 
Since $F_m^{\prime} = -2G_m$ and the argument in the proof of Lemma  \ref{lem:decreasing-origin}, $G_m \in C^1 (\mathbb R)$ and hence $F_m \in C^2 (\mathbb{R})$. 
Hence $F_m$ admits the second-order Taylor expansion at $\theta = 0$. 
Furthermore $F_m^{\prime\prime} (0) = -2 G_m^{\prime}(0) > 0$ by Lemma  \ref{lem:decreasing-origin}. 
Finally we recall Theorem \ref{thm:SLLN}. 
Thus the assumptions of \cite[Theorem 5.23]{Vaart1998} are satisfied and Theorem \ref{thm:CLT} follows. 
\end{Rem}

\section{Proof of Theorem \ref{thm:LIL}}\label{sec:LIL}

We first give an outline of the proof. 
We follow the strategy of Barczy and P\'ales \cite[Section 5]{BarczyPales2023}. 
As in the case of Theorem \ref{thm:CLT}, we cannot apply the result of   \cite{BarczyPales2023} directly and need to modify several parts. 
We evaluate the normalized score along the scale of the law of the iterated logarithm and use the decomposition in \eqref{eq:RST} below into the leading fluctuation term $R_n$, the drift term $\gamma S_n$, and the remainder $T_n^{(\gamma)}$. 
We apply the Kolmogorov law of the iterated logarithm to $R_n$, the strong law of large numbers to $S_n$, and finally show that $T_n^{(\gamma)}$ is negligible.

Let $\phi(n) \coloneqq \sqrt{2n \log \log n}$. 
Let 
\[ R_n \coloneqq \frac{1}{\phi(n)} \sum_{i=1}^{n} D(X_i, 0),  \]
\[ S_n \coloneqq \frac{1}{n} \sum_{i=1}^{n} \partial_t D (X_i, 0), \]
and 
\[ T_n^{(\gamma)} \coloneqq \frac{1}{\phi(n)} \sum_{i=1}^{n} \left(D\left(X_i, \gamma \frac{\phi(n)}{n}\right) - D(X_i, 0) - \gamma \frac{\phi(n)}{n} \partial_t D (X_i, 0)\right). \]
Then  
\begin{equation}\label{eq:RST}  
R_n + \gamma S_n + T_n^{(\gamma)} = \frac{1}{\phi(n)} \sum_{i=1}^{n} D\left(X_i, \gamma \frac{\phi(n)}{n}\right). 
\end{equation}

Since $\displaystyle \max_{x,t \in \mathbb{R}}|D(x,t)| \le 1/2$, 
by the Kolmogorov law of the iterated logarithm,  
there exists an event $\Omega_4$ such that $P(\Omega_4) = 1$ and for every $\omega \in \Omega_4$, 
\begin{equation}\label{eq:lim-R}   
\limsup_{n \to \infty} R_n (\omega) = \sqrt{\frac{2m -1}{4m (m +1)}}.    
\end{equation}
By the strong law of large numbers, 
there exists an event $\Omega_5$ such that $P(\Omega_5) = 1$ and for every $\omega \in \Omega_5$, 
\begin{equation}\label{eq:lim-S}  
\lim_{n \to \infty} S_n (\omega) = -\frac{2m -1}{2(m + 1)}.    
\end{equation}

Let 
$\displaystyle \Omega_{6} \coloneqq \Omega_4 \cap \Omega_5 \cap \left(\bigcup_{N \ge 1} \bigcap_{n \ge N} \mathcal{A}_n \right)$. 
Then  
$P\left(\Omega_{6}\right) = 1$. 

By the uniform estimate \eqref{eq:non-as-estimate}, 
for every $\omega \in \Omega_{6}$ and every $\gamma \in \mathbb{R}$, 
\begin{equation}\label{eq:lim-T}   
\lim_{n \to \infty} T_{n}^{(\gamma)}(\omega) = 0. 
\end{equation}

Let $\sigma_{m} \coloneqq \sqrt{\dfrac{m + 1}{m (2m -1)}}$. 
Let $\omega \in \Omega_{6}$ and $\epsilon > 0$. 
Then  there exists $N_9 (\omega, \epsilon) \in \mathbb{N}$ such that for every $n \ge N_9 (\omega, \epsilon)$, 
$\hat{\theta}_n (\omega) < (\sigma_{m} + \epsilon) \dfrac{\phi(n)}{n}$ holds if and only if $\displaystyle \sum_{i=1}^{n} D\left(X_i (\omega), (\sigma_{m} + \epsilon)  \frac{\phi(n)}{n}\right) < 0$ holds. 
By \eqref{eq:RST}, this is equivalent to  
\begin{equation}\label{eq:RST-sum-neg}
R_n (\omega) + (\sigma_{m} + \epsilon)  S_n (\omega) + T_n^{(\sigma_{m} + \epsilon)}(\omega) < 0. 
\end{equation}
By \eqref{eq:lim-R}, \eqref{eq:lim-S} and \eqref{eq:lim-T}, 
there exists $N_{10} (\omega, \epsilon) \in \mathbb{N}$ such that for every $n \ge N_{10} (\omega, \epsilon)$, 
\eqref{eq:RST-sum-neg} holds.  
Hence,  for every $n \ge \max\{N_{9} (\omega, \epsilon), N_{10} (\omega, \epsilon)\}$, 
$\hat{\theta}_n (\omega) < (\sigma_{m} + \epsilon) \dfrac{\phi(n)}{n}$ holds
and hence, 
\[ \limsup_{n \to \infty}\frac{n \hat{\theta}_n (\omega)}{\phi(n)} \le \sigma_{m} + \epsilon.  \]
By letting $\epsilon \to 0$, 
\begin{equation}\label{eq:LIL-upper} 
\limsup_{n \to \infty}\frac{n \hat{\theta}_n (\omega)}{\phi(n)} \le \sigma_{m}.  
\end{equation}

We can show the lower bound in the same manner. 
There exists $N_{11} (\omega, \epsilon) \in \mathbb{N}$ such that for every $n \ge N_{11} (\omega, \epsilon)$, 
$\hat{\theta}_n (\omega) > (\sigma_{m} - \epsilon) \dfrac{\phi(n)}{n}$ holds if and only if $\displaystyle \sum_{i=1}^{n} D\left(X_i (\omega), (\sigma_{m} - \epsilon)  \frac{\phi(n)}{n}\right) > 0$ holds. 
By \eqref{eq:RST}, this is equivalent to  
\begin{equation}\label{eq:RST-sum-posi}
R_n (\omega) + (\sigma_{m} - \epsilon)  S_n (\omega) + T_n^{(\sigma_{m} - \epsilon)}(\omega) > 0. 
\end{equation}

By \eqref{eq:lim-R}, \eqref{eq:lim-S} and \eqref{eq:lim-T}, 
\eqref{eq:RST-sum-posi} holds for infinitely many $n$.  
Hence, 
$\hat{\theta}_n (\omega) > (\sigma_{m} - \epsilon) \dfrac{\phi(n)}{n}$ holds for infinitely many $n$, 
and hence, 
\[ \limsup_{n \to \infty}\frac{n \hat{\theta}_n (\omega)}{\phi(n)} \ge \sigma_{m} - \epsilon.  \]
By letting $\epsilon \to 0$, 
\begin{equation}\label{eq:LIL-lower}
\limsup_{n \to \infty}\frac{n \hat{\theta}_n (\omega)}{\phi(n)} \ge \sigma_{m}. 
\end{equation}

By \eqref{eq:LIL-upper} and \eqref{eq:LIL-lower}, 
\[ \limsup_{n \to \infty}\frac{n \hat{\theta}_n (\omega)}{\phi(n)} = \sigma_{m}. \]  

This completes the proof of Theorem \ref{thm:LIL}. 

\section{Proof of Theorem \ref{thm:BH}}\label{sec:BE}

We first give an outline of the proof. 
We prove Theorem \ref{thm:BH} by following the strategy of Bai and Fu \cite{BaiFu1987}. 
We first recall that $\left|P\left(\hat{\theta}_n > \epsilon\right) - P(D_n (\epsilon) > 0)\right|  \le 2P\left(\mathcal{A}_{n}^c \right)$ by the arguments of Section \ref{sec:CLT}. 

The first step is to show \eqref{eq:diff-MLE-D} below, which states that the probability of the localization event $P\left(\mathcal{A}_{n}^c \right)$ decays exponentially fast as $n \to \infty$. 
In Section \ref{sec:CLT}, we have seen that for $i=1,2$, $P(\mathcal{A}_{n,i}^c)$ decays exponentially fast. 
So it remains to control $P\left(\mathcal{A}_{n,3}^c \right)$.  
This is done by two large deviation estimates for $L_n (t)$. 
First, Lemma \ref{lem:interpolation-tail} below controls $\displaystyle \inf_{|t| \ge r} L_n (t)$ by using Lemmas \ref{lem:tilde-F-asymptotics} and \ref{lem:pre-interporation-tail} below. 
Second, Lemma \ref{lem:origin-small} below controls $L_n (0)$ by an exponential Chebyshev bound. 
Combining these bounds, we obtain \eqref{eq:diff-MLE-D}. 

After this, the problem reduces to estimating $P(D_n (\epsilon) > 0)$. 
We center $D(X_i, \epsilon)$ by its mean $G_{m}(\epsilon)$ and then apply two deviation inequalities in Lemmas \ref{lem:Petrov-1} and \ref{lem:Petrov-2} below with its variance $H_m (\epsilon)$. 
The Taylor expansions of $G_m$ and $H_m$ around $0$ in Lemma \ref{lem:GH-epsilon} below together with \eqref{eq:asymp-GH} identify the quadratic rate constant, which matches the upper and lower bounds in \eqref{eq:BH-upper} and \eqref{eq:BH-lower} respectively.

Recall the definition of $F_{m}$ in \eqref{eq:def-F-alpha}. 
Let 
\begin{equation*}\label{eq:def-tilde-F-m}
\widetilde{F}_{m} (t) \coloneqq \exp\left(F_{m}(t)\right). 
\end{equation*}

\begin{Lem}\label{lem:tilde-F-asymptotics}
\[ \lim_{t \to \infty} \frac{\widetilde{F}_{m} (t)}{t^2} = 1.  \]
\end{Lem}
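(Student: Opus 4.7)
The plan is to prove the equivalent assertion $F_m(t) - 2\log t \to 0$ as $t\to\infty$, from which the lemma follows by exponentiation. Writing
\[ F_m(t) - 2\log t = \int_{\mathbb{R}} h_t(x)\,\nu_m(dx), \qquad h_t(x) := \log\!\left(\frac{1+(x-t)^2}{t^2}\right), \]
the integrand $h_t(x)$ converges pointwise to $0$ for every fixed $x$, so the goal is to justify passing to the limit under the integral. The obstruction to a direct application of dominated convergence is that $h_t$ is not bounded below uniformly in $t$: at $x = t$ one has $h_t(t) = -2\log t$, which diverges. I would therefore split the integration into the bulk $A_t := \{|x| \le \sqrt{t}\}$ and the tail $A_t^c$.

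On the bulk, $|x/t| \le t^{-1/2}$, so writing $(1+(x-t)^2)/t^2 = 1/t^2 + (1-x/t)^2$ yields $h_t(x) = O(t^{-1/2})$ uniformly, and hence $\int_{A_t} |h_t|\, d\nu_m = O(t^{-1/2}) \to 0$. On the tail I would estimate the positive and negative parts of $h_t$ separately. For $h_t^+$, the elementary bound $(x-t)^2 \le 2x^2 + 2t^2$ gives $h_t^+(x) \le \log(3+2x^2)$ once $t \ge 1$; this majorant is $\nu_m$-integrable since $m > 1/2$, so dominated convergence combined with $\nu_m(A_t^c) \to 0$ forces $\int_{A_t^c} h_t^+\, d\nu_m \to 0$. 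For $h_t^-$, since $\log(1+(x-t)^2) \ge 0$ one has the crude bound $h_t^-(x) \le 2\log t$, whence $\int_{A_t^c} h_t^-\, d\nu_m \le 2\log t \cdot \nu_m(A_t^c)$.

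The hard step is to show that this last quantity tends to zero, and this is exactly where the heavy-tail assumption $m > 1/2$ plays its decisive role: the standard tail estimate $\nu_m(\{|x|>R\}) \asymp R^{1-2m}$ gives $\nu_m(A_t^c) \asymp t^{(1-2m)/2}$, and since $(1-2m)/2 < 0$ the product $\log t \cdot t^{(1-2m)/2}$ vanishes. Collecting the three contributions yields $F_m(t) - 2\log t \to 0$, and exponentiation completes the proof. The scale $\sqrt{t}$ for the split is chosen so that both the bulk error $t^{-1/2}$ and the tail mass $t^{(1-2m)/2}$ are simultaneously small; any diverging scale that is $o(t)$ would serve equally well.
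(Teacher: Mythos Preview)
Your argument is correct, but it is considerably more elaborate than what the paper does. The paper works with $F_m(t)-\log(1+t^2)$ (which differs from your $F_m(t)-2\log t$ by $o(1)$) and observes that the elementary inequality $1+(a+b)^2\le 2(1+a^2)(1+b^2)$, applied with $(a,b)=(x,-t)$ and $(a,b)=(-x,x-t)$, yields the two-sided bound
\[
\bigl|\log(1+(x-t)^2)-\log(1+t^2)\bigr|\le \log\bigl(2(1+x^2)\bigr),
\]
which is $\nu_m$-integrable since $m>1/2$. A single application of dominated convergence then finishes the proof. Your concern that $h_t$ is ``not bounded below uniformly in $t$'' is a slight misdiagnosis: DCT only needs an $x$-dependent majorant, and in fact the same inequality gives $h_t(x)\ge -\log(2(1+x^2))$ for your $h_t$ as well, so your bulk/tail split is not actually necessary. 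That said, your approach is entirely sound and has the minor advantage of making the role of the tail decay $\nu_m(\{|x|>R\})\asymp R^{1-2m}$ explicit, whereas the paper's proof hides it inside the integrability of $\log(2(1+x^2))$.
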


\begin{proof}
The statement is equivalent to  
\begin{equation}\label{eq:F-asymptotics} 
\lim_{t \to \infty} F_{m}(t) - \log(1+t^2) = 0.  
\end{equation}
We see that 
\[ F_{m}(t) - \log(1+t^2) = c_{m} \int_{\mathbb R} \frac{\log(1+(x-t)^2) - \log(1+t^2)}{(1+x^2)^{m}}  dx  \]
and 
\[ \left|\log(1+(x-t)^2) - \log(1+t^2)\right| \le \log(2(1+x^2)). \]
Now we can apply the Lebesgue convergence theorem. 
\end{proof}

Let 
\begin{equation}\label{eq:def-lambda-m}
\lambda_{m} \coloneqq \frac{1}{2} \left(m - \frac{1}{2} \right) 
\end{equation}
and 
\begin{equation}\label{eq:def-delta-m-r}
\delta_m (r) \coloneqq \frac{F_m (r) - F_m (0)}{4}, \ r > 0. 
\end{equation}
These definitions will be used frequently not only in this section but also in the following section. 

The following corresponds to \cite[(3.15)]{BaiFu1987}. 

\begin{Lem}\label{lem:pre-interporation-tail}
Let $r > 0$. 
Assume that 
$0 < \delta < \min\left\{\lambda_m, \delta_m (r)  \right\}$.   
Then  there exists a positive constant  $c_{m,3}$ depending only on $m$ such that for every $t$ with $|t| > r$ and every $n \ge 1$, 
\[ P(L_n (t) \le F_{m}(0) + \delta) \le \exp\left(- \frac{\delta}{c_{m,3}} (F_{m}(t)-F_{m}(0) -2\delta) n \right). \]
\end{Lem}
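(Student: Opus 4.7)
The plan is to apply Chernoff's inequality combined with a moment generating function (MGF) bound derived from a pointwise algebraic inequality, and then to split into two regimes depending on the magnitude of $F_m(t) - F_m(0)$. For $\lambda > 0$, Markov's inequality gives
\[
P(L_n(t) \le F_m(0)+\delta) \le \exp\bigl(n[\lambda(F_m(0)+\delta) + \psi(\lambda)]\bigr),\qquad \psi(\lambda) := \log E[(1+(X_1-t)^2)^{-\lambda}].
\]
The key bound I will establish is $\psi(\lambda) \le -\lambda F_m(t) + \lambda E_m$ for $\lambda \in (0,(m-1/2)/2]$, with $E_m$ depending only on $m$. To prove it, I use the elementary inequality $(1+x^2)(1+(x-t)^2) \ge (1+t^2)/2$ (from expanding $(1+x^2)(1+(x-t)^2) \ge 1+x^2+(x-t)^2$ and minimizing in $x$) to deduce $(1+(x-t)^2)^{-\lambda} \le 2^\lambda(1+x^2)^\lambda(1+t^2)^{-\lambda}$; integrating against $\nu_m$ gives $\psi(\lambda) \le -\lambda \log(1+t^2) + \lambda \log 2 + \log(c_m/c_{m-\lambda})$. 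Lemma \ref{lem:tilde-F-asymptotics} furnishes a uniform constant $C'_m$ with $\log(1+t^2) \ge F_m(t) - C'_m$, while the identity $(\log c_\mu)' = F_\mu(0)$ (obtained by differentiating $c_\mu^{-1} = \int(1+x^2)^{-\mu}dx$ under the integral) combined with the mean value theorem yields $\log(c_m/c_{m-\lambda}) \le \lambda F_{(2m+1)/4}(0)$ on the relevant range; putting these together gives the stated bound with $E_m := C'_m + \log 2 + F_{(2m+1)/4}(0)$.

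Plugging back into Chernoff gives exponent $-\lambda(F_m(t) - F_m(0) - \delta - E_m)$; since $E_m$ is a fixed constant and not proportional to $\delta$, no single choice of $\lambda$ will work uniformly, so I split on $u := F_m(t) - F_m(0)$. In Regime~I ($u \ge 2E_m$), take $\lambda = \delta$; a short manipulation shows $2(u - \delta - E_m) \ge u - 2\delta$, so the exponent is at most $-\delta(u - 2\delta)/2$, and $c_{m,3} = 2$ suffices. In Regime~II ($u < 2E_m$), $F_m(t)$ is bounded by $M_m := F_m(0) + 2E_m$, a constant depending only on $m$. Here I will refine the MGF bound via the elementary inequalities $e^{-\lambda Y_1} \le 1 - \lambda Y_1 + \tfrac{\lambda^2}{2} Y_1^2$ (valid for $\lambda, Y_1 \ge 0$) and $\log(1+x) \le x$, obtaining $\psi(\lambda) \le -\lambda F_m(t) + \tfrac{\lambda^2}{2} E[Y_1^2]$. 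The variance-type estimate $E[Y_1^2] \le \kappa_m F_m(t)^2$ (derived from $\log(1+(x-t)^2) \le \log 2 + \log(1+x^2) + \log(1+t^2)$ together with $\log(1+t^2) \le F_m(t) + C'_m$, with $\kappa_m$ depending only on $m$) and optimizing over $\lambda$ produce a rate at least $(u-\delta)^2/(2\kappa_m F_m(t)^2)$; using $(u-\delta)^2 \ge 3\delta(u-2\delta)$ (since $u-\delta \ge 3\delta$) and $F_m(t) \le M_m$, this is at least $3\delta(u-2\delta)/(2\kappa_m M_m^2)$, so $c_{m,3} = 2\kappa_m M_m^2/3$ works. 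Finally take $c_{m,3}$ to be the maximum of the two regime constants.

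The principal obstacle is the \emph{additive} linear term $\lambda E_m$ in the MGF bound: it scales like $\lambda$ rather than $\lambda^2$, so it cannot be absorbed uniformly by shrinking $\lambda$ with $\delta$. This forces the two-regime decomposition—in Regime~I the main term $-\lambda F_m(t)$ dominates the error when $\lambda = \delta$ because $F_m(t)$ is large, while in Regime~II a Taylor-expansion refinement produces an $O(\lambda^2)$ error, permitting the CLT-scale optimization of $\lambda$ to recover a rate of the required form.
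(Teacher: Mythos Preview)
Your proof is correct and reaches the same conclusion, but it takes a genuinely different route from the paper's. The paper centers the variable, setting $Z := F_m(t) - \log(1+(X_1-t)^2)$ so that $E[Z]=0$, and then bounds the Taylor remainder of $E[e^{\lambda Z}]$ \emph{uniformly in $t$} for $\lambda$ in a fixed interval $(0,\lambda_m)$: the key step is a moment estimate obtained by splitting on $\{X_1\le t/2\}$ versus $\{X_1>t/2\}$, using Lemma~\ref{lem:tilde-F-asymptotics} on the first piece and the tail decay $P(X_1>t/2)=O(t^{1-2m})$ to absorb the growth of $F_m(t)$ on the second (this is where the restriction $\lambda<m-\tfrac12$ is used). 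This yields $E[e^{\lambda Z}]\le \exp(c_{m,6}\lambda^2)$ uniformly in $t$, after which the single choice $\lambda=\delta/c_{m,6}$ finishes the argument with no case distinction. Your argument instead works with the uncentered $Y_1=\log(1+(X_1-t)^2)\ge 0$ and trades that moment computation for two elementary pointwise inequalities, $(1+x^2)(1+(x-t)^2)\ge (1+t^2)/2$ and $e^{-x}\le 1-x+x^2/2$; the price is that the first of these only gives an $O(\lambda)$ error term $\lambda E_m$ in the log-MGF, which forces the Regime~I/Regime~II split. In short, the paper concentrates the difficulty into one uniform sub-Gaussian bound exploiting the tail of $\nu_m$, while you avoid that analysis at the cost of a structurally longer two-case argument.
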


The following proof is similar to the proof of Bernstein's inequality (\cite[Theorem 2.10]{BLM2013}). 
However the estimates are different, see \eqref{eq:Bernstein-type} below. 
The proof below is easier in the sense that there is no need to consider the Fenchel--Legendre transform. 

\begin{proof}
We assume that $t > r$. 
The proof is the same for the case that $t < -r$. 
We see that 
\[ P(L_n (t) \le F_{m}(0) + \delta) = P\left( \sum_{i=1}^{n} (F_{m}(t) - \log(1+ (X_i - t)^2))  \ge n(F_{m}(t) - F_{m}(0) - \delta) \right). \]
It holds that $F_{m}(t) - F_{m}(0) - \delta \ge  F_{m}(r) - F_{m}(0) - \delta > 0$ by Lemma \ref{lem:min-integral-stable}. 
By the exponential Chebyshev inequality, 
\[ P\left( \sum_{i=1}^{n} (F_{m}(t) - \log(1+ (X_i - t)^2))  \ge n(F_{m}(t) - F_{m}(0) - \delta) \right) \]
\[ \le \left(\exp\left( - \lambda (F_{m}(t) - F_{m}(0) - \delta) \right) E\left[ \exp\left( \lambda (F_{m}(t) - \log(1+ (X_1 - t)^2)) \right) \right] \right)^n \]
for every $\lambda > 0$. 

Assume that $0 < \lambda < m - \dfrac{1}{2}$. 
Then  
\[ E\left[ \exp\left( \lambda \left|F_{m}(t) - \log(1+ (X_1 - t)^2)\right| \right) \right] \le \exp(\lambda F_{m}(t)) E\left[ \left(1+(X_1 -t)^2 \right)^{\lambda}\right] < \infty. \]
Therefore, we can apply the Taylor expansion and obtain that 
\[ E\left[ \exp\left( \lambda (F_{m}(t) - \log(1+ (X_1 - t)^2)) \right) \right] = \sum_{k=0}^{\infty} \frac{\lambda^k}{k!} E\left[ \Psi(X_1, t)^k \right]. \]
where we let 
$\displaystyle \Psi(x,t) \coloneqq  \log \frac{\widetilde{F}_{m} (t)}{1+ (x - t)^2}$.   
Since $E\left[ \Psi(X_1, t)  \right] = 0$, 
\[  \sum_{k=0}^{\infty} \frac{\lambda^k}{k!} E\left[ \Psi(X_1, t) ^k \right] = 1 +  \sum_{k=2}^{\infty} \frac{\lambda^k}{k!} E\left[ \Psi(X_1, t)^k \right]. \]

By Lemma \ref{lem:tilde-F-asymptotics}, 
\[ c_{m,4} \coloneqq \sup_{x \le t/2, t \ge 0} \Psi(x,t) < \infty. \]
Hence, 
\begin{equation}\label{eq:Bernstein-type} 
E\left[ \Psi(X_1,t)^k \right] \le c_{m,4}^k P(X_1 \le t/2) + F_{m}(t)^k  P(X_1 > t/2). 
\end{equation} 
Since 
\[ P(X_1 > t/2) \le c_{m} \int_{t/2}^{\infty} x^{-2m} dx \le c_{m} 4^{m} t^{1-2m},  \]
we see that 
\[ E\left[ \Psi(X_1, t)^k \right] \le c_{m,4}^k + \min\left\{1, \frac{c_{m,5}}{t^{2m -1}} \right\} F_{m}(t)^k, \]
where we let $c_{m,5} \coloneqq c_{m} 4^{m}$. 
Hence, 
\[ \sum_{k=2}^{\infty} \frac{\lambda^k}{k!} E\left[ \Psi(X_1,t)^k \right]  \le \frac{\lambda^2}{2} \left( c_{m,4}^2  \exp(\lambda c_{m,4}) +  \min\left\{1, \frac{c_{m, 5}}{t^{2m -1}} \right\} F_{m}(t)^2 \exp(\lambda F_{m}(t)) \right). \]
Since $0 < \lambda < m - \dfrac{1}{2}$, 
\[ \lim_{t \to \infty}  \frac{F_{m}(t)^2 \exp(\lambda F_{m}(t)) }{t^{2m -1}} = 0, \]
and hence, 
\[ \sup_{t \ge 0}  \min\left\{1, \frac{c_{m,5}}{t^{2m -1}} \right\} F_{m}(t)^2 \exp(\lambda F_{m}(t)) < \infty.  \]

Recall \eqref{eq:def-lambda-m}.  
Then, 
for every $\lambda \in (0, \lambda_{m})$, 
\[ \sum_{k=2}^{\infty} \frac{\lambda^k}{k!} E\left[ \Psi(X_1,t)^k \right] \le \lambda^2 c_{m,6},  \]
where we let 
\[ c_{m,6} \coloneqq \frac{1}{2} \left(c_{m,4}^2  \exp(\lambda_{m} c_{m,4}) +  \sup_{t \ge 0} \min\left\{1, \frac{c_{m,5}}{t^{2m -1}} \right\} F_{m}(t)^2 \exp(\lambda_{m} F_{m}(t))  \right) < \infty. \]
We can assume that $c_{m,6} \ge 1$ because if $c_{m,6} < 1$, then we can replace $c_{m,6}$ with $c_{m,6} +1$. 

Therefore, 
for every $\displaystyle \lambda \in \left(0, \min\left\{\lambda_{m}, \delta_m (r) \right\} \right)$, 
\[ E\left[ \exp\left( \lambda (F_{m}(t) - \log(1+ (X_1 - t)^2)) \right) \right]  \le \exp(\lambda^2 c_{m,6}). \]

If we let $\lambda \coloneqq \delta / c_{m,6}$, then, $0 < \lambda < m - \dfrac{1}{2}$, and, 
\[ \exp\left( - \lambda (F_{m}(t) - F_{m}(0) - \delta) \right) E\left[ \exp\left( \lambda (F_{m}(t) - \log(1+ (X_1 - t)^2)) \right) \right] \]
\[ \le \exp\left(-\frac{\delta}{c_{m,6}} (F_{m}(t) - F_{m}(0) - 2\delta) \right). \]
Thus, the assertion holds for $c_{m,3} = c_{m,6}$. 
\end{proof}

Let $\displaystyle \lambda_{m}(r) \coloneqq \frac{1}{2}\min\left\{\lambda_m, \delta_m (r) \right\}$.

The following corresponds to \cite[(3.21)]{BaiFu1987}\footnote{There is a typo in \cite[(3.21)]{BaiFu1987}. The supremum in \cite[(3.21)]{BaiFu1987} should be the infimum.}. 

\begin{Lem}\label{lem:interpolation-tail}
Let $r> 0$. 
Assume that $\displaystyle 0 < \delta < \lambda_{m}(r)$. 
Then  there exists $N(r, \delta) \in \mathbb{N}$ such that for every $n \ge N(r, \delta)$, 
\[ P\left(\inf_{|t| \ge r} L_n (t) < F_{m}(0) + \delta \right) \le 2 \exp\left(- \frac{\delta^2}{ 8 c_{m,3}} n \right), \]
where $c_{m,3}$ is the constant appearing in Lemma \ref{lem:pre-interporation-tail}. 
\end{Lem}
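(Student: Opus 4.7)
The plan is to promote the pointwise tail bound of Lemma~\ref{lem:pre-interporation-tail} to a uniform bound on $\{|t|\ge r\}$. The key tool is the global Lipschitz estimate $|L_n(t_1)-L_n(t_2)|\le|t_1-t_2|$ that follows from~\eqref{eq:log-Lip-basic}. I split $\{|t|\ge r\}$ into a compact annulus $\{r\le|t|\le R\}$ and a far tail $\{|t|>R\}$, choosing $R=R(m,\delta)$ large enough that the a priori lower bound used in Lemma~\ref{lem:metlower-am-stable} already forces $L_n(t)>F_m(0)+\delta$ for every $|t|>R$ on a high-probability event.

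On the compact annulus I place a grid $\{t_1,\dots,t_K\}$ of mesh $\eta:=\delta$, so $K=O(R/\delta)$. By Lipschitz continuity, the event $\bigl\{\inf_{r\le|t|\le R}L_n(t)<F_m(0)+\delta\bigr\}$ is contained in $\bigl\{\min_{1\le j\le K}L_n(t_j)<F_m(0)+2\delta\bigr\}$. The hypothesis $\delta<\lambda_m(r)$ makes $2\delta$ admissible in Lemma~\ref{lem:pre-interporation-tail}, and because $F_m$ is even with $F_m(t_j)-F_m(0)\ge F_m(r)-F_m(0)>8\delta$ for $|t_j|\ge r$, applying that lemma at level $2\delta$ yields
\[ P\bigl(L_n(t_j)\le F_m(0)+2\delta\bigr)\le\exp\bigl(-8\delta^2 n/c_{m,3}\bigr). \]
A union bound over the grid and absorption of $\log K$ into the exponent for $n$ large enough produces a contribution bounded by $\exp(-\delta^2 n/(8c_{m,3}))$.

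For the tail $\{|t|>R\}$ I revisit the deterministic step in the proof of Lemma~\ref{lem:metlower-am-stable}: applying $\log(1+x^2)+\log(1+y^2)\ge\tfrac{1}{2}\log(1+(x+y)^2)$ with $(x,y)=(X_i-t,-X_i)$ produces $L_n(t)\ge\bigl(\tfrac{1}{2}\log(1+t^2)-\log 2\bigr)\cdot\tfrac{1}{n}\sum_{i=1}^{n}\mathbf{1}_{[-1,1]}(X_i)$, and Hoeffding's inequality gives an exponential bound on $P\bigl(\tfrac{1}{n}\sum_i\mathbf{1}_{[-1,1]}(X_i)<c_{m,1}/2\bigr)$; on its complement, the choice of $R$ guarantees $L_n(t)>F_m(0)+\delta$ uniformly in $|t|>R$. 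Summing the two contributions yields the target bound $2\exp(-\delta^2 n/(8c_{m,3}))$ for $n\ge N(r,\delta)$.

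The main obstacle is the bookkeeping of constants: one must verify that $\delta<\lambda_m(r)$ simultaneously ensures the admissibility of Lemma~\ref{lem:pre-interporation-tail} at the shifted level $2\delta$, the gap $F_m(t_j)-F_m(0)-4\delta\ge 4\delta$ at each grid point, and the compatibility of the Hoeffding rate $c_{m,1}^2/2$ with the target rate $\delta^2/(8c_{m,3})$; the last point may force $N(r,\delta)$ to be enlarged so that the slower of the two exponential rates still dominates the target one.
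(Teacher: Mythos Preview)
Your approach differs from the paper's in how the far range $\{|t|$ large$\}$ is handled. The paper does not split off a separate regime: it places a single infinite grid $\{r+k\delta:k\ge1\}$ on $[r,\infty)$, applies Lemma~\ref{lem:pre-interporation-tail} (at level $2\delta$) at every grid point, and sums the resulting series directly. The point is that the exponent in Lemma~\ref{lem:pre-interporation-tail} grows with $F_m(t)$, and by~\eqref{eq:F-asymptotics} one has $F_m(k\delta+r)-F_m(r)\ge\log(k\delta+r)$ for large $k$, so the tail of the series behaves like $\sum_k(k\delta+r)^{-n\delta/c_{m,3}}$, which is $\le1$ once $n$ is large. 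A single mechanism thus covers both the near and far ranges, and the target exponent $\delta^2/(8c_{m,3})$ emerges from the leading factor $\exp\bigl(-\tfrac{\delta}{c_{m,3}}(F_m(r)-F_m(0)-4\delta)n\bigr)$ together with $F_m(r)-F_m(0)>8\delta$.

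Your annulus argument is correct, and the far-tail device via the deterministic lower bound plus Hoeffding does give exponential decay. But the concern you raise at the end is a genuine obstacle for the lemma \emph{as stated}: the Hoeffding rate is a fixed number $c_{m,1}^2/2$ depending only on $m$, whereas the target rate $\delta^2/(8c_{m,3})$ ranges over an interval as $\delta$ varies in $(0,\lambda_m(r))$, and nothing in your setup guarantees $c_{m,1}^2/2\ge\delta^2/(8c_{m,3})$ for every admissible $\delta$. If that inequality fails, enlarging $N(r,\delta)$ cannot repair it, because the exponential \emph{rate} is too slow, not merely the prefactor. Your route therefore proves the lemma with exponent $\min\{\delta^2/(8c_{m,3}),\,c_{m,1}^2/2\}$, which suffices for every downstream use in the paper (only some $m$-dependent exponential decay is ever needed), but does not recover the constant as written. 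The paper's single-grid approach avoids this because the bound of Lemma~\ref{lem:pre-interporation-tail} itself improves without limit as $|t|\to\infty$, so no auxiliary rate enters the picture.
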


We remark that $r > 0$ can be taken arbitrarily small. 
We first discretize $[r,\infty)$ by the Lipschitz continuity of $L_n(t)$ and then apply Lemma \ref{lem:pre-interporation-tail}. 

\begin{proof}
We show that 
\begin{equation}\label{eq:right-zero}
P\left(\inf_{t \ge r} L_n (t) < F_{m}(0) + \delta\right) \le  \exp\left(- \frac{\delta^2}{ 8 c_{m,3}} n \right). 
\end{equation}

Since $L_n^{\prime}(t) = -2D_n (t)$ and $|D_n (t)| \le 1/2$, 
it holds that 
\[ \left\{\inf_{t \ge r} L_n (t) < F_{m}(0) + \delta \right\} \subset \bigcup_{k \ge 1} \left\{ L_n (k\delta + r) < F_{m}(0) + 2\delta \right\}  \]
and hence, by Lemma \ref{lem:pre-interporation-tail}, 
\[ P\left(\inf_{t \ge r} L_n (t) < F_{m}(0) + \delta \right) \le \sum_{k=1}^{\infty} P\left(L_n (k\delta + r) < F_{m}(0) + 2\delta  \right) \]
\[ \le \sum_{k=1}^{\infty} \exp\left(- \frac{\delta}{c_{m,3}} (F_{m}\left( k\delta + r \right)-F_{m}(0) -4\delta) n \right) \]
\[ = \exp\left(- \frac{\delta}{c_{m,3}} (F_{m}(r)-F_{m}(0) -4\delta) n \right) \sum_{k=1}^{\infty} \exp\left(- \frac{\delta}{c_{m,3}} (F_{m}\left( k\delta + r \right)-F_{m}(r)) n \right) \]
\[ \le \exp\left(- \frac{\delta^2}{8c_{m,3}} n \right) \sum_{k=1}^{\infty} \exp\left(- \frac{\delta}{c_{m,3}} (F_{m}\left( k\delta + r \right)-F_{m}(r)) n \right). \]

By \eqref{eq:F-asymptotics}, 
there exists a positive constant $T_{m, r}$ such that for every $t > T_{m, r}$, $F_{m}(t) \ge F_{m}(r) +  \log t$. 
Hence, there exists $N_{T_{m,r}} \in \mathbb{N}$ such that for every $k > N_{T_{m,r}}$, $F_{m}(k \delta + r) \ge F_{m}(r) +  \log (k \delta + r)$. 
Since 
\[ \sum_{k=1}^{\infty} \exp\left(- \frac{\delta}{c_{m,3}} (F_{m}\left( k\delta + r \right)-F_{m}(r)) n \right) \]
\[ \le N_{T_{m,r}} \exp\left(- \frac{\delta}{c_{m,3}} (F_{m}\left( \delta + r \right)-F_{m}(r)) n \right) +  \sum_{k= N_{T_{m,r}} + 1}^{\infty} (k\delta + r)^{-n\delta / c_{m,3}}. \]
Hence, for large $n$,  
\[  \sum_{k=1}^{\infty} \exp\left(- \frac{\delta}{c_{m,3}} (F_{m}\left( k\delta + r \right)-F_{m}(r)) n \right) \le 1. \]
Thus \eqref{eq:right-zero} holds. 

The case that $t \le -r$ can be dealt with in the same manner. 
\end{proof}

The following corresponds to \cite[(3.25)]{BaiFu1987}\footnote{There is also a typo in \cite[(3.25)]{BaiFu1987}. ``$n^2$'' in the right hand side of the inequality in \cite[(3.25)]{BaiFu1987} should be ``$n \delta^2$''.}. 
Recall the definition of $\lambda_m$ in \eqref{eq:def-lambda-m}. 

\begin{Lem}\label{lem:origin-small}
There exists a positive constant $c_{m,7}$ depending only on $m$ such that for every $\delta \in (0, c_{m,7} \lambda_{m})$ and every $n \ge 1$, 
\[ P\left(L_n (0) \ge F_{m}(0) + \delta \right) \le \exp\left(-\frac{n \delta^2}{2c_{m,7}} \right). \] 
\end{Lem}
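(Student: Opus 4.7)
The plan is to apply the standard Cram\'er--Chernoff method to the i.i.d.\ mean-zero random variables $Z_i := \log(1+X_i^2) - F_m(0)$. The exponential Chebyshev inequality gives, for every $\lambda > 0$ and $n \ge 1$,
\[
P\bigl(L_n(0) \ge F_m(0) + \delta\bigr) \le \exp(-\lambda n \delta)\, \bigl(E[e^{\lambda Z_1}]\bigr)^n,
\]
so the entire problem reduces to a second-order bound on $M(\lambda) := E[e^{\lambda Z_1}]$ for small $\lambda$, followed by an optimisation in $\lambda$.

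The key analytic input, identical to the one used in Lemma \ref{lem:pre-interporation-tail}, is that $E[(1+X_1^2)^\lambda] < \infty$ for every $\lambda < m - \tfrac{1}{2}$. Since $|Z_1| \le \log(1+X_1^2) + F_m(0)$, we have $e^{\lambda|Z_1|} \le e^{\lambda F_m(0)}(1+X_1^2)^{\lambda}$, and dominated convergence shows that $M$ is of class $C^2$ on $[0,\lambda_m)$ with $M(0)=1$ and $M'(0) = E[Z_1] = 0$. Taylor's formula with integral remainder then gives
\[
M(\lambda) = 1 + \int_0^\lambda (\lambda - s)\, E\bigl[Z_1^2 e^{s Z_1}\bigr]\, ds.
\]
Using the pointwise bound $Z_1^2 e^{s Z_1} \le (\log(1+X_1^2) + F_m(0))^2 \, e^{s F_m(0)} (1+X_1^2)^s$ and monotonicity in $s \in [0,\lambda_m]$, the integrand is dominated uniformly by
\[
c_{m,7} := e^{\lambda_m F_m(0)}\, E\bigl[(\log(1+X_1^2) + F_m(0))^2 (1+X_1^2)^{\lambda_m}\bigr] < \infty,
\]
where finiteness follows because $\lambda_m - m = -(m/2 + 1/4)$ makes $c_m\int_{\mathbb R}(\log(1+x^2))^2 (1+x^2)^{\lambda_m - m}\,dx$ convergent under the standing hypothesis $m > 1/2$ (the polynomial decay easily absorbs the logarithmic factor). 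Hence $M(\lambda) \le 1 + c_{m,7}\lambda^2/2 \le \exp(c_{m,7}\lambda^2/2)$ for every $\lambda \in (0,\lambda_m)$.

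Inserting this into the Chernoff estimate and optimising at $\lambda = \delta / c_{m,7}$ yields the claimed bound $\exp(-n\delta^2/(2c_{m,7}))$, and the admissibility condition $\lambda < \lambda_m$ is exactly the hypothesis $\delta < c_{m,7}\lambda_m$. The only step that is not pure bookkeeping is the uniform-in-$\lambda$ control of $M''$ on $[0,\lambda_m]$; once the integrability of $(\log(1+x^2))^2 (1+x^2)^{-(m/2+1/4)}$ is checked, the rest of the argument is a routine exponential-moment computation, and in particular the proof does not require any asymptotic analysis and yields the bound for every $n \ge 1$.
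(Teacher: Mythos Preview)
Your proof is correct and follows essentially the same route as the paper: apply the exponential Chebyshev inequality to the i.i.d.\ centered variables $Z_i = \log(1+X_i^2) - F_m(0)$, bound the moment generating function by a second-order Taylor estimate using the finiteness of $E[Z_1^2 e^{\lambda_m|Z_1|}]$, and optimise at $\lambda = \delta/c_{m,7}$. The only cosmetic difference is that the paper takes $c_{m,7} = E\bigl[Z_1^2 e^{\lambda_m|Z_1|}\bigr]$ directly, whereas you insert the slightly looser bound $|Z_1| \le \log(1+X_1^2) + F_m(0)$ before passing to the supremum in $s$; this yields a larger but still finite constant and changes nothing structurally.
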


\begin{proof}
Assume that $0 < \lambda \le \lambda_{m}$. 
Then, 
by the exponential Chebyshev inequality, 
\[ P(L_n (0) \ge F_{m}(0) + \delta) \le \left( \exp(-\lambda \delta) E\left[ \exp(\lambda (\log(1+X_1^2) - F_{m}(0))) \right] \right)^n. \]

Since $E\left[\log(1+X_1^2) \right] = F_{m}(0)$, 
\[ E\left[ \exp(\lambda (\log(1+X_1^2) - F_{m}(0))) \right]  \le \exp\left( \frac{\lambda^2}{2} c_{m,7}  \right), \]
where we let 
\[ c_{m,7} \coloneqq E\left[ (\log(1+X_1^2) - F_{m}(0)))^2 \exp\left(\lambda_{m} \left|\log(1+X_1^2) - F_{m}(0))\right|  \right) \right]. \]
Now let $\lambda \coloneqq \delta/c_{m,7}$.  
\end{proof}

Let  
\[ c_{m,8} \coloneqq \dfrac{1}{2}\min\left\{\lambda_{m}(r_{m,2}/3), c_{m,7} \lambda_{m} \right\}.\]  
Let $\mathcal{B}_{n,1}$ be the event that $\displaystyle \inf_{|t| \ge r_{m,2}/3} L_n (t) < F_{m}(0) + c_{m,8}$. 
Let $\mathcal{B}_{n,2}$ be the event that $L_n (0) \ge F_{m}(0) + c_{m,8}$. 
Then  $\hat{\theta}_n \in [-r_{m,2}/2, r_{m,2}/2]$ on the event $\mathcal{B}_{n,1} \cap \mathcal{B}_{n,2}$. 
Therefore, 
$$ \mathcal{A}_{n,1} \cap \mathcal{A}_{n,2} \cap \mathcal{B}_{n,1} \cap \mathcal{B}_{n,2} \subset \mathcal{A}_{n}. $$
By Lemma  \ref{lem:lv2-Azuma}, Lemma \ref{lem:lv3-Azuma}, Lemma \ref{lem:interpolation-tail}, and Lemma \ref{lem:origin-small}, 
there exist constants $c_{m,9}, c_{m,10}$ depending only on $m$ such that for every $n \ge 1$, 
\[ P(\mathcal{A}_n^c) \le P(\mathcal{A}_{n,1}^c) + P(\mathcal{A}_{n,2}^c) + P(\mathcal{B}_{n,1}^c)  +P(\mathcal{B}_{n,2}^c) \le c_{m,9} \exp(- c_{m,10} n). \]

For $\epsilon \in (0, r_{m,2}/4)$, 
\[ P(\{\hat{\theta}_n > \epsilon \} \cap \mathcal{A}_n) = P(\{D_n (\epsilon) > 0 \} \cap \mathcal{A}_n)  \]
and hence, 
\begin{equation}\label{eq:diff-MLE-D}
\left|P\left(\hat{\theta}_n > \epsilon\right) - P(D_n (\epsilon) > 0)\right| \le 2P(\mathcal{A}_n^c) \le  2c_{m,9} \exp(- c_{m,10} n), \ n \ge 1. 
\end{equation}

Let 
\[ H_{m}(\epsilon) \coloneqq \textup{Var}(D(X_1,\epsilon)) = E\left[ D(X_1,\epsilon)^2 \right] - G_{m}(\epsilon)^2.  \]

\begin{Lem}\label{lem:GH-epsilon}
It holds that \\
(1) $\displaystyle G_{m}(\epsilon) = -\frac{2m-1}{2(m + 1)} \epsilon + O(\epsilon^2),  \ \epsilon \to +0$. \\
(2) $\displaystyle H_{m}(\epsilon) = \frac{2m -1}{4m (m + 1)} + O(\epsilon), \ \epsilon \to +0$. 
\end{Lem}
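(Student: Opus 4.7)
\bigskip

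\noindent\textbf{Proof plan for Lemma \ref{lem:GH-epsilon}.} The plan is to treat both parts as essentially Taylor expansions around $\epsilon = 0$, using the uniform boundedness of $D(x,t)$ and its $t$-derivatives (already recorded in the paper) to justify differentiation under the integral sign via dominated convergence.

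For part (1), I would start from the identities already established in Section \ref{sec:CLT}: from \eqref{eq:mean-exp-D-zero} we have $G_{m}(0) = 0$, and from \eqref{eq:mean-partial-D} we have $G_{m}^{\prime}(0) = E[\partial_t D(X_1,0)] = -\frac{2m-1}{2(m+1)}$. To promote these into a Taylor expansion with an $O(\epsilon^2)$ remainder, I would show that $G_{m}$ is $C^2$ in a neighborhood of $0$ with uniformly bounded second derivative. Using the Lebesgue convergence theorem and the explicit expression \eqref{eq:2nd-derivative-D}, one gets
\[
G_{m}^{\prime\prime}(t) = \int_{\mathbb R} \partial_t^2 D(x,t)\, \nu_m(dx),
\]
and the bound $\sup_{x,t}|\partial_t^2 D(x,t)| = C_1 < \infty$ (already used right after \eqref{eq:2nd-derivative-D}) gives $|G_m^{\prime\prime}(t)| \le C_1$ for all $t$. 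Taylor's theorem with remainder then yields $G_m(\epsilon) = G_m^{\prime}(0)\epsilon + O(\epsilon^2)$, which is exactly (1).

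For part (2), I would split
\[
H_{m}(\epsilon) - H_{m}(0) = \bigl(E[D(X_1,\epsilon)^2] - E[D(X_1,0)^2]\bigr) - G_{m}(\epsilon)^2,
\]
noting that $H_m(0) = E[D(X_1,0)^2] = \frac{2m-1}{4m(m+1)}$ by \eqref{eq:squared-exp-D}. Part (1) immediately gives $G_m(\epsilon)^2 = O(\epsilon^2)$. For the first difference, observe that $\partial_t(D(x,t)^2) = 2 D(x,t)\,\partial_t D(x,t)$, which is bounded by $1$ in absolute value since $|D| \le 1/2$ and $|\partial_t D| \le 1$. Consequently $|D(x,\epsilon)^2 - D(x,0)^2| \le \epsilon$ uniformly in $x$, and integrating against $\nu_m$ gives $|E[D(X_1,\epsilon)^2] - E[D(X_1,0)^2]| \le \epsilon$. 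Combining the two pieces yields (2).

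I do not anticipate any real obstacle: the key point is that all relevant derivatives of $D(x,t)$ with respect to $t$ are bounded uniformly in $x$, which makes interchange of limit and integral routine. The only place that requires a little care is making sure the remainder in (1) is truly $O(\epsilon^2)$ with a constant depending only on $m$ (the bound on $C_1$ is absolute), and that the linear term in the expansion of $E[D(X_1,\epsilon)^2]$ does not need to be computed explicitly for part (2) since only the $O(\epsilon)$ order is claimed.
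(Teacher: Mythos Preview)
Your proposal is correct and essentially the same as the paper's proof: both rely on the uniform bounds $|\partial_t D|\le 1$ and $|\partial_t^2 D|\le C_1$ together with \eqref{eq:mean-exp-D-zero}, \eqref{eq:squared-exp-D}, \eqref{eq:mean-partial-D} to Taylor-expand. The only cosmetic difference is the order of operations---the paper applies the pointwise estimate \eqref{eq:non-as-estimate} to $D(X_1,\cdot)$ and $D(X_1,\cdot)^2$ and then takes expectation, whereas you take expectation first and then expand $G_m$ (and use a direct Lipschitz bound on $D^2$ for part (2)); both routes are equivalent.
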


\begin{proof}
(1) By \eqref{eq:non-as-estimate},  
\begin{equation}\label{eq:diff-epsilon}
\left|D(X_1, \epsilon) - D(X_1,0) - \epsilon \partial_t D (X_1, 0)\right| \le C_1 \epsilon^2. 
\end{equation}
By \eqref{eq:mean-exp-D-zero} and \eqref{eq:mean-partial-D}, 
\[ E\left[ D(X_1,0) \right] = 0, E\left[\partial_t D (X_1, 0)\right] = - \frac{2m - 1}{2(m + 1)}.  \]
The estimate follows from these equalities and \eqref{eq:diff-epsilon}.

(2) By \eqref{eq:diff-epsilon}, 
there exists a positive constant $C_2$ such that for every $\epsilon \in (0,1)$, 
\begin{equation}\label{eq:diff-epsilon-squared}
\left|D(X_1, \epsilon)^2 - D(X_1,0)^2 - 2 \epsilon D(X_1,0) \partial_t D (X_1, 0)\right| \le C_2 \epsilon^2. 
\end{equation}
Since $D(X_1,0)$ and $\partial_t D (X_1, 0)$ are bounded, 
$D(X_1,0) \partial_t D (X_1, 0)$ is also bounded, and in particular, is integrable. 
By \eqref{eq:squared-exp-D}, 
\[ H_{m}(0) = E\left[ D(X_1,0)^2 \right] =  \frac{2m -1}{4m (m + 1)}. \]
The estimate follows from this equality and \eqref{eq:diff-epsilon-squared}.
\end{proof}

We show (i) of Theorem \ref{thm:BH}. 
We consider the asymptotics of $P(D_n (\epsilon) > 0)$ as $n \to \infty$. 

We first give the upper estimate. 
We remark that $|D(X_i, \epsilon) -   G_{m}(\epsilon)| \le \frac{1}{2} - G_{m}(\epsilon)$ and by Lemma \ref{lem:GH-epsilon}, 
\[ \lim_{\epsilon \to +0}  G_{m}(\epsilon) \left(\frac{1}{2} - G_{m}(\epsilon)\right) = 0,  \]
and, 
\[ \lim_{\epsilon \to +0}  H_{m}(\epsilon) = H_{m}(0) > 0.\] 
Hence, there exists a constant $\epsilon_{m,1} > 0$ depending only on $m$ such that 
for every $\epsilon \in (0, \epsilon_{m,1})$, 
\[ \left| D(X_i, \epsilon) -   G_{m}(\epsilon)\right| \le H_{m}(\epsilon). \]

\begin{Lem}[{Petrov \cite[Lemma 7.1]{Petrov1995}\footnote{The statement is a little different from \cite[Lemma 1]{BaiFu1987}.  In \cite[Lemma 1]{BaiFu1987}, this assertion holds for large $n$, but this is valid for every $n \ge 1$.}}]\label{lem:Petrov-1}
Let $Z_i, i \ge 1$, be i.i.d. random variables such that $|Z_1| \le M$, $P$-a.s., $E[Z_1] = 0$, and $\sigma^2 \coloneqq \textup{Var}(Z_1) > 0$.  
Then, for every $n \ge 1$ and every $x \in [0, \sigma^2/M]$, 
\[ P\left(\sum_{i=1}^{n} Z_i \ge nx \right) \le \exp\left(-\frac{n x^2}{2\sigma^2} \left(1 - \frac{Mx}{2\sigma^2}\right) \right). \]
\end{Lem}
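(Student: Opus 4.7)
The plan is the classical Bernstein/Chernoff derivation. For any $\lambda > 0$, the exponential Markov inequality reduces the problem to bounding the moment generating function of $Z_{1}$:
\[ P\!\left(\sum_{i=1}^{n} Z_{i} \ge nx\right) \le e^{-\lambda n x}\bigl(E[e^{\lambda Z_{1}}]\bigr)^{n}. \]
So the task splits into estimating $E[e^{\lambda Z_{1}}]$ and choosing $\lambda$ well, and the constraint $x \in [0, \sigma^{2}/M]$ should dictate a natural scale.

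For the MGF bound, I would expand in a power series and use $E[Z_{1}] = 0$ to kill the linear term; the boundedness $|Z_{1}| \le M$ gives $|Z_{1}|^{k} \le M^{k-2} Z_{1}^{2}$ for $k \ge 2$, and hence $|E[Z_{1}^{k}]| \le M^{k-2}\sigma^{2}$. Summing the resulting series and using $1 + u \le e^{u}$ yields the standard estimate
\[ E[e^{\lambda Z_{1}}] \le 1 + \frac{\sigma^{2}}{M^{2}}\bigl(e^{\lambda M} - 1 - \lambda M\bigr) \le \exp\!\left(\frac{\sigma^{2}}{M^{2}}\bigl(e^{\lambda M} - 1 - \lambda M\bigr)\right). \]

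I would then set $\lambda := x/\sigma^{2}$. The hypothesis $x \in [0, \sigma^{2}/M]$ is used precisely so that $u := \lambda M = Mx/\sigma^{2} \in [0,1]$, the range in which the elementary inequality
\[ \frac{e^{u} - 1 - u}{u^{2}} \le \frac{1}{2} + \frac{u}{4} \]
holds. Substituting this bound into the log of the MGF estimate gives
\[ -n\lambda x + \frac{n\sigma^{2}}{M^{2}}\bigl(e^{\lambda M} - 1 - \lambda M\bigr) \le -\frac{nx^{2}}{\sigma^{2}} + \frac{nx^{2}}{2\sigma^{2}}\!\left(1 + \frac{Mx}{2\sigma^{2}}\right) = -\frac{nx^{2}}{2\sigma^{2}}\!\left(1 - \frac{Mx}{2\sigma^{2}}\right), \]
which is exactly the exponent in the statement.

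The only genuine obstacle is the elementary inequality $(e^{u} - 1 - u)/u^{2} \le \tfrac{1}{2} + \tfrac{u}{4}$ on $[0,1]$. I would verify it by setting $g(u) := \tfrac{1}{2} + \tfrac{u}{4} - (e^{u} - 1 - u)/u^{2}$, checking $g(0) = 0$ and $g'(0) = \tfrac{1}{12} > 0$, and then noting that $g$ is unimodal on $[0,1]$ with $g(1) = \tfrac{3}{4} - (e - 2) > 0$, which forces $g \ge 0$ throughout. An alternative is to make the classical optimal choice $\lambda = M^{-1}\log(1 + Mx/\sigma^{2})$, which converts the task into proving $(1+u)\log(1+u) - u \ge \tfrac{u^{2}}{2}(1 - \tfrac{u}{2})$ on $[0,1]$; this follows by grouping consecutive terms of the alternating Taylor expansion of the left-hand side and is perhaps the slickest route.
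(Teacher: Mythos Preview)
The paper does not prove this lemma at all; it is simply quoted from Petrov \cite[Lemma~7.1]{Petrov1995} and used as a black box. Your proposal supplies a complete and correct proof via the standard Bernstein--Chernoff route: exponential Markov, the MGF bound $E[e^{\lambda Z_1}]\le 1+\tfrac{\sigma^2}{M^2}(e^{\lambda M}-1-\lambda M)$ from $|E[Z_1^k]|\le M^{k-2}\sigma^2$, the choice $\lambda=x/\sigma^2$, and the elementary inequality $(e^u-1-u)/u^2\le \tfrac12+\tfrac u4$ on $[0,1]$. The final algebra is correct and yields exactly the stated exponent.

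One small point: your justification of the elementary inequality via ``$g$ is unimodal on $[0,1]$'' is asserted rather than proved. A cleaner one-line verification is to note that for $u\in[0,1]$,
\[
e^u-1-u=\sum_{k\ge 2}\frac{u^k}{k!}\le \frac{u^2}{2}+u^3\sum_{k\ge 3}\frac{1}{k!}=\frac{u^2}{2}+u^3\!\left(e-\tfrac52\right)<\frac{u^2}{2}+\frac{u^3}{4},
\]
since $e-\tfrac52<\tfrac14$. Your alternative via the optimal $\lambda=M^{-1}\log(1+Mx/\sigma^2)$ is also perfectly fine and is in fact the argument Petrov gives.
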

 
By this lemma, 
it holds that for every $\epsilon \in (0, \epsilon_{m,1})$ and every  $n \ge 1$, 
\begin{align}\label{eq:Petrov-1} 
P(D_n (\epsilon) > 0) &= P\left( \sum_{i=1}^{n} D(X_i, \epsilon) -   G_{m}(\epsilon) > -nG_{m}(\epsilon) \right) \notag\\
&\le \exp\left( -\frac{n G_{m}(\epsilon)^2}{2H_{m}(\epsilon)} \left(1+ \frac{G_{m}(\epsilon)}{2 H_{m} (\epsilon)} \right)  \right). 
\end{align}

By Lemma \ref{lem:GH-epsilon}, 
\begin{equation}\label{eq:asymp-GH} 
\frac{G_{m}(\epsilon)^2}{H_{m}(\epsilon)} \left(1+ \frac{G_{m}(\epsilon)}{2 H_{m} (\epsilon)} \right) \sim \frac{m (2m-1)}{m + 1} \epsilon^2, \ \epsilon \to +0, 
\end{equation}
in particular, 
\[ \lim_{\epsilon \to +0} \frac{G_{m}(\epsilon)^2}{H_{m}(\epsilon)} \left(1+ \frac{G_{m}(\epsilon)}{2 H_{m} (\epsilon)} \right) = 0. \]
By this, \eqref{eq:Petrov-1}, and \eqref{eq:diff-MLE-D}, 
it holds that there exists $\epsilon_{m,2} > 0$ such that for every $\epsilon \in (0, \epsilon_{m,2})$, there exists $N_{\epsilon}$ such that for every $n \ge N_{\epsilon}$, 
\[ P\left(\hat{\theta}_n > \epsilon\right) \le 2 \exp\left( -\frac{n G_{m}(\epsilon)^2}{2H_{m}(\epsilon)} \left(1+ \frac{G_{m}(\epsilon)}{2 H_{m} (\epsilon)} \right)  \right). \]

Hence, for every $\epsilon \in (0, \epsilon_{m,2})$, 
\[ \limsup_{n \to \infty} \frac{\log P\left(\hat{\theta}_n > \epsilon\right)}{n} \le -\frac{G_{m}(\epsilon)^2}{2H_{m}(\epsilon)} \left(1+ \frac{G_{m}(\epsilon)}{2 H_{m} (\epsilon)} \right). \]
By this, Lemma \ref{lem:GH-epsilon}, and \eqref{eq:asymp-GH}, 
\[ \limsup_{\epsilon \to +0} \frac{1}{\epsilon^2} \left( \limsup_{n \to \infty} \frac{\log P\left(\hat{\theta}_n  > \epsilon\right)}{n} \right) \le -\frac{m (2m-1)}{2(m + 1)}.  \]

The same argument is applicable to $P\left(\hat{\theta}_n  < -\epsilon\right)$ and we obtain \eqref{eq:BH-upper}.  

We next give the lower estimate \eqref{eq:BH-lower}. 
By Lemma \ref{lem:GH-epsilon},
\[ \lim_{\epsilon \to +0} G_{m}(\epsilon) = 0 \textup{  and } \lim_{\epsilon \to +0} H_{m}(\epsilon) = E\left[D(X_1,0)^2 \right] > 0. \] 

\begin{Lem}[{Petrov \cite[Lemma 7.2]{Petrov1995}\footnote{The statement is a little different from \cite[Lemma 7.2]{Petrov1995}, however, we can show this assertion in the same manner as in the proof of \cite[Lemma 7.2]{Petrov1995}.}}]\label{lem:Petrov-2}
Let $Z_i, i \ge 1$, be i.i.d. random variables such that $|Z_1| \le M$, $P$-a.s., $E[Z_1] = 0$, and $\sigma^2 \coloneqq \textup{Var}(Z_1) > 0$.  
Then, for every $\eta > 0$, there exists $r > 0$ such that for every $x \in [0, r]$, there exists $N$ such that for every $n \ge N$, 
\[ P\left(\sum_{i=1}^{n} Z_i \ge nx \right) \ge \exp\left(-\frac{n x^2}{2\sigma^2} \left(1 + \eta \right) \right). \]
\end{Lem}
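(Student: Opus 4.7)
The plan is to use the classical Cramér exponential tilting (Esscher transform), followed by the central limit theorem under the tilted law, to extract a lower bound whose rate matches $x^2/(2\sigma^2)$ to leading order.

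First, I would set up the tilt. Because $|Z_1| \le M$, the cumulant generating function $\varphi(\lambda) := \log E[\exp(\lambda Z_1)]$ is $C^\infty$ on $\mathbb{R}$ and strictly convex, with $\varphi(0) = 0$, $\varphi'(0) = 0$, $\varphi''(0) = \sigma^2$. A Taylor expansion gives $\varphi(\lambda) = \sigma^2 \lambda^2/2 + O(\lambda^3)$ as $\lambda \to 0$. For small $x > 0$, solve $\varphi'(\lambda) = x$ for the unique $\lambda = \lambda(x)$; then $\lambda(x) = x/\sigma^2 + O(x^2)$, and the Legendre transform satisfies $I(x) := \lambda x - \varphi(\lambda) = x^2/(2\sigma^2) + O(x^3)$. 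Define a tilted product measure $\tilde{P}_\lambda$ on $\mathbb{R}^n$ with density $\prod_{i=1}^{n} \exp(\lambda Z_i - \varphi(\lambda))$ relative to $P^{\otimes n}$; under $\tilde{P}_\lambda$ the $Z_i$ are i.i.d. with mean $x$ and variance $\tilde{\sigma}^2(\lambda) := \varphi''(\lambda) \to \sigma^2$ as $\lambda \to 0$.

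Second, apply the change of measure. Writing $S_n = \sum_{i=1}^n Z_i$, one has
\[
P(S_n \ge nx) = E_{\tilde{P}_\lambda}\!\left[\exp\!\bigl(-\lambda S_n + n\varphi(\lambda)\bigr)\, \mathbf{1}_{\{S_n \ge nx\}}\right].
\]
Restricting the indicator to $\{0 \le S_n - nx \le b\sqrt{n}\}$ for a fixed $b > 0$ and lower-bounding the exponential on that event gives
\[
P(S_n \ge nx) \ge \exp\!\bigl(-n I(x) - \lambda b \sqrt{n}\bigr)\cdot \tilde{P}_\lambda\!\bigl(0 \le S_n - nx \le b\sqrt{n}\bigr).
\]
Under $\tilde{P}_\lambda$, the sequence $(S_n - nx)/\sqrt{n}$ obeys a CLT with limiting variance $\tilde{\sigma}^2(\lambda)$, and since the third absolute moment of $Z_1 - x$ under $\tilde{P}_\lambda$ is bounded uniformly for $\lambda$ in a compact neighborhood of $0$, the probability above converges to $\Phi(b/\tilde{\sigma}(\lambda)) - 1/2$, which is bounded below by a strictly positive constant independent of $\lambda$ for $\lambda$ in a small interval around $0$.

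Third, balance the terms. Given $\eta > 0$, I would choose $r > 0$ small enough that $I(x) \le (1 + \eta/3) x^2/(2\sigma^2)$ and $\lambda(x) \le 2x/\sigma^2$ for all $x \in [0, r]$, and fix $b := \sigma$. For each such $x$ the boundary correction $\lambda b \sqrt{n} = O(x\sqrt{n})$ is dominated by $(\eta/3)\, n x^2/(2\sigma^2)$ once $n \ge c_\eta / x^2$, and the CLT factor contributes an additive $O(1)$ to the exponent that is absorbed by enlarging $N$ further. Summing the three $\eta/3$ contributions yields $P(S_n \ge nx) \ge \exp(-(1+\eta)\, nx^2/(2\sigma^2))$. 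The main obstacle is precisely this interaction between the boundary correction $\lambda b \sqrt{n}$ and the target rate $n x^2$: the former forces $N$ to scale like $x^{-2}$, but this is compatible with the statement, which allows $N = N(\eta, x)$. A secondary technical point is that the CLT lower bound on $\tilde{P}_\lambda$ must be uniform in $\lambda$ over $[0, \lambda(r)]$; this follows from Berry--Esseen together with the continuity of $\tilde{\sigma}^2(\lambda)$ and of the third moment in $\lambda$.
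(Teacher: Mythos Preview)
The paper does not supply its own proof of this lemma; it simply cites Petrov's Lemma~7.2 with a footnote that the same argument carries over. Your exponential-tilting (conjugate-distribution) argument is exactly the method Petrov uses, and your handling of the boundary correction $\lambda b\sqrt{n}$ together with the $x$-dependent threshold $N \asymp x^{-2}$ is correct and matches the quantifier structure of the statement.

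One cosmetic remark: at $x=0$ the displayed inequality would require $P(S_n\ge 0)\ge 1$, which fails for any nondegenerate mean-zero summand; your argument (like Petrov's) really establishes the bound for $x\in(0,r]$, and that is all the paper needs, since it is applied with $x=-G_m(\epsilon)>0$.
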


By this lemma, for every $\eta > 0$, there exists $\epsilon_{\eta} > 0$ depending on $m$ and $\eta$ such that for every $\epsilon \in (0, \epsilon_{\eta})$, 
there exists $N_{\eta, \epsilon, 1}  \in \mathbb{N}$ such that for every $n \ge N_{\eta, \epsilon, 1}$, 
\begin{equation}\label{eq:Petrov-2}  
P(D_n (\epsilon) > 0) \ge \exp\left( -\frac{n G_{m}(\epsilon)^2}{2H_{m}(\epsilon)} (1+\eta) \right). 
\end{equation}

In the same manner as in the upper bound,  
it holds that there exists $\epsilon_{\eta, 2} > 0$ depending on $\eta$ such that for every $\epsilon \in (0, \epsilon_{\eta,2})$, there exists $N_{\eta, \epsilon,2} \in \mathbb{N}$ such that for every $n \ge N_{\eta, \epsilon, 2}$, 
\[ P\left(\hat{\theta}_n > \epsilon\right) \ge \frac{1}{2} \exp\left( -\frac{n G_{m}(\epsilon)^2}{2H_{m}(\epsilon)} (1+\eta) \right). \]

Hence, for every $\epsilon \in (0, \epsilon_{\eta,2})$, 
\[ \liminf_{n \to \infty} \frac{\log P\left(\hat{\theta}_n > \epsilon\right)}{n} \ge -\frac{G_{m}(\epsilon)^2}{2H_{m}(\epsilon)} (1+\eta).  \]
By this and Lemma \ref{lem:GH-epsilon},  letting $\eta \to +0$, 
\[ \liminf_{\epsilon \to +0} \frac{1}{\epsilon^2} \left( \liminf_{n \to \infty} \frac{\log P\left(\hat{\theta}_n > \epsilon\right)}{n} \right) \ge -\frac{m (2m-1)}{2(m + 1)}.  \]
 
The same argument is applicable to $P\left(\hat{\theta}_n  < -\epsilon\right)$ and we obtain \eqref{eq:BH-lower}.  
Thus the proof of (i) of Theorem \ref{thm:BH} is completed. 

Now we show (ii)  of Theorem \ref{thm:BH}, but the proof is almost identical to the proof of (i).

By \eqref{eq:Petrov-1}, 
it holds that for large  $n$, 
\[ P(D_n (\epsilon/a_n) > 0) \le \exp\left( -\frac{n G_{m}(\epsilon/a_n)^2}{2H_{m}(\epsilon/a_n)} \left(1+ \frac{G_{m}(\epsilon/a_n)}{2 H_{m} (\epsilon/a_n)} \right)  \right). \] 
By Lemma \ref{lem:GH-epsilon}, 
\[ \lim_{n \to \infty} a_n^2 \frac{G_{m}(\epsilon/a_n)^2}{H_{m}(\epsilon/a_n)} \left(1+ \frac{G_{m}(\epsilon/a_n)}{2 H_{m} (\epsilon/a_n)} \right) = \frac{m (2m-1)}{m + 1}. \]
Therefore, we obtain that 
\begin{equation}\label{eq:MDP-wts-upper}
\limsup_{n \to \infty} \frac{\log P\left(D_n (\epsilon/a_n)  > 0 \right)}{n/a_n^2} \le -\frac{m (2m-1)}{2(m + 1)}\epsilon^2. 
\end{equation}

By \eqref{eq:Petrov-2} and Lemma \ref{lem:GH-epsilon}, 
we obtain that 
\begin{equation}\label{eq:MDP-wts-lower}
\liminf_{n \to \infty} \frac{\log P\left(D_n (\epsilon/a_n)  > 0 \right)}{n/a_n^2} \ge -\frac{m (2m-1)}{2(m + 1)}\epsilon^2. 
\end{equation}

\eqref{eq:MDP-wts-upper} and \eqref{eq:MDP-wts-lower} imply that 
\begin{equation*}\label{eq:MDP-wts}
\lim_{n \to \infty} \frac{\log P\left(D_n (\epsilon/a_n)  > 0 \right)}{n/a_n^2} = -\frac{m (2m-1)}{2(m + 1)}\epsilon^2. 
\end{equation*} 

By this and \eqref{eq:diff-MLE-D}, 
\[  \lim_{n \to \infty} \frac{\log P\left(\hat{\theta}_n  > \epsilon/a_n \right)}{n/a_n^2} = -\frac{m (2m-1)}{2(m + 1)}\epsilon^2. \]

$P\left(\hat{\theta}_n  < -\epsilon/a_n \right)$ can be dealt with in the same manner. 
Thus the proof of (ii) of Theorem \ref{thm:BH} is completed. 

\begin{Rem}
(i) Let $K(\cdot | \cdot)$ be the Kullback-Leibler divergence. 
Then, by computations, 
\[ K\left( \textup{PVII}_{m} (\theta_1, 1) | \textup{PVII}_{m} (\theta_2, 1) \right) = m(F_m (\theta_1 - \theta_2) - F_m (0)). \]
Let 
\[ b(\epsilon, \theta) \coloneqq \inf\left\{ K\left( \textup{PVII}_{m} (\theta^{\prime}, 1) | \textup{PVII}_{m} (\theta, 1) \right) \middle| |\theta^{\prime} - \theta| > \epsilon\right\}. \]
Since $F_m$ is symmetric and $t \mapsto F_m (|t|)$ is increasing, 
$b(\epsilon, \theta) = m(F_m (\epsilon) - F_m (0))$. 
Since $F_m^{\prime} = -2G_m$, 
\[ \lim_{\epsilon \to +0} \frac{b(\epsilon, \theta)}{\epsilon^2} = \frac{m (2m-1)}{2(m + 1)} = \frac{1}{I(\theta)}.  \]
(ii) For the case that $m=1$, the Bahadur efficiency for the {\it joint} estimation of the location and the scale is established in \cite[Theorem 4]{AOO2022AISM} when both the location and the scale are unknown. 
Recently, Akahira \cite{Akahira2025} showed that the MLE of the location parameter is first order large deviation efficient, which implies the Bahadur efficiency.\\ 
(iii) Gao \cite{Gao2001} obtained moderate deviation results for the maximum likelihood estimator in a more general framework under certain regular conditions. 
Our model does not satisfy the conditions because the likelihood equation $D_n (t) = 0$ has multiple roots. 
\end{Rem}

\section{Proof of Theorem \ref{thm:integrability}}\label{sec:integrability}

We first give an outline of the proof. 
We estimate the tail probability of $\hat{\theta}_n$ by comparing $L_n (t)$ at large $|t|$ with its expectation at the true parameter $t=0$, which is $F_m (0)$. 
The probability of the event $\{\hat{\theta}_n > r\}$ is controlled by the decomposition \eqref{eq:tail-fund} below. 
More specifically, if $\hat{\theta}_n > r$, then either $L_n (t)$ becomes unexpectedly small somewhere on $[r,\infty)$, or $L_n (0)$ becomes unexpectedly large.

In order to bound this probability, we control the two terms in  \eqref{eq:tail-fund} separately by modifying the assertions in Section \ref{sec:BE}. 
Lemma \ref{lem:pre-interporation-tail-2} below gives a polynomial-type estimate for the lower tail of $L_n (t)$ at a fixed large $t$ by the exponential Chebyshev inequality. 
Then Lemma \ref{lem:interpolation-tail-2} below upgrades this pointwise estimate to the whole tail region $[r, \infty)$ and controls the first term of  \eqref{eq:tail-fund}, by discretizing the region and using the Lipschitz bound for $L_n^{\prime}(t)$. 
Finally Lemma \ref{lem:origin-small-2} below provides an upper-tail bound for $L_n (0)$ and controls the second term of  \eqref{eq:tail-fund}.

By symmetry, we deal with $P\left(\hat{\theta}_n > r\right)$. 
We see that for every $r > 0$ and every $\delta > 0$, 
\begin{equation}\label{eq:tail-fund} 
P\left(\hat{\theta}_n > r\right) \le P\left( \inf_{t \ge r} L_n (t) < F_{m}(0) + \delta  \right) + P\left(L_n (0) \ge F_{m}(0) + \delta \right). 
\end{equation}

First, we give a lemma similar to Lemma \ref{lem:pre-interporation-tail}. 
The proof differs in part.  
Recall the definition of $\lambda_m$ in  \eqref{eq:def-lambda-m}. 

\begin{Lem}\label{lem:pre-interporation-tail-2}
There exist two constants  $r_{m,3}$ and $c_{m,11}$ such that for every $t \ge r_{m,3}$ and every $n \ge 1$, 
\[ P\left(L_n (t) \le F_{m}(0) + \frac{F_{m}(t) - F_{m}(0)}{2} \right) \le c_{m,11} t^{-n\lambda_{m}}. \]
\end{Lem}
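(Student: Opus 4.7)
The plan is to follow the exponential-Chebyshev strategy of Lemma \ref{lem:pre-interporation-tail} but with a \emph{fixed} Chebyshev parameter $\lambda$, rather than the optimizing choice $\lambda=\delta/c_{m,6}$. The reason is that here the effective deviation $\delta_t:=(F_m(t)-F_m(0))/2$ grows like $\log t$ by Lemma \ref{lem:tilde-F-asymptotics}, so the optimizing $\lambda$ would exceed $\lambda_m$ and the Taylor-based moment-generating-function bound of Lemma \ref{lem:pre-interporation-tail} is no longer available. Instead, I will take $\lambda=2\lambda_m=m-1/2$ throughout, which is permissible because $(1+(X_1-t)^2)^{-\lambda}\le 1$ for every $\lambda\ge 0$, so no integrability constraint on the likelihood integrand is imposed.

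Applying exponential Chebyshev with this $\lambda$ and substituting $\delta_t$ gives
\[
P\!\left(L_n(t)\le F_m(0)+\delta_t\right) \le A_t^n, \qquad A_t := e^{\lambda_m F_m(0)}\,\widetilde{F}_m(t)^{\lambda_m}\,E\!\left[(1+(X_1-t)^2)^{-(m-1/2)}\right].
\]
The main technical step is to establish the asymptotic $A_t\sim e^{\lambda_m F_m(0)}\,t^{-2\lambda_m}$ as $t\to\infty$. The factor $\widetilde{F}_m(t)^{\lambda_m}\sim t^{2\lambda_m}$ comes directly from Lemma \ref{lem:tilde-F-asymptotics}, and for the expectation I would normalize and rewrite
\[
t^{2m-1}\,E\!\left[(1+(X_1-t)^2)^{-(m-1/2)}\right] = c_m\!\int\!\left(\frac{t^2}{1+(x-t)^2}\right)^{\!m-1/2}\!(1+x^2)^{-m}\,dx,
\]
then apply dominated convergence. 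The integrand converges pointwise to $(1+x^2)^{-m}$, and on the bulk $|x|\le t/2$ the estimate $(1+(x-t)^2)\ge t^2/4$ yields the uniform dominator $4^{m-1/2}(1+x^2)^{-m}$, so the DCT gives the correct limit $c_m\int(1+x^2)^{-m}\,dx=1$. The two remaining regions vanish: $\{|x|>t/2,\ |x-t|>t/2\}$ because the same dominator is available and the density tail $\int_{|x|>t/2}(1+x^2)^{-m}\,dx\to 0$, while the diagonal strip $\{|x-t|\le t/2\}$ is handled by the crude bound $(1+x^2)^{-m}\le(t^2/4)^{-m}$ and the substitution $y=x-t$, after which the remaining kernel integral $\int_{-t/2}^{t/2}(1+y^2)^{-(m-1/2)}\,dy$ is controlled by a three-way split on $m>1$, $m=1$, and $1/2<m<1$.

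Once this asymptotic gives $A_t\le 2e^{\lambda_m F_m(0)}\,t^{-2\lambda_m}$ for $t$ past some threshold $r_0$, I write
\[
A_t^n \le \left(2e^{\lambda_m F_m(0)}\,t^{-\lambda_m}\right)^{\!n}\,t^{-n\lambda_m},
\]
and enlarge $r_{m,3}\ge r_0$ so that $r_{m,3}^{\lambda_m}\ge 2e^{\lambda_m F_m(0)}$; the bracketed factor is then at most $1$ for every $t\ge r_{m,3}$, and the lemma follows with $c_{m,11}=1$. The main obstacle I expect is the dominated-convergence step above, and in particular the diagonal strip $|x-t|\le t/2$, where the heavy-tailed density is only of order $t^{-2m}$ but the kernel $(1+(x-t)^2)^{-(m-1/2)}$ has a unit-scale peak at $x=t$, forcing the $m$-dependent case analysis to extract decay.
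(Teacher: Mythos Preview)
Your proof is correct and follows the same route as the paper: exponential Chebyshev with the fixed exponent $\lambda=2\lambda_m=m-\tfrac12$, then Lemma~\ref{lem:tilde-F-asymptotics} for the factor $\widetilde F_m(t)^{\lambda_m}$ and a domain split to control $E\bigl[(1+(X_1-t)^2)^{-2\lambda_m}\bigr]$. The paper's treatment of this expectation is more economical than yours, however: it splits only into $\{X_1\ge t/2\}$ (where the kernel is $\le 1$ and $P(X_1\ge t/2)=O(t^{1-2m})$) and $\{X_1< t/2\}$ (where the kernel is $\le(1+t^2/4)^{-2\lambda_m}=O(t^{1-2m})$), giving the $O$-bound that already suffices for the lemma without your dominated-convergence step or the three-way case split on $m$.
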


\begin{proof}
As in Lemma \ref{lem:pre-interporation-tail}, by the exponential Chebyshev inequality, 
\[ P\left(L_n (t) \le F_{m}(0) +  \frac{F_{m}(t) - F_{m}(0)}{2}\right) \]
\[ = P\left( \sum_{i=1}^{n} (F_{m}(t) - \log(1+ (X_i - t)^2))  \ge n\left(F_{m}(t) - F_{m}(0) -  \frac{F_{m}(t) - F_{m}(0)}{2}\right) \right) \]
\[ \le \left(\exp\left(2\lambda_{m} \left(F_{m}(0) +  \frac{F_{m}(t) - F_{m}(0)}{2}\right) \right) E\left[ \left(1+ (X_1 - t)^2\right)^{-2\lambda_{m}} \right] \right)^n. \]

It holds that 
\[ E\left[ \left(1+ (X_1 - t)^2 \right)^{-2\lambda_{m}} \right] \]
\[ = E\left[ \left(1+ (X_1 - t)^2\right)^{-2\lambda_{m}}, \ X_1 \ge t/2 \right]  + E\left[ \left(1+ (X_1 - t)^2 \right)^{-2\lambda_{m}}, \ X_1 < t/2  \right]   \]
\[ \le P(X_1 \ge t/2) + \left(1+ \frac{t^2}{4} \right)^{-\lambda_{m}} = O(t^{1-2m}), \ \ t \to \infty.  \]

By \eqref{eq:F-asymptotics}, 
\[ \exp\left(2\lambda_{m} \left(F_{m}(0) +  \frac{F_{m}(t) - F_{m}(0)}{2}\right) \right) = O\left(t^{8\lambda_{m}/3}\right), \ \ t \to \infty. \]

Therefore, 
\[ \exp\left(2\lambda_{m} \left(F_{m}(0) +  \frac{F_{m}(t) - F_{m}(0)}{2}\right) \right) E\left[ \left(1+ (X_1 - t)^2 \right)^{-2\lambda_{m}} \right] = O(t^{-4\lambda_{m}/3}), \ t \to \infty.  \]
This completes the proof. 
\end{proof}

Next, we give a lemma similar to Lemma \ref{lem:interpolation-tail}. 
The proof is also similar. 
Recall the definition of $\delta_m (r)$ in \eqref{eq:def-delta-m-r}. 

\begin{Lem}\label{lem:interpolation-tail-2}
There exist two positive constants  $r_{m,4}$ and $c_{m,12}$ and $N_{m,1} \in \mathbb{N}$ depending only on $m$ such that 
for every $r \ge r_{m,4}$ and every $n \ge N_{m,1}$, 
\[ P\left(\inf_{t \ge r} L_n (t) < F_{m}(0) + \delta_m (r) \right) \le c_{m,12} r^{-n\lambda_{m}/2}, \]
\end{Lem}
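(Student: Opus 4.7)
The plan is to follow the proof of Lemma~\ref{lem:interpolation-tail}, but with the small fixed threshold $\delta$ replaced by a threshold $\delta_r := (F_m(r) - F_m(0))/4$ that grows with $r$, and with Lemma~\ref{lem:pre-interporation-tail-2} used in place of Lemma~\ref{lem:pre-interporation-tail}. First I would record that $L_n$ is $1$-Lipschitz on $\mathbb{R}$, since $L_n^{\prime}(t) = -2 D_n(t)$ and $|D(x,t)| \le 1/2$. Setting $t_k := r + k\Delta_r$ with $\Delta_r := \delta_r$ for $k \ge 0$, the Lipschitz bound gives
$$\left\{\inf_{t \ge r} L_n(t) < F_m(0) + \delta_r \right\} \subset \bigcup_{k \ge 0} \left\{L_n(t_k) < F_m(0) + 2\delta_r \right\}.$$
Since $F_m$ is increasing on $[0,\infty)$ by Lemma~\ref{lem:min-integral-stable} and $t_k \ge r$, we have $2\delta_r \le (F_m(t_k) - F_m(0))/2$, so each event on the right is contained in the event treated by Lemma~\ref{lem:pre-interporation-tail-2}. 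Fixing $r_{m,4} \ge r_{m,3}$ makes the hypothesis of that lemma valid at every $t_k$.

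A union bound then yields
$$P\left(\inf_{t \ge r} L_n(t) < F_m(0) + \delta_r \right) \le c_{m,11} \sum_{k=0}^{\infty} (r + k\Delta_r)^{-n\lambda_m}.$$
The remaining task is to bound this series by $c_{m,12}\, r^{-n\lambda_m/2}$ for all $r \ge r_{m,4}$ and $n \ge N_{m,1}$. I would split off the $k=0$ term and estimate the rest by the integral
$$\int_0^{\infty} (r + x\Delta_r)^{-n\lambda_m}\, dx = \frac{r^{1-n\lambda_m}}{\Delta_r(n\lambda_m - 1)},$$
obtaining, for $n\lambda_m > 1$, an upper bound proportional to $r^{-n\lambda_m} + r^{1-n\lambda_m}/\Delta_r$. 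By \eqref{eq:F-asymptotics} we have $F_m(r) \sim 2\log r$, so $\Delta_r \ge c \log r$ for large $r$, and both terms are then dominated by $r^{-n\lambda_m/2}$ once $n\lambda_m/2 \ge 1$, i.e.\ as soon as $n \ge N_{m,1}$ for a suitably large $N_{m,1}$ depending only on $m$.

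The main technical obstacle is making the replacement of the exponent $-n\lambda_m$ by $-n\lambda_m/2$ cleanly: the integral comparison introduces an extra factor $r/\Delta_r$, and the key arithmetic step is to verify that the logarithmic growth of $\Delta_r$, together with taking $n$ large enough, absorbs this factor into the looser exponent while keeping the constant $c_{m,12}$ depending only on $m$. Note that, unlike Lemma~\ref{lem:interpolation-tail}, only the one-sided infimum over $t \ge r$ is needed in \eqref{eq:tail-fund}, so no symmetric argument over $t \le -r$ enters here.
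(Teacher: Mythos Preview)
Your proposal is correct and follows essentially the same route as the paper: the same $1$-Lipschitz discretization of $[r,\infty)$ with step $\delta_r$, the same reduction to Lemma~\ref{lem:pre-interporation-tail-2} via the monotonicity of $F_m$ on $[0,\infty)$, and the same integral comparison to sum $\sum_k (r+k\delta_r)^{-n\lambda_m}$. The only cosmetic differences are that the paper starts the grid at $k=1$ rather than $k=0$ and records the integral bound as $c_{m,11}\,\delta_r\, r^{1-n\lambda_m}$ before invoking $\delta_r = O(\log r)$; your version with the factor $1/\Delta_r$ is in fact the sharper (and correct) outcome of the substitution, and either form is absorbed by the weaker exponent $-n\lambda_m/2$ once $n \ge 2/\lambda_m$.
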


\begin{proof}
Since $|L_n^{\prime}(t)| \le 1$, 
\[ \left\{\inf_{t \ge r} L_n (t) < F_{m}(0) + \delta_m (r)  \right\} \subset \bigcup_{k \ge 1} \left\{ L_n (k\delta_m (r) + r) < F_{m}(0) + 2\delta_m (r) \right\}  \]
and hence, by Lemma \ref{lem:pre-interporation-tail-2}, 
for every $n \ge 2/\lambda_{m}$ and every $r \ge r_{m,4}$, 
\[ P\left(\inf_{t \ge r} L_n (t) < F_{m}(0) + \delta_m (r) \right) \le \sum_{k=1}^{\infty} P\left(L_n (k\delta_m (r) + r) < F_{m}(0) + 2\delta_m (r)  \right) \]
\[  \le \sum_{k=1}^{\infty} P\left(L_n (k\delta_m (r) + r) < F_{m}(0) +  \frac{F_{m}(k\delta_m (r) + r) - F_{m}(0)}{2} \right) \]
\[ \le c_{m,11} \sum_{k=1}^{\infty} (k\delta_m (r) + r)^{-n\lambda_{m}} \le c_{m,11} \delta_m (r) \int_{r}^{\infty} x^{-n\lambda_{m}} dx \le c_{m,11} \delta_m (r) r^{1-n\lambda_{m}}.   \]
By \eqref{eq:F-asymptotics},  
$\delta_m (r) = O(\log r), r \to \infty$ and we have the assertion. 
\end{proof}

Finally, we give a lemma similar to Lemma \ref{lem:origin-small}. 

\begin{Lem}\label{lem:origin-small-2}
There exist positive constants $r_{m,5}$ and  $c_{m,13}$ such that for every $r > r_{m,5}$ and every $n \ge 1$, 
\[ P\left(L_n (0) \ge F_{m}(0) + \delta_m (r) \right) \le r^{-n c_{m,13}}. \] 
\end{Lem}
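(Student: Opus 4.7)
The main obstacle --- and the reason one cannot simply invoke Lemma \ref{lem:origin-small} --- is that here $\delta_r \to \infty$, whereas Lemma \ref{lem:origin-small} gives Gaussian tails only in the small-$\delta$ regime $\delta \in (0, c_{m,7}\lambda_m)$. The $\lambda$-scales-with-$\delta$ choice used there forces the dual parameter to grow with $r$, but we cannot take $\lambda \ge m - \tfrac{1}{2}$ because the moment generating function $E[(1+X_1^2)^\lambda]$ diverges there. The resolution is to decouple $\lambda$ from $\delta_r$: freeze $\lambda$ at some value strictly below $m - \tfrac{1}{2}$ and let the $e^{-\lambda \delta_r}$ factor alone produce the decay in $r$.

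Concretely, set $\lambda := \lambda_m = \tfrac{1}{2}(m - \tfrac{1}{2})$, so that
\[ M_m := e^{-\lambda_m F_m(0)}\, E\bigl[(1+X_1^2)^{\lambda_m}\bigr] \]
is a finite constant depending only on $m$: the density of $\nu_m$ decays like $|x|^{-2m}$ and $2\lambda_m - 2m < -1$, so the integral converges. Applying the exponential Chebyshev inequality to $\sum_{i=1}^{n} (\log(1+X_i^2) - F_m(0))$, whose common mean vanishes, I would obtain
\[ P\bigl(L_n(0) \ge F_m(0) + \delta_r\bigr) \le \bigl(M_m\, e^{-\lambda_m \delta_r}\bigr)^n. \]

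It then remains to convert $e^{-\lambda_m \delta_r}$ into polynomial decay in $r$. By \eqref{eq:F-asymptotics} one has $F_m(r) = \log(1+r^2) + o(1)$ as $r \to \infty$, so
\[ \delta_r = \frac{F_m(r) - F_m(0)}{4} \ge \frac{\log r}{2} - C_m \]
for all $r$ large enough, where $C_m$ depends only on $m$. Hence $M_m\, e^{-\lambda_m \delta_r} \le C_m'\, r^{-\lambda_m/2}$. Choosing $r_{m,5}$ so that $C_m'\, r^{-\lambda_m/4} \le 1$ for every $r \ge r_{m,5}$ makes this last quantity bounded by $r^{-\lambda_m/4}$, and the lemma follows with $c_{m,13} := \lambda_m/4$. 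Beyond the integrability gap pointed out in the first paragraph, nothing here is delicate: the $r$-dependent decay is harvested entirely from the divergence of $\delta_r$ and the asymptotic $F_m(r) \sim \log(1+r^2)$, not from any optimization in $\lambda$.
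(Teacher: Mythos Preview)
Your proof is correct and follows essentially the same strategy as the paper: freeze $\lambda$ at $\lambda_m$ (the paper uses $\lambda_m' = \min\{1,\lambda_m\}$), bound the moment generating function by a constant depending only on $m$, and extract the polynomial decay in $r$ from $e^{-\lambda_m \delta_r}$ via the asymptotic $\delta_r \sim \tfrac{1}{2}\log r$ coming from \eqref{eq:F-asymptotics}. The only cosmetic difference is that the paper reuses the quadratic bound $E[e^{\lambda Y}]\le \exp(\tfrac{\lambda^2}{2}c_{m,7})$ from the proof of Lemma~\ref{lem:origin-small}, whereas you bound $E[(1+X_1^2)^{\lambda_m}]$ directly by integrability; your route is, if anything, slightly more direct.
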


\begin{proof}
Let $ c_{m,7}$ be the constant as in the proof of Lemma \ref{lem:origin-small}.  
Assume that $0 < \lambda \le \lambda_{m}$. 
Then, 
by the exponential Chebyshev inequality, 
for every $n \ge 1$, 
\[ P(L_n (0) \ge F_{m}(0) + \delta_m (r)) \le \left( \exp\left(-\lambda \delta_m (r) + \frac{\lambda^2}{2} c_{m,7}  \right) \right)^n.  \]
By \eqref{eq:F-asymptotics}, 
there exists a positive constant $r_{m,5}$ such that for every $r > r_{m,5}$, $2c_{m,7} \le \log r \le \delta_m (r)$.  
Let $\lambda_{m}^{\prime} \coloneqq \min\{1, \lambda_{m}\}$. 
Thus, for every $r > r_{m,5}$ and every $n \ge 1$, 
\[ P(L_n (0) \ge F_{m}(0) + \delta_m (r)) \le \left( \exp\left( \left(-\lambda_{m}^{\prime} + \frac{(\lambda_{m}^{\prime})^2}{4}\right) \delta_m (r)  \right) \right)^n \le r^{-n c_{m,13}},   \]
where we let $c_{m,13} \coloneqq \lambda_{m}^{\prime} - \dfrac{(\lambda_{m}^{\prime})^2}{4} > 0$. 
\end{proof}

By applying \eqref{eq:tail-fund} to $\delta = \delta_m (r)$, by Lemma \ref{lem:interpolation-tail-2} and Lemma \ref{lem:origin-small-2},
there exist positive constants $r_{m}$ and  $N_{m} \in \mathbb{N}$ depending only on $m$ such that 
for every $r \ge r_{m}$ and every $n \ge N_{m}$, 
\[ P\left( \hat{\theta}_n  > r \right) \le r^{-c_{m,13} n}. \]
$P\left(\hat{\theta}_n < -r\right)$ can be dealt with in the same manner, and we obtain Theorem \ref{thm:integrability}.

\section{Proof of Theorem \ref{thm:AE}}\label{sec:AE}

We first give an outline of the proof. 
The proof is a uniform-integrability argument based on the asymptotic normality (Theorem \ref{thm:CLT}) and the Bahadur efficiency (two estimates used in the proof of Theorem \ref{thm:BH}) and the tail bound (Theorem \ref{thm:integrability}).
We show the lower and upper bounds separately.
The lower bound is an easy consequence of Theorem \ref{thm:CLT}. 

In order to obtain the upper bound, it suffices to show that the family $\left\{ \left(\sqrt{n} \hat{\theta}_n\right)^2 \right\}_n$ is uniformly integrable, which is reduced to \eqref{eq:AE-diff} below.
The key step is to rewrite the tail contribution as an integral of tail probabilities as in \eqref{eq:integral-AE-diff} and then split the integral into three regimes as in \eqref{eq:split-integral-into-three}. 
For the small-to-moderate region and the intermediate region, we use \eqref{eq:diff-MLE-D} and \eqref{eq:Petrov-1} used in the proof of Theorem \ref{thm:BH} respectively. 
For the far tail region, we apply Theorem \ref{thm:integrability}. 

For $M > 0$, 
let $\phi_M (x) \coloneqq \min\{x^2, M^2\}$. 
This is bounded and continuous on $\mathbb{R}$. 
Recall that $\varphi_{m}$ is the density function of the distribution $\displaystyle N\left(0,\dfrac{m + 1}{m (2m -1)} \right)$. 
By Theorem \ref{thm:CLT}, 
\begin{equation}\label{eq:AE-1st} 
\lim_{n \to \infty} E\left[ \phi_M \left(\sqrt{n} \hat{\theta}_n \right) \right]  = \int_{\mathbb R} \phi_M (x) \varphi_{m} (x) dx. 
\end{equation}

Since $x^2 \ge \phi_M (x)$, 
\[ \liminf_{n \to \infty} n E\left[ \left(\hat{\theta}_n\right)^2\right] \ge \int_{\mathbb R} \phi_M (x) \varphi_{m} (x) dx.  \]
By the monotone convergence theorem, 
\begin{equation}\label{eq:AE-inf} 
\liminf_{n \to \infty} n E\left[ \left(\hat{\theta}_n\right)^2\right] \ge \int_{\mathbb R} x^2 \varphi_{m} (x) dx = \frac{m + 1}{m (2m -1)} . 
\end{equation}
 
We will show that 
\begin{equation}\label{eq:AE-sup} 
\limsup_{n \to \infty} n E\left[ \left(\hat{\theta}_n\right)^2\right] \le \frac{m + 1}{m (2m -1)}.  
\end{equation}

By \eqref{eq:AE-1st} and the monotone convergence theorem, 
\[ \lim_{M \to \infty} \lim_{n \to \infty} E\left[ \phi_M \left(\sqrt{n} \hat{\theta}_n \right) \right]  = \int_{\mathbb R} x^2 \varphi_{m} (x) dx = \frac{m + 1}{m (2m -1)}. \]

Hence it suffices to show that
\begin{equation}\label{eq:AE-diff}
\limsup_{M \to \infty} \left(\limsup_{n \to \infty} E\left[ \left( \sqrt{n} \hat{\theta}_n \right)^2 - \phi_M \left(\sqrt{n} \hat{\theta}_n \right) \right]\right) = 0.
\end{equation}

By Fubini's theorem for non-negative measurable functions and the change of variables $t= \sqrt{s}$, 
we obtain that 
\begin{align}\label{eq:integral-AE-diff} 
E\left[ \left( \sqrt{n} \hat{\theta}_n \right)^2 - \phi_M \left(\sqrt{n} \hat{\theta}_n \right) \right] &= E\left[ \left( \sqrt{n} \hat{\theta}_n \right)^2 - M^2, \ \left|  \sqrt{n} \hat{\theta}_n \right| \ge M \right] \notag\\
&= 2\int_{M}^{\infty} t P\left( \left|  \sqrt{n} \hat{\theta}_n \right| > t \right) dt \notag\\
&= 2n \int_{M/\sqrt{n}}^{\infty} s P\left( \left| \hat{\theta}_n \right| > s \right) ds.  
\end{align}

By symmetry, we consider $P\left( \hat{\theta}_n  > s \right)$.  
By \eqref{eq:asymp-GH}, 
there exists $\epsilon_{m,3} \in (0, r_m)$ such that for every $\epsilon \in (0, 2\epsilon_{m,3})$, 
\begin{equation}\label{eq:G-over-H} 
\frac{G_{m}(\epsilon)^2}{H_{m}(\epsilon)} \left(1+ \frac{G_{m}(\epsilon)}{2 H_{m} (\epsilon)} \right) \ge \frac{m(2m-1)}{4(m+ 1)} \epsilon^2.  
\end{equation}
 
 Now we decompose the last integral in \eqref{eq:integral-AE-diff} into three parts: 
 \begin{equation}\label{eq:split-integral-into-three}  
 \int_{M/\sqrt{n}}^{\infty} =  \int_{M/\sqrt{n}}^{\epsilon_{m,3}} + \int_{\epsilon_{m,3}}^{r_{m}+1} + \int_{r_{m}+1}^{\infty},  
 \end{equation} 
 where $r_{m}$ is the constant in Theorem \ref{thm:integrability}.

By \eqref{eq:G-over-H}, \eqref{eq:Petrov-1}, and \eqref{eq:diff-MLE-D}, 
there exist two positive constants $c_{m,14}, c_{m,15}$ and $N_{m,2} \in \mathbb{N}$ depending only on $m$ such that 
for every $n \ge N_{m,2}$ and $s \in (0, 2\epsilon_{m,3})$, 
\begin{equation}\label{eq:BH-quantitative} 
P\left(\hat{\theta}_n > s\right) \le \exp\left(- \frac{m (2m -1)}{4(m + 1)} s^2 n\right) + c_{m,14} \exp(- c_{m,15} n).  
\end{equation}
Therefore, for $n \ge N_{m,2}$, 
\[ 2n \int_{M/\sqrt{n}}^{\epsilon_{m,3}} s P\left( \hat{\theta}_n  > s\right)  ds \]
\[ \le  \int_{M/\sqrt{n}}^{\epsilon_{m,3}} 2ns \exp\left(- \frac{m (2m -1)}{4(m + 1)} n s^2 \right) ds + n \epsilon_{m,3}^2 c_{m,14} \exp(- c_{m,15} n) \]
\[ \le \frac{4(m + 1)}{m (2m -1)}  \exp\left(- \frac{m (2m -1)}{4(m + 1)} M^2 \right)  + n \epsilon_{m,3}^2 c_{m,14} \exp(- c_{m,15} n). \]
Hence, 
\begin{equation}\label{eq:1st-integral-estimate} 
\limsup_{n \to \infty} 2n \int_{M/\sqrt{n}}^{\epsilon_{m,3}} s P\left( \hat{\theta}_n  > s\right)  ds \le \frac{4(m + 1)}{m (2m -1)}  \exp\left(- \frac{m (2m -1)}{4(m + 1)} M^2 \right). 
\end{equation} 

Since 
\[ 2n \int_{\epsilon_{m,3}}^{r_{m} + 1} s P\left( \hat{\theta}_n  > s\right)  ds \le 2 (r_{m} + 1)^2 n P\left( \hat{\theta}_n  > \epsilon_{m,3} \right), \]
by applying \eqref{eq:BH-quantitative} to $s = \epsilon_{m,3}$, 
\begin{equation}\label{eq:2nd-integral-estimate}  
\limsup_{n \to \infty} 2n \int_{\epsilon_{m,3}}^{r_{m} + 1} s P\left( \hat{\theta}_n  > s\right) ds = 0. 
\end{equation}  

By Theorem \ref{thm:integrability}, 
for large $n$, 
\[ n \int_{r_{m} + 1}^{\infty}  s P\left( \hat{\theta}_n  > s\right)  ds \le  n \int_{r_{m} + 1}^{\infty}  s^{1-nc_{m}} ds \le \frac{n}{c_{m} n - 2} (r_{m} + 1)^{2-n c_{m}}.   \]
Hence, 
\begin{equation}\label{eq:3rd-integral-estimate}  
\limsup_{n \to \infty} 2n \int_{r_{m} + 1}^{\infty} s P\left( \hat{\theta}_n  > s\right)  ds = 0. 
\end{equation}  

By \eqref{eq:1st-integral-estimate}, \eqref{eq:2nd-integral-estimate}, and \eqref{eq:3rd-integral-estimate}, 
\[ \limsup_{n \to \infty} 2n \int_{M/\sqrt{n}}^{\infty} s P\left( \hat{\theta}_n  > s\right)  ds \le \frac{4(m + 1)}{m (2m -1)}  \exp\left(- \frac{m (2m -1)}{4(m + 1)} M^2 \right). \]
The same estimate holds for $P\left( \hat{\theta}_n  < -s \right)$. 
Since the right hand side converges to $0$ as $M \to \infty$, \eqref{eq:AE-diff} holds. 

Thus we obtain \eqref{eq:AE-inf} and \eqref{eq:AE-sup}, and the proof of Theorem \ref{thm:AE} is completed. 
We provide numerical evidence for Theorem \ref{thm:AE}  in Subsection \ref{subsec:simulation-AE} below.

 \begin{Rem}
(i) The variance of the maximum likelihood estimator of the parameter $m$ was dealt with by Taylor's unpublished manuscript \cite{Taylor1980}. 
\cite[pp.396--399]{JKB1995} discusses the parameter estimation other than the location. \\ 
(ii) By symmetry, we strongly expect that $\hat{\theta}_n$ is unbiased, that is, $E\left[\hat{\theta}_n \right] = \theta$ for each $n$ such that $\hat{\theta}_n$ is integrable. 
However, to the best of our knowledge, no rigorous proof of this fact has been established. 
We provide numerical evidence for this in Subsection \ref{subsec:unbiased} below. 
 \end{Rem}

\section{Simulation study}\label{sec:SS}

We perform simulation studies using R to illustrate the properties of the maximum likelihood estimator. 
We consider parameter values $m = 0.1 \cdot k$ for $6 \le k \le 15$ and sample sizes $n=10, 50, 100, 500, 1000$. 
In the tables below, the rows correspond to $m$ and the columns to the sample size $n$, unless otherwise stated. 
In the simulations, we let $\theta = 0$. 
We use R version 4.5.2. 
For each  pair $(m, n)$, we generate $N = 10^7$ samples  of size $n$ using the {\tt rpearsonVII()} function in the package `PearsonDS', and  compute the corresponding MLEs.
For the optimization, we use the {\tt nlminb()} function with the sample median as the starting value.

\subsection{Bias}\label{subsec:unbiased}

We compute $\left|E\left[\hat{\theta}_n - \theta \right]\right|$. 
We approximate $E\left[\hat{\theta}_n - \theta \right]$ by $\displaystyle \frac{1}{N} \sum_{i=1}^{N} Z_i$, where $(Z_i)_i$ are i.i.d. random variables distributed as  $\hat{\theta}_n - \theta$.

\begin{table}[H]
\centering
\begin{tabular}{|c|ccccc|}
\hline
 $m \backslash n$ & 10 & 50 & 100 & 500 & 1000 \\
\hline
0.6 & 2.3  & $3.7 \cdot 10^{-5}$  & $8.5 \cdot 10^{-5}$ & $1.1 \cdot 10^{-4}$  & $4.4 \cdot 10^{-5}$   \\
0.7 & $7.8 \cdot 10^{-4}$ & $1.7 \cdot 10^{-5}$ & $1.2 \cdot 10^{-4}$ & $2.6 \cdot 10^{-5}$ & $2.9 \cdot 10^{-6}$  \\
0.8 & $3.3 \cdot 10^{-5}$ & $2.9 \cdot 10^{-5}$ & $1.9 \cdot 10^{-5}$ & $3.2 \cdot 10^{-5}$ & $5.6 \cdot 10^{-6}$  \\
0.9 & $7.3 \cdot 10^{-5}$ & $7.1 \cdot 10^{-5}$ & $5.2 \cdot 10^{-5}$ & $7.2 \cdot 10^{-6}$ & $3.9 \cdot 10^{-5}$  \\
1.0 & $1.8 \cdot 10^{-4}$ & $5.2 \cdot 10^{-5}$ & $6.2 \cdot 10^{-5}$ & $4.5 \cdot 10^{-6}$ & $2.3 \cdot 10^{-5}$ \\
1.1 & $8.6 \cdot 10^{-6}$ & $6.5 \cdot 10^{-5}$ & $2.0 \cdot 10^{-5}$ & $5.2 \cdot 10^{-6}$ & $1.2 \cdot 10^{-5}$ \\
1.2 & $6.0 \cdot 10^{-5}$ & $6.9 \cdot 10^{-5}$ & $4.3 \cdot 10^{-5}$ & $1.4 \cdot 10^{-5}$ & $7.6 \cdot 10^{-6}$   \\
1.3 & $3.7 \cdot 10^{-6}$ & $3.0 \cdot 10^{-6}$ & $8.9 \cdot 10^{-5}$ & $4.5 \cdot 10^{-6}$ & $3.2 \cdot 10^{-6}$   \\
1.4 & $5.7 \cdot 10^{-5}$ & $1.8 \cdot 10^{-5}$ & $2.7 \cdot 10^{-5}$ & $6.3 \cdot 10^{-6}$ & $4.7 \cdot 10^{-6}$  \\
1.5 & $4.2 \cdot 10^{-5}$ & $2.8 \cdot 10^{-5}$ & $6.3 \cdot 10^{-6}$ & $1.6 \cdot 10^{-5}$ & $1.5 \cdot 10^{-6}$  \\
\hline
\end{tabular}
\caption{Simulated values of $\left|E\left[\hat{\theta}_n - \theta\right]\right|$.}\label{tab:UN}
\end{table}

This table suggests that $\hat{\theta}_n$ is {\it not} integrable for $m=0.6$ and $n=10$. 
In this case, it is interesting to compute $P\left(\hat{\theta}_n - \theta > r\right)$ for large $r$. 
We approximate this probability by the proportion of simulated values which are larger than $r$ among the $N = 10^7$ simulated observations. 

\begin{table}[H]
\centering
\begin{tabular}{|c|ccccccc|}
\hline
$r$ & $10^3$ & $5 \cdot 10^3$ & $10^4$ & $5 \cdot 10^4$ & $10^5$ & $5 \cdot 10^5$ & $10^6$ \\
 & $-1.22$ & $-1.08$ & $-1.02$ & $-0.94$ & $-0.94$ & $-0.95$ & $-0.96$  \\
\hline
\end{tabular}
\caption{Simulated values of $\dfrac{\log P(\hat{\theta}_n - \theta > r)}{\log r}$. }\label{tab:tail}
\end{table}

This table suggests that there exists $C > 0$ such that 
$P\left(\hat{\theta}_n - \theta > r\right) \ge Cr^{-1}$ for large $r$, which implies  $\hat{\theta}_n$ is not integrable.

\subsection{Asymptotic normality}\label{subsec:AN-simulation}

Recall that $\varphi_{m}$ is the density of the normal distribution $N\left(0,\dfrac{m + 1}{m (2m -1)} \right)$. 
We consider the Kolmogorov--Smirnov metric: 
\[ \Delta_n \coloneqq \sup_{x \in \mathbb{R}} \left|P\left(\hat{\theta}_n - \theta \le x\right) - \int_{-\infty}^{x} \varphi_m (t) dt \right|. \]
We approximate $P\left(\hat{\theta}_n - \theta \le x\right)$ by the empirical cumulative distribution function $\displaystyle \frac{1}{N} \sum_{i=1}^{N} {\bf 1}_{\{Z_i \le x\}}$ where $(Z_i)_i$ are i.i.d. random variables distributed as  $\hat{\theta}_n - \theta$.  
We recall the Dvoretzky-Kiefer-Wolfowitz-Massart inequality \cite[Theorem 1.32]{Dudley2014}: 
\[ P\left(  \sup_{x \in \mathbb{R}} \left|\frac{1}{N} \sum_{i=1}^{N} {\bf 1}_{\{Z_i \le x\}} - P\left(\hat{\theta}_n - \theta \le x\right)  \right| > \epsilon \right) \le 2\exp(-2N \epsilon^2). \] 

\begin{table}[H]
\centering
\begin{tabular}{|c|ccccc|}
\hline
 $m \backslash n$ & 10 & 50 & 100 & 500 & 1000 \\
\hline
0.6 & $1.1 \cdot 10^{-1}$ & $2.4 \cdot 10^{-2}$ & $1.2 \cdot 10^{-2}$ & $2.4 \cdot 10^{-3}$ & $1.3 \cdot 10^{-3}$ \\
0.7 & $5.3 \cdot 10^{-2}$ & $1.1 \cdot 10^{-2}$ & $5.5 \cdot 10^{-3}$ & $1.2 \cdot 10^{-3}$ & $5.8 \cdot 10^{-4}$ \\
0.8 & $3.4 \cdot 10^{-2}$ & $6.9 \cdot 10^{-3}$ & $3.5 \cdot 10^{-3}$ & $7.9 \cdot 10^{-4}$ & $4.7 \cdot 10^{-4}$ \\
0.9 & $2.5 \cdot 10^{-2}$ & $5.0 \cdot 10^{-3}$ & $2.5 \cdot 10^{-3}$ & $6.4 \cdot 10^{-4}$ & $3.5 \cdot 10^{-4}$ \\
1.0 & $1.9 \cdot 10^{-2}$ & $4.0 \cdot 10^{-3}$ & $2.0 \cdot 10^{-3}$ & $6.4 \cdot 10^{-4}$ & $2.9 \cdot 10^{-4}$ \\
1.1 & $1.5 \cdot 10^{-2}$ & $3.1 \cdot 10^{-3}$ & $1.5 \cdot 10^{-3}$ & $4.2 \cdot 10^{-4}$ & $3.1 \cdot 10^{-4}$ \\
1.2 & $1.3 \cdot 10^{-2}$ & $2.6 \cdot 10^{-3}$ & $1.4 \cdot 10^{-3}$ & $3.0 \cdot 10^{-4}$ & $2.7 \cdot 10^{-4}$ \\
1.3 & $1.1 \cdot 10^{-2}$ & $2.3 \cdot 10^{-3}$ & $1.2 \cdot 10^{-3}$ & $3.4 \cdot 10^{-4}$ & $3.2 \cdot 10^{-4}$ \\
1.4 & $9.3 \cdot 10^{-3}$ & $1.9 \cdot 10^{-3}$ & $9.7 \cdot 10^{-4}$ & $2.8 \cdot 10^{-4}$ & $2.1 \cdot 10^{-4}$  \\
1.5 & $8.2 \cdot 10^{-3}$ & $1.6 \cdot 10^{-3}$ & $9.3 \cdot 10^{-4}$ & $3.8 \cdot 10^{-4}$ & $2.4 \cdot 10^{-4}$  \\
\hline
\end{tabular}
\caption{Simulated values of $\Delta_n$. }\label{tab:AN}
\end{table}

\subsection{Confidence intervals}\label{subsec:CI}

We consider the pivotal quantity $\sqrt{n}\left(\hat{\theta}_n - \theta\right)$ of the model.  
Let $z_{\beta}$ denote the upper $\beta$-quantile, that is, the value satisfying $P\left(\sqrt{n} (\hat{\theta}_n-\theta) \ge z_{\beta}\right) = \beta$ for $0 < \beta < \dfrac{1}{2}$. 
We report the values of $z_{\alpha/2}$ for $\alpha = 0.1, 0.05, 0.01$.
We approximate $z_{\alpha/2}$ by sorting $N = 10^7$ MLEs and using the {\tt quantile} function in R. 
Using $z_{\alpha/2}$, we obtain the $100(1-\alpha)\%$ confidence interval for the location parameter $\theta$: 
\[ \left[\hat{\theta}_n - \frac{z_{\alpha/2}}{\sqrt{n}}, \hat{\theta}_n + \frac{z_{\alpha/2}}{\sqrt{n}}\right].\]

In the following tables, the column labeled $\infty$ reports the upper $\alpha/2$ quantile of the limiting normal distribution $N\left(0,\dfrac{m+1}{m(2m-1)} \right)$ given by Theorem~\ref{thm:CLT}. 

\begin{table}[H]
\centering
\begin{tabular}{|c|ccccc|c|}
\hline
 $m \backslash n$ & 10 & 50 & 100 & 500 & 1000 & $\infty$ \\
\hline
0.6 & 19.60 & 7.06 & 6.46 & 6.08 & 6.05 & 6.01 \\
0.7 &  6.34 & 4.34 & 4.19 & 4.08 & 4.06 & 4.05 \\
0.8 &  4.09 & 3.33 & 3.25 & 3.20 & 3.19 & 3.19 \\
0.9 &  3.17 & 2.75 & 2.71 & 2.68 & 2.68 & 2.67 \\
1.0 &  2.64 & 2.38 & 2.35 & 2.33 & 2.33 & 2.33 \\
1.1 &  2.29 & 2.11 & 2.10 & 2.08 & 2.08 & 2.07 \\
1.2 &  2.04 & 1.91 & 1.90 & 1.89 & 1.88 & 1.88 \\
1.3 &  1.85 & 1.75 & 1.74 & 1.73 & 1.73 & 1.73 \\
1.4 &  1.70 & 1.62 & 1.61 & 1.61 & 1.61 & 1.61 \\
1.5 &  1.58 & 1.52 & 1.51 & 1.50 & 1.50 & 1.50 \\
\hline
\end{tabular}
\caption{Simulated values of $z_{\alpha/2}$ ($\alpha = 0.1$). }\label{tab:CI1}
\end{table}

\begin{table}[H]
\centering
\begin{tabular}{|c|ccccc|c|}
\hline
 $m \backslash n$ & 10 & 50 & 100 & 500 & 1000 & $\infty$ \\
\hline
0.6 & 37.96 & 9.14 & 7.96 & 7.29 & 7.22 & 7.16 \\
0.7 &  9.46 & 5.34 & 5.07 & 4.87 & 4.85 & 4.83 \\
0.8 &  5.57 & 4.04 & 3.91 & 3.82 & 3.80 & 3.80 \\
0.9 &  4.12 & 3.33 & 3.25 & 3.20 & 3.19 & 3.18 \\
1.0 &  3.35 & 2.87 & 2.82 & 2.78 & 2.78 & 2.77 \\
1.1 &  2.87 & 2.54 & 2.51 & 2.48 & 2.48 & 2.47 \\
1.2 &  2.53 & 2.29 & 2.27 & 2.25 & 2.25 & 2.24 \\
1.3 &  2.28 & 2.10 & 2.08 & 2.06 & 2.06 & 2.06 \\
1.4 &  2.09 & 1.94 & 1.93 & 1.92 & 1.92 & 1.91 \\
1.5 &  1.93 & 1.82 & 1.80 & 1.79 & 1.79 & 1.79 \\
\hline
\end{tabular}
\caption{Simulated values of $z_{\alpha/2}$ ($\alpha = 0.05$). }\label{tab:CI2}
\end{table}

\begin{table}[H]
\centering
\begin{tabular}{|c|ccccc|c|}
\hline
 $m \backslash n$ & 10 & 50 & 100 & 500 & 1000 & $\infty$ \\
\hline
0.6 & 162.34 & 15.48 & 11.47 &  9.73 &  9.57 & 9.41 \\
0.7 &  21.55 &  7.66 &  6.91 &  6.45 &  6.39 & 6.35 \\
0.8 &  10.48 &  5.58 &  5.27 &  5.04 &  5.01 & 4.99 \\
0.9 &   6.99 &  4.53 &  4.35 &  4.21 &  4.20 & 4.18 \\
1.0 &   5.31 &  3.87 &  3.75 &  3.67 &  3.65 & 3.64 \\
1.1 &   4.35 &  3.41 &  3.33 &  3.27 &  3.26 & 3.25 \\
1.2 &   3.74 &  3.07 &  3.01 &  2.96 &  2.95 & 2.95 \\
1.3 &   3.29 &  2.80 &  2.75 &  2.72 &  2.71 & 2.71 \\
1.4 &   2.97 &  2.59 &  2.55 &  2.52 &  2.52 & 2.51 \\
1.5 &   2.71 &  2.41 &  2.38 &  2.36 &  2.35 & 2.35 \\
\hline  
\end{tabular}
\caption{Simulated values of $z_{\alpha/2}$ ($\alpha = 0.01$).}\label{tab:CI3}
\end{table}

In the case  $m=1$, \cite[Section 4]{Freue2007} provides a table of quantiles $z_{\alpha/2}$ for sample sizes $n = 5, 10, \dots, 40$ and $\alpha = 0.1, 0.05, 0.01$.

\subsection{Asymptotic efficiency}\label{subsec:simulation-AE}

We consider the quantity $n E\left[\left(\hat{\theta}_n - \theta\right)^2\right]$ appearing in Theorem \ref{thm:AE}. 
The column labeled $\infty$ reports the theoretical limit $\dfrac{m+1}{m(2m-1)}$ given by Theorem~\ref{thm:AE}. 

\begin{table}[H]
\centering
\begin{tabular}{|c|ccccc|c|}
\hline
 $m \backslash n$ & 10 & 50 & 100 & 500 & 1000 & $\infty$\\
\hline
    0.6 & NA     & 25.756 & 16.108 & 13.756 & 13.537 &13.333 \\
    0.7 & NA     & 7.231 & 6.561 & 6.162 & 6.114 & 6.071 \\
    0.8 & 8.979 & 4.154 & 3.933 & 3.786 & 3.769 & 3.750 \\
    0.9 & 4.511 & 2.831 & 2.730 & 2.657 & 2.648 & 2.639 \\
    1.0 & 2.907 & 2.110 & 2.053 & 2.011 & 2.005 & 2.000\\
    1.1 & 2.106 & 1.660 & 1.623 & 1.598 & 1.595 & 1.591\\
    1.2 & 1.631 & 1.356 & 1.332 & 1.314 & 1.311 & 1.310\\
    1.3 & 1.322 & 1.138 & 1.122 & 1.108 & 1.107 & 1.106\\
    1.4 & 1.105 & 0.976 & 0.964 & 0.955 & 0.953 & 0.952 \\
    1.5 & 0.946 & 0.851 & 0.842 & 0.835 & 0.834 & 0.833 \\
\hline
\end{tabular}
\caption{Simulated values of $n\,\mathbb{E}\!\left[\left(\hat\theta_n-\theta\right)^2\right]$. 
}\label{tab:AE}
\end{table}

In the case  $m=1$, \cite[Table 2]{Freue2007} provides numerical values for $n = 5,6, \dots, 14, 15, 20, 25, \dots, 35, 40$. 
These results are consistent with the numerical results in \cite[Table 2]{AOO2022AISM} for the joint estimation of the location and scale.  \\

{\it Acknowledgements.} \ \ The author would like to express his gratitude to the referees for their helpful comments and suggestions.\\

\bibliographystyle{plain}
\bibliography{locMLE-Cauchy}

\end{document}